\author[R. G. Bettiol]{Renato G. Bettiol}
\author[P. Piccione]{Paolo Piccione}
\author[G. Siciliano]{Gaetano Siciliano}
\address{
\begin{tabular}{lll}
University of Notre Dame & &Universidade de S\~ao Paulo \\
Department of Mathematics & & Departamento de Matem\'atica \\
255 Hurley Building & & Rua do Mat\~ao, 1010 \\
Notre Dame, IN, 46556-4618, USA & & S\~ao Paulo, SP, 05508-090, Brazil\\
\emph{E-mail address}: {\tt rbettiol@nd.edu} & & \emph{E-mail address}: {\tt piccione@ime.usp.br}\\
 && \emph{E-mail address}: {\tt gaetano.siciliano@gmail.com}
\end{tabular}
}
\subjclass[2010]{58E07, 58E09, 46T05, 58D19, 53A10}
\date{July 31st, 2013} 
\theoremstyle{definition}\newtheorem*{defin*}{Definition}
\theoremstyle{plain}\newtheorem{teo}{Theorem}
\theoremstyle{plain}\newtheorem{prop}[teo]{Proposition}
\theoremstyle{plain}\newtheorem{lem}[teo]{Lemma}
\theoremstyle{plain}\newtheorem{cor}[teo]{Corollary}
\theoremstyle{definition}\newtheorem{defin}[teo]{Definition}
\theoremstyle{remark}\newtheorem{rem}[teo]{Remark}
\theoremstyle{plain}\newtheorem{example}[teo]{Example}
\newcommand{\dd}{\mathrm{d}}
\newcommand{\ev}{\mathrm{ev}}
\newcommand{\Iso}{\mathrm{Iso}}
\newcommand{\Hor}{\mathrm{Hor}}
\newcommand{\vol}{\mathrm{vol}}
\newcommand{\Vol}{\mathrm{Vol}}
\newcommand{\Area}{\mathrm{Area}}
\newcommand{\Diff}{\mathrm{Diff}}
\numberwithin{equation}{section}
\numberwithin{teo}{section}
\title[Equivariant bifurcation]{Equivariant bifurcation\\ in geometric variational problems}
\thanks{The first named author is supported by NSF, grant DMS-0941615, USA.
The second named author is partially supported by Fapesp and by CNPq, Brazil.
The third named author is supported by Fapesp, grant n.\ 2011/01081-9,  
CNPq grant 302523/2012-0, Brazil, and by M.I.U.R. - P.R.I.N. ``Metodi variazionali e topologici nello studio di fenomeni non lineari'', Italy.
}
\begin{document}

\begin{abstract}
We prove an extension of a celebrated equivariant bifurcation result of J. Smoller and A. Wasserman~\cite{SmoWas}, in an abstract framework for geometric variational problems. With this purpose, we prove a slice theorem for continuous affine actions of a (finite-dimensional) Lie group on Banach manifolds. As an application, we discuss equivariant bifurcation of constant mean curvature hypersurfaces, providing a few concrete examples and counter-examples.
\end{abstract}

\maketitle
\tableofcontents

\vspace{-2.5em}

\begin{section}{Introduction}

Most geometric variational problems are invariant under a symmetry group, in the sense that the geometric objects of interest are critical points of a functional invariant under the action of a Lie group. For example, the rotation action of $\mathds S^1$ on the space of loops of a Riemannian manifold $M$ leaves invariant the energy functional (whose critical points are closed geodesics on $M$). As a more interesting example, the action of the isometry group of a Riemannian manifold $\overline M$ leaves invariant the area functional (whose critical points with constrained volume are constant mean curvature (CMC) submanifolds $M\hookrightarrow\overline M$). The aim of this paper is to develop an abstract \emph{equivariant} bifurcation theory for families of critical points of variational problems as the above, tailored for geometric applications. In a certain sense, this problem is complementary to our study of an Implicit Function Theorem for such variational problems, see~\cite{BPS1}, namely characterizing when it fails.

Equivariant bifurcation for a $1$-parameter family of gradient-like operators invariant under the action of a Lie group on a Banach space was pioneered by the work of J. Smoller and A. Wasserman~\cite{SmoWas}. They found sufficient conditions for the existence of a bifurcation instant in a $1$-parameter family of zeros of such a path of operators, when these zeroes are fixed points of the action. The sufficient condition is stated in terms of the induced isotropy representations on the negative eigenspace of the linearized operators. This result was then successfully used to obtain bifurcation of radial solutions to semilinear elliptic PDEs in a disk with homogeneous linear boundary conditions, among other similar applications.

Nevertheless, in applications to geometric variational problems, it is too restrictive to assume that the starting $1$-parameter family of solutions is formed only by \emph{fixed points} of the action. Typically, variational problems involving maps with values in Riemannian manifolds are invariant under the isometry group of the target manifold, which acts by left-composition. It often is a natural situation that the given family of critical points is only invariant under a smaller group of isometries, i.e., the orbits of such points may not consist of single points, although they may also have non trivial isotropy. It is also important to observe that, in many cases, the action of the symmetry group is not everywhere differentiable. For instance, the (left-composition) action of the isometry group of $\overline M$ on the space of $\mathcal C^k$ \emph{unparameterized embeddings} of a compact manifold $M$ into $\overline M$ is only continuous, and differentiable only at $\mathcal C^\infty$ embeddings, see \cite{AliPic10}. This is the action one has to consider when studying the CMC variational problem. Finally, it is also common to have only a \emph{local} action of a symmetry group (which is also the case in the CMC variational problem).

In the present paper, we take into account all of the above observations and extend the classic equivariant bifurcation result of J. Smoller and A. Wasserman~\cite{SmoWas} to this more general situation. Let us describe with more details our main abstract bifurcation results, Theorems~\ref{thm:Gbifurcation} and \ref{thm:Gbifurcation2}. Assume $\mathcal M$ is a Banach manifold endowed with a connection and $G$ is a compact Lie group acting\footnote{To simplify our discussion, we suppose here that the action of $G$ is globally defined, although the results described in the sequel also hold for the more general case of \emph{local actions}.} continuously by affine diffeomorphisms on $\mathcal M$. Let $\mathfrak f_\lambda\colon \mathcal M\to\mathds R$ be a family of smooth $G$-invariant functionals, parameterized by $\lambda\in[a,b]$, and $\lambda\mapsto x_\lambda$ be a curve of critical points in $\mathcal M$, i.e., $\mathrm d\mathfrak f_\lambda(x_\lambda)=0$, for all $\lambda$. Under the appropriate Fredholmness assumptions on the second derivative of $\mathfrak f_\lambda$ at $x_\lambda$, we prove that if the following conditions are satisfied, there exists equivariant bifurcation at some $\lambda_*\in\left]a,b\right[\,$:
\begin{itemize}
\item \emph{Constant isotropy}: the isotropy group $H$ of $x_\lambda$ is a \emph{nice group}\footnote{e.g., this is satisfied if $H$ is a closed subgroup of $G$ with less than $5$ connected components, see Example~\ref{ex:nice}.} (in the sense of \cite{SmoWas}) and independent of $\lambda$;
\item \emph{Equivariant nondegeneracy}: the kernel of the second derivatives $\mathrm d^2\mathfrak f_a(x_a)$ and $\mathrm d^2\mathfrak f_b(x_b)$ coincides with the tangent space to the $G$-orbit of $x_a$ and of $x_b$, respectively;
\item \emph{Jump of negative isotropy representation}: the linear representations of $H$ on the ``negative eigenspaces'' of $\mathrm d^2f_a(x_a)$ and $\mathrm d^2f_b(x_b)$ are not equivalent.
\end{itemize}
In other words, the above three conditions imply that there exists a sequence $(x_n)_n$ in $\mathcal M$ and a sequence $(\lambda_n)_n$ in $[a,b]$, with $x_n\to x_{\lambda_*}$ and $\lambda_n\to\lambda_*$ as $n\to\infty$ such that for all $n$, $\mathrm d\mathfrak f_{\lambda_n}(x_n)=0$ and the orbit $G\cdot x_n$ is disjoint from the orbit $G\cdot x_{\lambda_n}$, see also Definition~\ref{def:eqbif}. A particular case of the third condition above is when there is a change of the Morse index (the sum of dimensions of the negative eigenspaces) from $x_a$ to $x_b$. Clearly, having the same dimension is a necessary condition for two representations to be equivalent, so a jump of the Morse index also determines existence of equivariant bifurcation.

The key idea for the proof of the above results is the construction of slices for group actions, and the reduction of the variational problem to a given slice (where a nonlinear formulation of the classic result of J. Smoller and A. Wasserman~\cite{SmoWas} can now be applied). Although slices for continuous group actions exist in a general topological setting (see \cite{Bredon}), when using variational calculus, one needs a stronger notion of slice (with some differentiability properties). Typically, differentiable slices are constructed applying the exponential map to the normal space of an orbit. This does not work in the general case of actions on Banach manifolds, that may not admit a (complete) invariant inner product. Our central observation is that, in a Banach manifold setting, a similar construction can be performed using the exponential of any \emph{invariant connection}, which exists naturally in many interesting situations. This exponential is then applied to some invariant closed complement of the tangent space to a differentiable group orbit. Invariant closed complements always exist in the case of strongly continuous group actions on Banach spaces (see Lemma~\ref{thm:preparatory}). Thus, the core of the present paper consists in a description of the main properties of connections on infinite-dimensional Banach manifolds (or Banach vector bundles), and the construction of smooth slices for continuous affine (local) actions.

As an example of application of this theory, we obtain bifurcation results for families of CMC hypersurfaces, see Theorems~\ref{thm:MorseCMCbif} and \ref{thm:equivCMCbifurcation}. Those are then applied to concrete families of Clifford tori in round and Berger spheres, and of rotationally symmetric surfaces in $\mathds R^3$. In those cases, a few recent bifurcation results by the second named author and others are reobtained, see~\cite{AliPic11,KoiPalPic2011,LimLimPic2011}. Other bifurcation results obtained by the first and second named authors for geometric variational problems with symmetries with a similar framework can be found in \cite{bp1,bp2,bp3}.

Some natural questions arise regarding further generalizations, e.g., when one considers the case in which the isotropy group of $x_\lambda$ depends on the parameter $\lambda$, described in Example~\ref{thm:remvariablegroup}. This is a topic of current research by the authors, as well as the study of other geometric applications, e.g., to the variational problem of constant anisotropic mean curvature hypersurfaces.

The paper is organized as follows. Section~\ref{sec:connection} contains general facts about connections on (infinite-dimensional) Banach vector bundles. Special emphasis is given to Banach bundles of sections of finite-dimensional vector bundles, endowed with an affine connection, over differentiable manifolds. This is the case of main interest in applications. In this context, the two main results (Proposition~\ref{thm:invdiffeos} and Corollary~\ref{thm:corrightcomp}) are that the map of right-composition with diffeomorphisms of the base manifolds is affine, as well as the map of left-composition with an affine map. Section~\ref{sec:slices} deals with the question of existence of slices for group actions on infinite-dimensional Banach manifolds. The main result of this section, Theorem~\ref{thm:existenceofslicecompact}, gives the existence of a slice through a point $x$ for affine actions, under the assumption of compactness of the isotropy of $x$. The case of local group actions is also discussed, see Subsection~\ref{sub:localactions}. Section~\ref{sec:mainbif} contains the main equivariant bifurcation results (Theorems~\ref{thm:Gbifurcation} and \ref{thm:Gbifurcation2}), which generalize \cite[Thm~2.1]{SmoWas} and \cite[Thm~3.3]{SmoWas} respectively. Section~\ref{sec:geomappl} contains a geometric application of the two abstract bifurcation results in the context of CMC embeddings, which was the original motivation for the development
of the theory. Concrete examples of bifurcation of CMC embeddings recently discovered are briefly presented in the end of this section. Finally, Appendix~\ref{app:A} describes the basic framework for the used nonlinear formulation of the results of J. Smoller and A. Wasserman~\cite{SmoWas}.
\end{section}

\begin{section}{Connections on infinite-dimensional manifolds}
\label{sec:connection}
We start by studying the notion of connection on a Banach vector bundle.
Given a connection on a finite-dimensional vector bundle $\pi^E\colon E\to M$ and a smooth
manifold $D$ (possibly with boundary), we describe the construction of a naturally
associated connection on the bundle $\pi^\mathcal E\colon \mathcal E\to\mathcal M$, where
$\mathcal E=\mathcal C^k(D,E)$, $\mathcal M=\mathcal C^k(D,M)$ and $\pi^\mathcal E$ is the
left composition with $\pi^E$. This is characterized as the unique connection for
which the evaluation maps $\ev_p\colon \mathcal E\to E$ are affine.
We show the invariance of this connection by the right action of the diffeomorphism
group of $D$. When $E=TM$ and the connection on $TM$ is the Levi-Civita connection
of some semi-Riemannian metric tensor $g$ on $M$,  then the associated connection
on $\mathcal C^k(D,M)$ is also invariant by the left action of the isometry group of $g$.
A classic reference on these topics is \cite{Eliasson}.

\subsection{Banach vector bundles}
Let $\mathcal M$ be a smooth Banach manifold, and $\pi^\mathcal E\colon \mathcal E\to\mathcal M$ be a smooth Banach
vector bundle on $\mathcal M$. This means that $\mathcal E=\bigcup_{x\in\mathcal M}\mathcal E_x$, with $\mathcal E_x=\pi^{-1}(x)$,
is a collection of vector spaces, and that it is given an \emph{atlas of compatible trivializations of $\mathcal E$}.
Given a Banach space $\mathcal E_0$, write
\[\mathrm{Fr}_{\mathcal E_0}(\mathcal E):=\bigcup_{x\in\mathcal M}\mathrm{Iso}(\mathcal E_0,\mathcal E_x),\]
where $\mathrm{Iso}(\mathcal E_0,\mathcal E_x)$ is the set of Banach space isomorphisms (bi-Lipschitz linear isomorphisms)
from $\mathcal E_0$ to $\mathcal E_x$.
For a vector bundle $\pi^\mathcal E\colon \mathcal E\to\mathcal M$ with fibers of finite dimension $n$, we will write $\mathrm{Fr}(\mathcal E)$ for $\mathrm{Fr}_{\mathds R^n}(\mathcal E)$.
A local trivialization of $\pi^\mathcal E\colon \mathcal E\to\mathcal M$ with domain
$\mathcal U\subset\mathcal M$ and typical fiber $\mathcal E_0$ is a local
section $s\colon \mathcal U\to\mathrm{Fr}_{\mathcal E_0}(\mathcal E)$. Two local trivializations $s_i$, with domain $\mathcal U_i\subset\mathcal M$
and typical fibers $\mathcal E_i$, $i=1,2$, are compatible if the transition map $s_2^{-1}s_1\colon \mathcal U_1\cap\mathcal U_2\to\mathrm{Iso}(\mathcal E_1,\mathcal E_2)$
is smooth. A collection $(\mathcal U_i,s_i,\mathcal E_i)_{i\in I}$ of local trivializations of $\mathcal E$ is an atlas if
the domains $\mathcal U_i$ cover $\mathcal M$. For details on the structure of such Banach vector bundles, see~\cite{PicTau2006}.

\subsection{Connections on Banach vector bundles}
\label{sub:connectionsBanachbundles}
A \emph{connection} on the Banach vector bundle $\pi^\mathcal E\colon \mathcal E\to\mathcal M$ is a smooth
map $P^\mathcal E\colon T\mathcal E\to\mathcal E$ such that:
\begin{itemize}
\item[(a)] for all $x\in\mathcal M$ and $e\in\mathcal E_x$, the restriction $P^\mathcal E_e=P^\mathcal E|_{T_e\mathcal E}$ is a linear map with values in $\mathcal E_x$;
\item[(b)] for any local trivialization $s\colon\mathcal U\to\mathrm{Fr}_{\mathcal E_0}(\mathcal E)$, there exists a smooth map
$$\mathcal U\ni x\longmapsto\Gamma_x\in\mathrm{Bil}(T_x\mathcal M\times\mathcal E_x,\mathcal E_x)$$ such that, denoting
by $\widetilde s\colon\mathcal E\vert_\mathcal U\to\mathcal E_0$ the map $\widetilde s(e)=s\big(\pi(e)\big)^{-1}(e)$, the following identity
holds for all $x\in\mathcal U$, $e\in\mathcal E_x$ and $\eta\in T_e\mathcal E$:
\[P^\mathcal E(\eta)=s(x)\big(\mathrm d\widetilde s_e(\eta)\big)+\Gamma_x\big(\mathrm d\pi^\mathcal E_e(\eta),e\big).\]
\end{itemize}
A standard argument shows that it suffices to have property (b) satisfied only for the set of local trivializations of an atlas.

A connection $\mathcal P^\mathcal E$ defines a distribution $\mathrm{Hor}(P^\mathcal E)$ on the total space $\mathcal E$,
called the \emph{horizontal distribution}, given by $\mathrm{Hor}(P^\mathcal E)_e=\mathrm{Ker}(P^\mathcal E_e)$. A vector $v\in T_e\mathcal E$ will be called \emph{horizontal} if it belongs to $\Hor(P^\mathcal E)_e$.

\subsection{Connections and exponential maps on Banach manifolds}
By a \emph{manifold with connection}, we mean a Banach
manifold $\mathcal M$ with a connection on its tangent bundle $\pi\colon T\mathcal M\to\mathcal M$.
If $P$ is a connection on $T\mathcal M$, one has a vector field $X(P)$ on $T\mathcal M$, called
\emph{geodesic field}, defined by the following: for $x\in\mathcal M$ and $v\in T_x\mathcal M$,
$X(P)_v$ is the unique horizontal vector in $T_v(T\mathcal M)$  that projects onto $v\in T_x\mathcal M$
(by the differential $\mathrm d\pi_v$). A curve $\gamma\colon I\to\mathcal M$ is a \emph{$P$-geodesic} if it is the projection of an integral curve $\Gamma\colon I\to T\mathcal M$ of $X(P)$.
If $\mathcal M$ is a manifold with connection $P$, then one has an exponential map
$$\exp^P\colon \mathrm{Dom}(\exp^P)\subset T\mathcal M\longrightarrow\mathcal M,$$
defined on an open subset $\mathrm{Dom}(\exp^P)\subset T\mathcal M$ containing the zero section, with
properties totally analogous to the exponential map of a connection on a finite-dimensional manifold.
In particular, for all $x\in\mathcal M$, $\mathcal A_x=\mathrm{Dom}(\exp^P)\cap T_x\mathcal M$ is a star-shaped open neighborhood of $0$ in $T_x\mathcal M$, and, denoting by $\exp^P_x$ the restriction of $\exp^P$ to $\mathcal A_x$, one has $\mathrm d\exp^P_x(0)=\mathrm{Id}$. In particular, $\exp^P_x$ is a diffeomorphism from an open neighborhood of $0$ in $T_x\mathcal M$ onto an open neighborhood of $x$ in $\mathcal M$.

\subsection{Banach bundles of sections of a finite-dimensional vector bundle}
\label{sub:naturalconnection}
We now describe an important example of the abstract setting of Subsection~\ref{sub:connectionsBanachbundles}.
Consider a vector bundle $\pi^E\colon E\to M$ over a finite-dimensional differentiable manifold $M$, and let $P^E\colon TE\to E$ be a connection in $E$. Let $D$ be a compact differentiable manifold (possibly with boundary), and for some $k\ge2$, set $\mathcal M=\mathcal C^k(D,M)$ and $\mathcal E=\mathcal C^k(D,E)$. There exists a natural map $\pi^\mathcal E\colon \mathcal E\to\mathcal M$, namely the map $(\pi^E)_*$ of left-composition with $\pi^E$. The sets $\mathcal M$ and $\mathcal E$ admit a natural structure of Banach manifold, making $\pi^\mathcal E\colon\mathcal E\to\mathcal M$ an infinite-dimensional Banach vector bundle. More precisely, for $f\in\mathcal M$, the fiber $\mathcal E_f$ is the Banach space of $\mathcal C^k$ sections of the pull-back bundle $f^*(E)$, also called sections of $E$ \emph{along $f$},
$$\mathcal E_f=\big\{F\in\mathcal C^k(D,E):F(x)\in E_{f(x)} \mbox{ for all } x\in D\big\}.$$
There is also a natural identification of the tangent bundle of $\mathcal E$ as $$T\mathcal E\cong\mathcal C^k(D,TE),$$ and if $f$ is in $\mathcal M$ (which can be thought of as the zero section of $\mathcal E$), there is a canonical splitting $T_f\mathcal E\cong T_f\mathcal M\oplus\mathcal E_f$ in horizontal and vertical parts respectively. The horizontal subspace of $T_f\mathcal E$ in this particular case is $T_f\mathcal M\cong\mathcal C^k(D,TM)$. To have a notion of horizontal subspace at the tangent space to $\mathcal E$ at points outside the zero section, we need a connection on this Banach vector bundle.

A connection $P^\mathcal E\colon T\mathcal E\to\mathcal E$ can be defined as being the map $(P^E)_*$ of left-composition with $P^E$. Let us show that this satisfies the axioms (a) and (b) described in Subsection~\ref{sub:connectionsBanachbundles}.

First, given $f\in\mathcal M$ and $F\in\mathcal E_f$, we have that the restriction $P_F^\mathcal E$ of $P^\mathcal E$ to $T_F\mathcal E$ maps a section $\eta$ of $F^*(TE)$ (i.e., an element of $T_F\mathcal E$) to the section $P^E\circ\eta$ of $f^*E$ (i.e., an element of $\mathcal E_f$), see the commutative diagram below. This map $P_F^\mathcal E$ is clearly linear, since it is given by left-composition with $P^E$, proving that axiom (a) holds.
\begin{equation*}
\xymatrix{& TE\ar[d]^{P^E} \\ & E\ar[d]^{\pi^E} \\ D\ar[r]^f\ar[ru]^F\ar@(u,l)[ruu]^\eta  & M}
\end{equation*}

Second, we observe that an atlas of trivializations of $\pi^\mathcal E\colon \mathcal E\to\mathcal M$ can be constructed
using smooth maps $s\colon\mathrm{Dom}(s)\subset D\times M\to\mathrm{Fr}(E)$ such that $\pi^E\circ s(p,x)=x$ for all $(p,x)\in\mathrm{Dom}(s)$,
and such that $s(p,\cdot)$ is a local trivialization of $\pi^E\colon E\to M$.
Once such a map $s$ is given, a trivialization $\mathfrak s\colon \mathrm{Dom}(\mathfrak s)\subset\mathcal C^k(D,M)\to\mathrm{Fr}_{\mathcal E_0}\big(\mathcal C^k(D,E)\big)$,
with $\mathcal E_0=\mathcal C^k(D,\mathds R^n)$,
is defined by setting, for all $x\in\mathrm{Dom}(\mathfrak s)=\big\{x\in\mathcal C^k(D,M):\mathrm{Gr}(x)\subset\mathrm{Dom}(s)\big\}$,\footnote{$\mathrm{Gr}(x)$ denotes the graph of $x\in\mathcal C^k(D,E)$.}
\begin{eqnarray*}
&\mathfrak s(x)\colon \mathcal C^k(D,\mathds R^n)\longrightarrow\mathcal C^k(D,E), & \\
&\mathfrak s(x)(v)_p=s\big(p,x(p)\big)v(p),&
\end{eqnarray*}
for all $v\in\mathcal C^k(D,\mathds R^n)$ and $p\in D$. Given $p\in D$, $F\in\mathcal E$ and $\eta\in T_e\mathcal E$, then:
\begin{multline*}
P^\mathcal E_F(\eta)(p)=P^E_{F(p)}\big(\eta(p)\big)\\=s\big(p,x(p)\big)\big[\mathrm d\widetilde s_p\big(F(p)\big)\eta(p)\big]+
\Gamma_{x(p)}^{P^E}\big(\mathrm d\pi^E_{F(p)}\big(\eta(p),F(p)\big)\big),
\end{multline*}
which says that the Christoffel symbol $\widetilde\Gamma$ of $P^\mathcal E$ associated to the trivialization
$\mathfrak s$ is given by:
\begin{eqnarray}\label{eq:tildeGamma}
&\widetilde\Gamma_x\colon\mathcal C^k(D,TM;x)\times\mathcal C^k(D,E;x)\longrightarrow\mathcal C^k(D,E;x)& \nonumber \\
&\widetilde\Gamma_x(v,e)(p)=\Gamma^P_{x(p)}\big(v(p),e(p)\big).&
\end{eqnarray}
Here, $\mathcal C^k(D,TM;x)$ and $\mathcal C^k(D,E;x)$ respectively denote the spaces of $\mathcal C^k$ sections of the pull-back bundles $x^*(TM)$ and $x^*(E)$.

An interesting particular case of the above construction is when $E=TM$ is the tangent bundle of $M$.
Recall that $D$ is a smooth manifold (possibly with boundary), $M$ is a manifold whose tangent bundle $TM$ has a
connection $P^{TM}$ and the Banach vector bundle $\mathcal E=\mathcal C^k(D,TM)$ is the tangent bundle of the Banach manifold $\mathcal M=\mathcal C^k(D,M)$, under the identification $$\mathcal E=\mathcal C^k(D,TM)\cong T\mathcal C^k(D,M)=T\mathcal M.$$ Endowing $T\mathcal M$ with the naturally induced connection $P^{T\mathcal M}$ described above, the $P^{T\mathcal M}$-geodesics in $\mathcal M$ are smooth curves $s\mapsto x_s\in\mathcal C^k(D,M)$ such that, for all $p\in D$, the curve $s\mapsto x_s(p)\in M$ is a $P^{TM}$-geodesic in $M$. This is a manifestation of the fact we will see next that the induced connection $P^{T\mathcal M}$ is characterized by every evaluation map $\ev_p\colon \mathcal M\to M$ being \emph{affine}, see Proposition~\ref{thm:univproperty}.

\subsection{Affine maps}
Let us now consider two Banach vector bundles $\pi^\mathcal E\colon \mathcal E\to\mathcal M$ and $\pi^{\mathcal E'}\colon \mathcal E'\to\mathcal M'$
endowed with connections $P^\mathcal E$ and $P^{\mathcal E'}$ respectively. Let $f\colon \mathcal M\to\mathcal M'$ be a smooth map and $T\colon \mathcal E\to\mathcal E'$ a smooth Banach bundle morphism for which the following diagram commutes.
\[\xymatrix{\mathcal E\ar[r]^T\ar[d]_{\pi^\mathcal E}&\mathcal E'\ar[d]^{\pi^{\mathcal E'}}\\ \mathcal M\ar[r]_f&\mathcal M'.}\]
\begin{defin}\label{thm:defaffine}
$T$ is said to be \emph{affine} if the following diagram commutes
\[\xymatrix{T\mathcal E\ar[r]^{\mathrm dT}\ar[d]_{P^\mathcal E}&T\mathcal E'\ar[d]^{P^{\mathcal E'}}\\\mathcal E\ar[r]_T&\mathcal E'.}\]
It is easy to see that $T$ is affine if and only if $\mathrm dT$ maps horizontal spaces to horizontal spaces.
\end{defin}

\begin{defin}\label{def:affine}
If $\mathcal M$ and $\mathcal M'$ are Banach manifolds endowed with connections $P$ and $P'$, a smooth map $f\colon \mathcal M\to\mathcal M'$ is \emph{affine} if $\mathrm df\colon T\mathcal M\to T\mathcal M'$ is affine.
\end{defin}

\begin{example}
\label{exa:naturalconnection}
Consider a finite-dimensional vector bundle $\pi^E\colon E\to M$ endowed with a connection $P^E$, let $D$ be a smooth manifold (possibly with
boundary) and consider the connection $P^\mathcal E$ defined on $\mathcal E=\mathcal C^k(D,E)$, as in Subsection~\ref{sub:naturalconnection}.
For all $p\in D$ denote by $\mathrm{ev}_p$ the evaluation at $p$ maps $\mathcal C^k(D,E)\to E$ and $\mathcal C^k(D,M)\to M$. Clearly, the following diagram commutes
\[\xymatrix{\mathcal E=\mathcal C^k(D,E)\ar[rr]^{\pi^\mathcal E}\ar[d]_{\mathrm{ev}_p}&&\mathcal C^k(D,M)=\mathcal M\ar[d]^{\mathrm{ev}_p}\\ E\ar[rr]_{\pi^E}&&M},\]
and it is easy to check that $\mathrm{ev}_p$ is affine.
Conversely, we now prove that this property characterizes the natural connection on $\mathcal E$ constructed in Subsection~\ref{sub:naturalconnection}.

\begin{prop}[Universal property of the natural connection]
\label{thm:univproperty}
The natural connection defined on $\pi^\mathcal E\colon \mathcal E\to\mathcal M$ as above is the unique connection for which $\mathrm{ev}_p$ is an affine map, for all $p\in D$.
\end{prop}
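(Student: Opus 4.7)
The existence half of the statement---that the natural connection $P^{\mathcal E}$ constructed in Subsection~\ref{sub:naturalconnection} actually makes every $\mathrm{ev}_p$ affine---is already asserted in Example~\ref{exa:naturalconnection} and follows immediately from the defining formula $P^{\mathcal E}=(P^E)_*$, so the real content of the proposition is uniqueness. My plan is a short diagram chase through Definition~\ref{thm:defaffine}, exploiting the fact that the evaluation map is pointwise by construction.

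Let $Q^{\mathcal E}\colon T\mathcal E\to\mathcal E$ be any connection on $\pi^{\mathcal E}\colon\mathcal E\to\mathcal M$ for which every $\mathrm{ev}_p\colon\mathcal E\to E$ (covering $\mathrm{ev}_p\colon\mathcal M\to M$) is affine. Fix an arbitrary $F\in\mathcal E$, set $x=\pi^{\mathcal E}(F)\in\mathcal M$, and pick any $\eta\in T_F\mathcal E$. Applying Definition~\ref{thm:defaffine} to the morphism $\mathrm{ev}_p$ with respect to the pair of connections $(Q^{\mathcal E},P^E)$ gives
$$P^E_{F(p)}\bigl(\mathrm d(\mathrm{ev}_p)_F(\eta)\bigr)=\mathrm{ev}_p\bigl(Q^{\mathcal E}_F(\eta)\bigr)=Q^{\mathcal E}_F(\eta)(p),$$
where the second equality uses that $\mathrm{ev}_p\colon\mathcal E\to E$ sends a section $G\in\mathcal E_y$ to its value $G(p)\in E_{y(p)}$, so that, restricted to any fibre, it really is pointwise evaluation. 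The identical computation with the natural connection in place of $Q^{\mathcal E}$ (which is affine for $\mathrm{ev}_p$ by the existence half) yields
$$P^E_{F(p)}\bigl(\mathrm d(\mathrm{ev}_p)_F(\eta)\bigr)=P^{\mathcal E}_F(\eta)(p).$$

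Comparing these two identities, $Q^{\mathcal E}_F(\eta)(p)=P^{\mathcal E}_F(\eta)(p)$ for every $p\in D$. Since $\mathcal E_x$ consists of $\mathcal C^k$ sections of the pull-back bundle $x^*E$, and such sections are determined by their pointwise values on $D$, this forces the equality $Q^{\mathcal E}_F(\eta)=P^{\mathcal E}_F(\eta)$ in the fibre $\mathcal E_x$; as $F\in\mathcal E$ and $\eta\in T_F\mathcal E$ were arbitrary, $Q^{\mathcal E}=P^{\mathcal E}$. The only point requiring care---and the only place where I expect any friction at all in writing out a full proof---is to unwind cleanly that the bundle morphism $\mathrm{ev}_p\colon\mathcal E\to E$ really does restrict to literal pointwise evaluation of sections, so that the composition $\mathrm{ev}_p\circ Q^{\mathcal E}$ appearing in the affineness diagram translates precisely into the scalar evaluation $Q^{\mathcal E}_F(\eta)(p)\in E_{x(p)}$; once this identification is made explicit the uniqueness reduces to the one-line comparison above.
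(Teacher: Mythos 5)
Your proof is correct and does the same thing the paper does, just phrased in a coordinate-free way: the paper's one-line proof ``It follows readily from \eqref{eq:tildeGamma}'' is exactly the observation that the Christoffel symbols of $P^\mathcal E$ are evaluated pointwise, which is the local-coordinate form of your statement that affineness of every $\mathrm{ev}_p$ pins down $Q^\mathcal E_F(\eta)(p)$ for all $p$, and hence, because sections are determined by their values, pins down $Q^\mathcal E$. Your explicit diagram chase is arguably cleaner than invoking the Christoffel formula, but the underlying idea---the connection is forced pointwise by the affine condition---is identical.
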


\begin{proof}
It follows readily from \eqref{eq:tildeGamma}. The condition that $\mathrm{ev}_p$ is affine is the commutativity of the following diagram:
\[\xymatrix{\mathcal C^k(D,TE)\ar[rr]^-{\mathrm d(\mathrm{ev}_p)}\ar[d]_{P^\mathcal E}&&TE\ar[d]^{P^E}\\\mathcal C^k(D,E)\ar[rr]_-{\mathrm{ev}_p}&&E.}\]
\end{proof}
\end{example}

\subsection{Invariance}
We conclude this section with a few results on affine maps.

\begin{prop}
\label{thm:invdiffeos}
Let $\pi^E\colon E\to M$ be a vector bundle with a connection $P^E$, let $D$ and $D'$ be manifolds (possibly with boundary), and set $\mathcal E=\mathcal C^k(D,E)$, $\mathcal E'=\mathcal C^k(D',E)$,  and $\mathcal M=\mathcal C^k(D,M)$. Let $\pi^\mathcal E\colon \mathcal E\to\mathcal M$ and $\pi^{\mathcal E'}\colon \mathcal E'\to\mathcal M$ be endowed with the associated connections $P^\mathcal E$ and $P^{\mathcal E'}$. If $\phi\colon D\to D'$ is a diffeomorphism of class $\mathcal C^k$, then the map
\[\phi^*\colon \mathcal E\longrightarrow\mathcal E'\]
of right-composition with $\phi$ is affine.
\end{prop}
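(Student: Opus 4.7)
The plan is to exploit the universal property from Proposition~\ref{thm:univproperty}, which says that the natural connection on a bundle of sections is the unique connection for which every evaluation map is affine. The right-composition map $\phi^*$ is designed precisely to intertwine evaluation maps: for each point $q$ in the target, one has an identity of the form
\[
\ev_q\circ\phi^*=\ev_{\phi^{-1}(q)},
\]
as maps from $\mathcal E$ to $E$ (and analogously at the level of the base manifolds $\mathcal M$ and $\mathcal M'$). Both sides appear in a context where the relevant evaluation maps are already known to be affine by Proposition~\ref{thm:univproperty}, which is the whole leverage we need.

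To implement this, I would fix $F\in\mathcal E$ and $\eta\in T_F\mathcal E$ and prove the defining identity
\[
P^{\mathcal E'}\bigl(\mathrm d(\phi^*)_F(\eta)\bigr)=\phi^*\bigl(P^{\mathcal E}_F(\eta)\bigr),
\]
viewed as an equality in the fiber $\mathcal E'_{\phi^*(F)}$, which consists of $\mathcal C^k$ sections of a pull-back bundle over $D'$. Two such sections coincide as soon as they agree pointwise, so it suffices to evaluate both sides at an arbitrary $q\in D'$. On the left, the affineness of $\ev_q\colon\mathcal E'\to E$ (Proposition~\ref{thm:univproperty}) gives
\[
\ev_q\bigl(P^{\mathcal E'}(\mathrm d(\phi^*)_F(\eta))\bigr)=P^E\bigl(\mathrm d(\ev_q\circ\phi^*)_F(\eta)\bigr)=P^E\bigl(\mathrm d\ev_{\phi^{-1}(q)}(\eta)\bigr).
\]
On the right, the definition of $\phi^*$ together with affineness of $\ev_{\phi^{-1}(q)}\colon\mathcal E\to E$ gives
\[
\ev_q\bigl(\phi^*(P^{\mathcal E}_F(\eta))\bigr)=\ev_{\phi^{-1}(q)}\bigl(P^{\mathcal E}_F(\eta)\bigr)=P^E\bigl(\mathrm d\ev_{\phi^{-1}(q)}(\eta)\bigr).
\]
The two sides agree, which is the statement that $\phi^*$ is affine in the sense of Definition~\ref{thm:defaffine}.

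The only subtle step is the pointwise-determination argument, but this is immediate since every fiber of $\mathcal E'$ is a space of $\mathcal C^k$ sections of a finite-dimensional bundle over $D'$, and such sections are determined by their values. An alternative route would be to verify the identity directly via the Christoffel symbol formula \eqref{eq:tildeGamma}, noting that the Christoffel symbol at $x\in\mathcal M$ and at $x\circ\phi^{-1}\in\mathcal M'$ differ only by a reparameterization of the base; however, the universal-property argument above avoids this bookkeeping entirely and naturally prepares the ground for the analogous Corollary on left-composition with an affine map, where the same strategy applies with $\ev_{\phi^{-1}(q)}$ replaced by a composition with an affine target map.
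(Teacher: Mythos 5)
Your proof is correct and follows essentially the same route as the paper, which simply invokes the universal property of the natural connection (Proposition~\ref{thm:univproperty}); you have supplied the details by expressing the defining commutative diagram pointwise via evaluation maps, using the intertwining identity $\ev_q\circ\phi^*=\ev_{\phi^{-1}(q)}$ together with the affineness of the $\ev$'s. The pointwise-determination step is sound since each fiber of $\mathcal E'$ consists of $\mathcal C^k$ sections of a finite-rank bundle over $D'$, so nothing further is needed.
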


\begin{proof}
It follows from the universal property of the natural connection, Proposition~\ref{thm:univproperty} (or directly from the definition).
\end{proof}

\begin{prop}\label{estens}
Let $\pi^E\colon E\to M$ and $\pi^{E'}\colon E'\to M'$ be vector bundles endowed with connections $P^E$ and $P^{E'}$ respectively, $D$ a smooth manifold (possibly with boundary). Set $\mathcal M=\mathcal C^k(D,M)$, $\mathcal M'=\mathcal C^k(D',M')$ and let $\mathcal E=\mathcal C^k(D,E)$, $\mathcal E'=\mathcal C^k(D,E')$ be endowed with the natural connections.

\begin{equation*}
\mbox{If }\;\vcenter{\vbox{\xymatrix{E\ar[r]^T\ar[d]&E'\ar[d]\\ M\ar[r]_f&M'} }}\;\mbox{ is affine, then } \;\vcenter{\vbox{\xymatrix{\mathcal E\ar[r]^{T_*}\ar[d]&\mathcal E'\ar[d]\\ \mathcal M\ar[r]_{f_*}&\mathcal M'}}} \;\mbox{ is affine.}
\end{equation*}
\end{prop}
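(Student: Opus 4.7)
The plan is to reduce the statement to the universal property of the natural connection proved in Proposition~\ref{thm:univproperty}: the natural connection on $\mathcal{E}$ (resp.\ $\mathcal{E}'$) is characterized as the unique one for which every evaluation map $\ev_p\colon \mathcal{E}\to E$ (resp.\ $\ev_p\colon \mathcal{E}'\to E'$) is affine. The whole argument then rests on the two tautological identities
\[
\ev_p\circ T_* = T\circ \ev_p,\qquad \ev_p\circ f_* = f\circ \ev_p,
\]
valid for every $p\in D$, and on the fact that the composition of two affine maps is affine (since affinity amounts to the differential sending horizontal subspaces to horizontal subspaces, which is preserved under composition).

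First I would note that $T_*$ is a smooth morphism of Banach vector bundles covering $f_*$, so the diagram in the statement makes sense. Next, using the hypothesis that $T\colon E\to E'$ is affine together with Proposition~\ref{thm:univproperty} applied to $\ev_p\colon\mathcal{E}\to E$, I conclude that $T\circ\ev_p$ is affine; by the first identity above, this says that $\ev_p\circ T_*\colon\mathcal{E}\to E'$ is affine for every $p\in D$. Now for any $\xi\in T\mathcal{E}$ and any $p\in D$, I would chain four steps:
\begin{align*}
\ev_p\bigl(P^{\mathcal{E}'}(\dd T_*\,\xi)\bigr)
&= P^{E'}\bigl(\dd\ev_p\,(\dd T_*\,\xi)\bigr) \\
&= P^{E'}\bigl(\dd(\ev_p\circ T_*)\,\xi\bigr) \\
&= (\ev_p\circ T_*)\bigl(P^{\mathcal{E}}(\xi)\bigr) \\
&= \ev_p\bigl(T_*(P^{\mathcal{E}}(\xi))\bigr),
\end{align*}
where the first equality uses the affinity of $\ev_p\colon\mathcal{E}'\to E'$, the second is the chain rule, the third is the affinity of $\ev_p\circ T_*$ just established, and the fourth is the first identity above read backwards. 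Since two elements of $\mathcal{E}'$ coincide if and only if they agree at every $p\in D$, this gives $P^{\mathcal{E}'}\circ\dd T_* = T_*\circ P^{\mathcal{E}}$, which is exactly the affinity of $T_*$.

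There is no genuine obstacle to overcome here: once Proposition~\ref{thm:univproperty} is available, the argument is essentially a diagram chase, and the only delicate bookkeeping is the direction in which one uses $\ev_p\circ T_* = T\circ\ev_p$ (once to import the affinity of $T$, once to export the conclusion to $T_*$). A direct alternative would be to compute the Christoffel symbol of $P^{\mathcal{E}'}$ in a trivialization of the form described in Subsection~\ref{sub:naturalconnection}, pull it back via $T_*$ using formula~\eqref{eq:tildeGamma}, and match it pointwise with the pushforward of the Christoffel symbol of $P^{\mathcal{E}}$ under the affine map $T$; this works, but is a strictly longer and less illuminating route than the universal-property proof sketched above.
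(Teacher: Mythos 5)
Your proof is correct and close in spirit to the paper's, which is shorter and more functorial: the paper simply applies the left-composition functor $(\cdot)_*$ to the commutative square $P^{E'}\circ\dd T = T\circ P^{E}$, and reads off the conclusion from the defining identities $P^{\mathcal E}=(P^E)_*$, $P^{\mathcal E'}=(P^{E'})_*$ and $(\dd T)_*=\dd(T_*)$. Your argument is a pointwise unfolding of that same computation: instead of pushing forward by $(\cdot)_*$ you evaluate at each $p\in D$, replace the identity $P^{\mathcal E}=(P^E)_*$ by its equivalent reformulation that $\ev_p$ is affine for every $p$ (Example~\ref{exa:naturalconnection} / Proposition~\ref{thm:univproperty}), chain the affinity squares, and then use that the evaluations separate elements of $\mathcal C^k(D,E')$. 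Both routes are valid; the extra steps in yours (closure of affine maps under composition, the four-line chain, the separation-of-points step) are precisely the cost of trading the functorial identity for its evaluation-by-evaluation version. One small note on framing: you only use the existence half of Proposition~\ref{thm:univproperty} --- that the natural connection makes each $\ev_p$ affine --- and not the uniqueness half, so invoking the ``universal property'' slightly overclaims what is actually used in the argument.
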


\begin{proof}
Since the first diagram is affine, then the following diagram commutes
\[\xymatrix{TE\ar[r]^{\mathrm dT}\ar[d]_{P^E}&TE'\ar[d]^{P^{E'}}\\E\ar[r]_T&E'}\]
Taking left-composition with the above maps, and observing that $(\mathrm dT)_*=\mathrm d(T_*)$, we get the following commutative diagram, which proves the desired result.
\[\xymatrix{T\mathcal E\cong\mathcal C^k(D,TE)\ar[rr]^{(\mathrm dT)_*}\ar[d]_{(P^E)_*}&&\mathcal C^k(D,TE')\cong T\mathcal E'\ar[d]^{(P^{E'})_*}\\
\mathcal C^k(D,E)\ar[rr]_{T_*}&&\mathcal C^k(D,E')}\]
\end{proof}

\begin{cor}\label{thm:corrightcomp}
If $M$, $M'$ are manifolds with connections and $f\colon M\to M'$ is affine, then the map of left-composition
$f_*\colon\mathcal C^k(D,M)\to\mathcal C^k(D,M')$ is affine.
\end{cor}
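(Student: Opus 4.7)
The plan is very short because this really is a corollary: the work has already been done in Proposition \ref{estens}, and only the tangent-bundle bookkeeping remains. The strategy is to reduce the affineness of $f_*$ to the affineness of $(\mathrm{d}f)_*$, and then invoke Proposition \ref{estens} applied to the bundle morphism $\mathrm{d}f$.

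Concretely, I would proceed as follows. First, unwind Definition~\ref{def:affine}: to say that $f_*\colon \mathcal C^k(D,M)\to\mathcal C^k(D,M')$ is affine means that the bundle morphism $\mathrm{d}(f_*)\colon T\mathcal M\to T\mathcal M'$ is affine in the sense of Definition~\ref{thm:defaffine}, where $T\mathcal M$ and $T\mathcal M'$ carry the induced connections of Subsection~\ref{sub:naturalconnection}. Second, use the canonical identifications
\[T\mathcal C^k(D,M)\;\cong\;\mathcal C^k(D,TM),\qquad T\mathcal C^k(D,M')\;\cong\;\mathcal C^k(D,TM'),\]
under which, by a direct unraveling of the definition of the derivative of a left-composition map, one has $\mathrm{d}(f_*)=(\mathrm{d}f)_*$. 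Here it matters that these identifications intertwine the natural connection on $T\mathcal M$ (coming from $P^{TM}$) with the natural connection on the Banach bundle $\mathcal C^k(D,TM)\to\mathcal C^k(D,M)$; this is immediate from the construction in Subsection~\ref{sub:naturalconnection}.

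Third, since by hypothesis $f\colon M\to M'$ is affine, Definition~\ref{def:affine} tells us that $\mathrm{d}f\colon TM\to TM'$ is a bundle morphism over $f$ that is affine in the sense of Definition~\ref{thm:defaffine}. Proposition~\ref{estens}, applied with $E=TM$, $E'=TM'$ and $T=\mathrm{d}f$, then yields that the induced morphism
\[(\mathrm{d}f)_*\colon \mathcal C^k(D,TM)\longrightarrow \mathcal C^k(D,TM')\]
is affine over $f_*$. Combining this with the identification $\mathrm{d}(f_*)=(\mathrm{d}f)_*$ of the previous step gives that $\mathrm{d}(f_*)$ is affine, hence $f_*$ is affine, as required.

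No step is really hard; the only mildly delicate point is to be sure that the identification $T\mathcal C^k(D,M)\cong\mathcal C^k(D,TM)$ respects the connections (so that ``affine over $f_*$'' for $(\mathrm{d}f)_*$ really does translate into ``affine'' for $f_*$ in the sense of Definition~\ref{def:affine}). This is a consequence of the construction of the natural connection, so a single-line remark suffices.
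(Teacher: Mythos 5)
Your proposal is correct and follows exactly the route the paper intends: the corollary is stated with no written proof precisely because it is the special case $E=TM$, $E'=TM'$, $T=\mathrm{d}f$ of Proposition~\ref{estens}, combined with the identification $\mathrm{d}(f_*)=(\mathrm{d}f)_*$ under $T\mathcal C^k(D,M)\cong\mathcal C^k(D,TM)$. You have simply made explicit the bookkeeping the paper leaves implicit.
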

\end{section}

\begin{section}{Slices for continuous affine actions}
\label{sec:slices}

In this section, we construct \emph{slices} for continuous affine actions of a Lie group on a Banach manifold. Let us consider the following setup:
\begin{itemize}
\item[(a)] $\mathcal M$ is a smooth Banach manifold,
\item[(b)] $G$ is a Lie group acting continuously by diffeomorphisms on $\mathcal M$,
\item[(c)] $x\in\mathcal M$ is a point where the action of $G$ is differentiable.
\end{itemize}
When $\mathcal M$ is endowed with a connection, we will say that the group action is \emph{affine}
if $G$ acts by affine diffeomorphisms of $\mathcal M$.

Define the auxiliary maps, with $g\in G$, $y\in\mathcal M$,
\begin{equation}\label{eq:defbetax}
\begin{aligned}
\beta_x\colon G &\longrightarrow\mathcal M\quad&\phi_g\colon\mathcal M &\longrightarrow\mathcal M \\
g&\longmapsto g\cdot x \quad& y&\longmapsto g\cdot y.
\end{aligned}
\end{equation}
From assumption (b), $\phi_g$ is a diffeomorphism for each $g\in G$. Assumption (c) means that $\beta_x$ is differentiable. In particular, the $G$-orbit of $x$ is a submanifold of $\mathcal M$, whose tangent space at $x$ is given by the image of $\mathrm d\beta_x(1)\colon \mathfrak g\to T_x\mathcal M$, where $\mathfrak g$ is the Lie algebra of $G$. Let us denote by $G_x$ the isotropy (or stabilizer) of $x$, which is the closed subgroup of $G$ given by $G_x=\big\{g\in G:\phi_g(x)=x\big\}$.

\begin{defin}\label{thm:defslice}
A \emph{slice} for the action of $G$ on $\mathcal M$ at $x$ is a smooth submanifold $\mathcal S\subset\mathcal M$ containing $x$, such that
\begin{enumerate}
\item\label{itm:proprslices1} the tangent space $T_x\mathcal S\subset T_x\mathcal M$ is a closed complement to $\mathrm{Im}\big(\mathrm d\beta_x(1)\big)$, i.e.,
$T_x\mathcal M=\mathrm{Im}\big(\mathrm d\beta_x(1)\big)\oplus T_x\mathcal S$;
\item\label{itm:proprslices2} $G\cdot\mathcal S$ is a neighborhood of the orbit $G\cdot x$, i.e., the orbit of every $y\in\mathcal M$ sufficiently close to $x$ must intersect $\mathcal S$;
\item\label{itm:proprslices3} $\mathcal S$ is invariant under the isotropy group $G_x$.
\end{enumerate}
\end{defin}

We will prove the existence of slices for affine actions of compact Lie groups. Towards this goal, we need an auxiliary result on linear actions of compact groups.

\begin{lem}\label{thm:preparatory}
Let $G$ be a compact Hausdorff topological group with a \emph{strongly continuous}\footnote{i.e., the maps $G\ni g\mapsto g\cdot x\in\mathcal X$ are continuous for all $x\in\mathcal X$.} linear representation
on a Banach space $\mathcal X$. Then:
\begin{itemize}
\item[(a)] if $S\subset\mathcal X$ is a closed $G$-invariant complemented subspace of $\mathcal X$, then
$S$ admits a $G$-invariant closed complement;\smallskip
\item[(b)] the origin of $\mathcal X$ has a fundamental system of $G$-invariant neighborhoods.
\end{itemize}
\end{lem}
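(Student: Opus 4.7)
My plan is to deduce both parts from a standard averaging argument with respect to the normalized Haar measure $\mu$ on $G$. The key preliminary step, which I would carry out first, is to upgrade strong continuity plus compactness of $G$ to a uniform operator bound. For each fixed $x \in \mathcal X$, the orbit $\{g \cdot x : g \in G\}$ is the continuous image of a compact space, hence norm-bounded. The Banach--Steinhaus theorem then yields $C := \sup_{g \in G} \|g\|_{\mathrm{op}} < \infty$. From the uniform bound one further extracts \emph{joint} continuity of the action map $G \times \mathcal X \to \mathcal X$: if $g_n \to g$ and $y_n \to y$, the triangle inequality $\|g_n \cdot y_n - g \cdot y\| \le C\|y_n - y\| + \|g_n \cdot y - g \cdot y\|$ and strong continuity finish the claim.

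For part (a), I would start from a continuous linear projection $P \colon \mathcal X \to S$, which exists because $S$ is complemented, and form the averaged map
\[
\bar{P}(x) := \int_G g^{-1} P(g \cdot x)\, d\mu(g).
\]
By joint continuity, the integrand is a continuous $\mathcal X$-valued function on the compact space $G$, hence Bochner integrable, with norm controlled by $C^2 \|P\|\, \|x\|$. Three standard verifications then complete the argument: $\bar{P}$ takes values in $S$, because $S$ is closed and $G$-invariant and thus each $g^{-1} P(g\cdot x)$ lies in $S$; $\bar{P}|_S = \mathrm{id}_S$, because for $x \in S$ the integrand reduces to the constant $x$; and $\bar{P}$ is $G$-equivariant by the invariance of Haar measure under translation. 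Hence $\bar{P}$ is a continuous $G$-equivariant projection onto $S$, and $\ker(\bar{P})$ is the desired $G$-invariant closed complement.

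For part (b), the cleanest route is to introduce the equivalent $G$-invariant norm
\[
\|x\|_G := \sup_{g \in G} \|g \cdot x\|,
\]
which is well-defined and satisfies $\|x\| \le \|x\|_G \le C\,\|x\|$ by the preliminary uniform bound, so it generates the original topology of $\mathcal X$; moreover $\|h \cdot x\|_G = \|x\|_G$ for all $h \in G$ by a direct substitution. The $\|\cdot\|_G$-open balls around the origin are then open, $G$-invariant neighborhoods of $0$, and by the norm equivalence they form a fundamental system of such neighborhoods.

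The main technical obstacle is precisely the preliminary step: leveraging the mere \emph{strong} (i.e.\ pointwise) continuity hypothesis to obtain a uniform operator bound and joint continuity of the action, without which the Bochner integral defining $\bar{P}$ in (a) and the supremum defining $\|\cdot\|_G$ in (b) would not even be well-posed. Once this is in place, both statements reduce to routine bookkeeping on the compact group.
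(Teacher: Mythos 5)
Your proof is correct. The preliminary step (Banach--Steenhaus to get a uniform bound $C$, then joint continuity) is exactly the paper's first observation, and your averaging argument for part (a) is the same as the paper's, up to the harmless substitution $g\mapsto g^{-1}$ in the integrand (the paper writes $\widetilde P(x)=\int_G gPg^{-1}x\,\mathrm dg$; by inversion-invariance of the Haar measure the two operators coincide). For part (b), however, you take a genuinely different route: you build the equivalent $G$-invariant norm $\|x\|_G=\sup_{g\in G}\|g\cdot x\|$ and take its balls, whereas the paper argues purely topologically --- pull back a given neighborhood $V$ under the action map $G\times\mathcal X\to\mathcal X$, use compactness of $G$ (a tube lemma) to find $U$ with $G\times U$ in the preimage, and then take $\bigcup_{g\in G}gU\subset V$, which is open, $G$-invariant, and a neighborhood of $0$. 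Your norm argument is shorter and yields the stronger conclusion that the invariant neighborhoods can be taken \emph{convex}, but it is tied to the Banach structure; the paper's tube-lemma argument uses only joint continuity and so generalizes verbatim to continuous actions on arbitrary topological vector spaces (and is essentially the same argument the paper reuses in Lemma~\ref{thm:restrlocalaction} for actions on plain topological spaces). Both are sound.
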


\begin{proof}
First, observe that by the uniform boundedness principle, the linear operators on $\mathcal X$ associated to the action of elements $g\in G$ have norm bounded by a constant which is independent of $g$. By a simple argument, it follows that the action defines a continuous function $G\times\mathcal X\to\mathcal X$.
For part (a), let $P\colon\mathcal X\to\mathcal X$ be a projector (i.e., bounded linear idempotent) with image $S$. Define $\widetilde P\colon\mathcal X\to\mathcal X$ as the Bochner integral $\widetilde P(x)=\int_GgPg^{-1}x\,\mathrm dg$, where $\dd g$ is the Haar measure of $G$. It is easy to see that $\widetilde P$ is a well defined bounded linear operator on $\mathcal X$, with image contained in $S$. Furthermore, it
fixes the elements of $S$, and commutes with the $G$-action. It follows that $\widetilde P$ is also a projector with image $S$, and its kernel is the desired $G$-invariant closed complement to $S$.

As to part (b), let $V$ be an arbitrary neighborhood of the origin of $\mathcal X$. The inverse image of $V$ by the action $G\times\mathcal X\to\mathcal X$ is an open subset $Z$ of the product $G\times\mathcal X$ that contains $G\times\{0\}$. Since $G$ is compact, there exists a neighborhood $U$ of $0$ in $\mathcal X$ such that $G\times U$ is contained in $Z$, i.e., $g\cdot x\in V$ for all $g\in G$, $x\in U$. The union $\bigcup_{g\in G}gU$ is a $G$-invariant open neighborhood of $0$ in $\mathcal X$, contained in $V$.
\end{proof}

We also observe the following interesting fact.
\begin{lem}\label{thm:restrlocalaction}
Let $\rho\colon G\times \mathcal X\to\mathcal X$ be a continuous action of a compact group $G$ on a topological space $\mathcal X$. Assume $x_0\in\mathcal X$ is a fixed point of $G$. Then $x_0$ admits a fundamental system of $G$-invariant (open) neighborhoods.
\end{lem}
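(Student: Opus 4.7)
The plan is to mimic almost verbatim the argument given for part (b) of Lemma~\ref{thm:preparatory}, replacing the origin of a Banach space by the arbitrary fixed point $x_0\in\mathcal X$ and exploiting only the topological content of that proof, namely the tube lemma. The linear structure was not essential there; what mattered was that $\{0\}$ was a fixed point and that the action map was globally continuous, both of which we now have by hypothesis.

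More concretely, let $V\subset\mathcal X$ be an arbitrary (open) neighborhood of $x_0$. Since $\rho(g,x_0)=x_0\in V$ for every $g\in G$, the preimage $\rho^{-1}(V)\subset G\times\mathcal X$ is an open set containing the ``tube'' $G\times\{x_0\}$. First I would invoke the tube lemma, which applies because $G$ is compact: there exists an open neighborhood $U$ of $x_0$ in $\mathcal X$ with $G\times U\subset\rho^{-1}(V)$, i.e., $g\cdot y\in V$ for all $g\in G$ and all $y\in U$. Next I would form the $G$-saturation
\[\widetilde U:=\bigcup_{g\in G}g\cdot U=\bigcup_{g\in G}\phi_g(U).\]
This is $G$-invariant by construction, contains $x_0$ (since $x_0\in U$), and is contained in $V$ by the choice of $U$.

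Finally I need to check that $\widetilde U$ is open. Each $\phi_g\colon\mathcal X\to\mathcal X$ is a homeomorphism: it is continuous as a section of the continuous action $\rho$, and its inverse $\phi_{g^{-1}}$ is continuous for the same reason. Hence every $\phi_g(U)$ is open, and $\widetilde U$ is a union of open sets. Since $V$ was arbitrary, the collection of such $\widetilde U$ forms a fundamental system of $G$-invariant open neighborhoods of $x_0$.

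The only step that requires any care is the application of the tube lemma; this is entirely standard given compactness of $G$, so no genuine obstacle is expected. The result is genuinely a topological statement and requires no further structure on $\mathcal X$.
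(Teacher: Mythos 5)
Your proof is correct, and it does indeed reproduce the argument of Lemma~\ref{thm:preparatory}(b) verbatim, which is what you set out to do. It is worth noting, though, that the paper's own proof of Lemma~\ref{thm:restrlocalaction} takes a slightly different (dual) route: instead of forming the $G$-saturation $\bigcup_{g\in G}gU$ of the small neighborhood $U$ produced by the tube lemma, it forms the $G$-\emph{core} $W=\bigcap_{g\in G}gV$ of the given neighborhood $V$, which is automatically $G$-invariant and contained in $V$; the tube lemma then supplies an open $U\subset W$ showing that $W$ is a neighborhood of $x_0$, and finally one passes to the interior of $W$ to get openness. Your version is marginally more economical in that the union $\bigcup_g gU$ is open from the start (each $\phi_g$ being a homeomorphism), so no interior needs to be taken; the paper's version has the small advantage that $G$-invariance and containment in $V$ are immediate by construction and only the neighborhood property needs the tube lemma. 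Both hinge on exactly the same ingredients — compactness of $G$, continuity of $\rho$, and the tube lemma — and both are complete proofs.
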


\begin{proof}
Let $V$ be an arbitrary neighborhood of $x_0$, and set $W=\bigcap_{g\in G}gV$. Clearly $W$ is $G$-invariant, and $W\subset V$. Let us show that $W$ is a neighborhood of $x_0$. The set $\rho^{-1}(V)$ is an open subset of $G\times\mathcal X$ that contains $G\times\{x_0\}$, and by the compactness of $G$, it also contains the product $G\times U$, where $U$ is some open neighborhood of $x_0$. Thus, $U\subset W$, and $W$ is a neighborhood of $x_0$. The interior of $W$ is also $G$-invariant.
\end{proof}

We can now prove our result on the existence of slices.
\begin{teo}\label{thm:existenceofslicecompact}
In the above situation, assume that $\mathcal M$ is endowed with a connection which is $G$-invariant (i.e., each diffeomorhism $\phi_g$ is affine), and that $G_x$ is compact. Then there exists a slice $\mathcal S$ through $x$.
\end{teo}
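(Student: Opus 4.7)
The plan is to build $\mathcal S$ as $\exp^P_x(W_0)$, where $W_0$ is a suitable $G_x$-invariant open neighborhood of $0$ inside a $G_x$-invariant closed complement $W$ of $\mathrm{Im}\bigl(\mathrm d\beta_x(1)\bigr)$ in $T_x\mathcal M$. The first step is to set up the linear isotropy representation: each $\phi_g$ with $g\in G_x$ is a smooth diffeomorphism fixing $x$, so the differentials $\mathrm d\phi_g(x)\in GL(T_x\mathcal M)$ assemble into a linear representation of $G_x$; continuity of the $G$-action on $\mathcal M$, read in local charts, shows that this representation is strongly continuous. The orbit $G\cdot x$ is a finite-dimensional submanifold, so $\mathrm{Im}\bigl(\mathrm d\beta_x(1)\bigr)$ is closed, finite-dimensional, complemented, and $G_x$-invariant. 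Applying Lemma~\ref{thm:preparatory}(a) produces a $G_x$-invariant closed complement $W$, and Lemma~\ref{thm:preparatory}(b), applied to the $G_x$-action on $W$, yields a $G_x$-invariant open neighborhood $W_0\subset W\cap\mathcal A_x$ of $0$ on which $\exp^P_x$ is a diffeomorphism onto its image. Set $\mathcal S:=\exp^P_x(W_0)$.

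Properties (1) and (3) of Definition~\ref{thm:defslice} are now nearly free. Property (1) is just $T_x\mathcal S=W$. For (3), the hypothesis that the connection is $G$-invariant means every $\phi_g$ is affine, so Corollary~\ref{thm:corrightcomp}-type reasoning gives the intertwining identity
\[\phi_g\circ\exp^P_x=\exp^P_x\circ\mathrm d\phi_g(x)\qquad\text{for all } g\in G_x,\]
valid on $W_0$; combined with the $G_x$-invariance of $W_0$, this yields $\phi_g(\mathcal S)=\mathcal S$.

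The main obstacle is property (2). To attack it, I would pick a vector subspace $\mathfrak q\subset\mathfrak g$ complementary to the Lie algebra $\mathfrak g_x$ of $G_x$, and take $K:=\exp^G(V)$ for a sufficiently small open neighborhood $V\subset\mathfrak q$ of $0$; then $K$ is a smooth submanifold of $G$ through $1$ that is transverse to $G_x$, with $T_1 K=\mathfrak q$. Consider
\[\Psi\colon K\times W_0\longrightarrow\mathcal M,\qquad \Psi(k,w):=\phi_k\bigl(\exp^P_x(w)\bigr).\]
Using the assumed differentiability of the action at $x$ together with smoothness of $\exp^P_x$, one computes the differential at $(1,0)$ as
\[\mathrm d\Psi(1,0)(\xi,w)=\mathrm d\beta_x(1)\xi+w,\]
which is a topological isomorphism $\mathfrak q\oplus W\to T_x\mathcal M$: indeed $\mathrm d\beta_x(1)|_{\mathfrak q}$ is injective with image $\mathrm{Im}\bigl(\mathrm d\beta_x(1)\bigr)$ (since $\mathfrak q$ is a complement of $\ker \mathrm d\beta_x(1)=\mathfrak g_x$), and $W$ is a closed complement of that image. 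The inverse function theorem then exhibits $\Psi$ as a local homeomorphism near $(1,0)$, so $K\cdot\mathcal S=\Psi(K\times W_0)$ contains a neighborhood of $x$ in $\mathcal M$; $G$-saturation upgrades this to a neighborhood of the whole orbit $G\cdot x$, giving property (2).

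The delicate technical point is securing enough regularity of $\Psi$ to legitimize the inverse function theorem: globally the action is only continuous in $g$, so one must combine the differentiability hypothesis at $x$ with the affine character of each $\phi_g$ (in the spirit of Corollary~\ref{thm:corrightcomp}) to obtain $C^1$-type control of $\Psi$ on a neighborhood of $(1,0)$. Once this regularity is in hand, the rest of the verification reduces to tracking the direct sum $T_x\mathcal M=\mathrm{Im}\bigl(\mathrm d\beta_x(1)\bigr)\oplus W$ through a standard IFT application.
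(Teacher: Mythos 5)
Your construction of $\mathcal S$ and your verification of properties (1) and (3) of Definition~\ref{thm:defslice} follow the same route as the paper: use the isotropy representation, apply Lemma~\ref{thm:preparatory}(a) to get a $G_x$-invariant closed complement, Lemma~\ref{thm:preparatory}(b) to shrink to a $G_x$-invariant neighborhood, exponentiate, and use $\phi_g\circ\exp^P_x=\exp^P_{\phi_g(x)}\circ\mathrm d\phi_g(x)$ for invariance. That part is fine.

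The gap is in property (2). Your plan is to apply the inverse function theorem to $\Psi(k,w)=\phi_k\bigl(\exp^P_x(w)\bigr)$ near $(1,0)$, but the IFT needs $\Psi$ to be $C^1$ jointly in $(k,w)$ on a neighborhood of $(1,0)$, and the hypotheses do not supply that. The action is only assumed continuous in $g$; assumption (c) of the setup gives differentiability of $g\mapsto g\cdot x$ at $x$ only, and the $G$-invariance of the connection (affineness of each $\phi_g$) controls the behavior of $\phi_g$ in the $\mathcal M$-variable for a \emph{fixed} $g$ but says nothing about regularity in $g$. So the claim that combining differentiability at $x$ with affineness yields "$C^1$-type control of $\Psi$ on a neighborhood of $(1,0)$" is not justified; in fact it is precisely what fails in the intended applications (e.g.\ the left-composition action of $\Iso(\overline M,\overline g)$ on $\mathcal C^k$ unparameterized embeddings is only continuous, differentiable only at smooth embeddings). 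The paper's proof flags this exact difficulty: it observes that (2) would follow from transversality under a differentiability hypothesis "which we do not assume," and instead invokes a degree-theoretic surjectivity result, Proposition~\ref{thm:nonemptyintersection}, applied with $A=N=\mathcal M$, $M=G$, $Q=\mathcal S$, $\chi$ the action, $a_0=x$, $m_0=1$. The crucial feature of that proposition is that it requires only that $\chi(a_0,\cdot)$ be $C^1$, together with mere continuity in the parameter $a$; it then uses continuity of the Brouwer degree to conclude that $\chi(a,M)\cap Q\neq\emptyset$ for $a$ near $a_0$. That is what you would need to substitute for your IFT step to make the argument go through.
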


\begin{proof}
Consider the isotropy representation  of $G_x$ on $T_x\mathcal M$, given by $g\mapsto\mathrm d\phi_g(x)$. The finite-dimensional subspace $\mathrm{Im}\big(\mathrm d\beta_x(1)\big)$ is clearly invariant under this linear action. By part (a) of Lemma~\ref{thm:preparatory}, there exists a closed $G_x$-invariant complement $S$ of $\mathrm{Im}\big(\mathrm d\beta_x(1)\big)$. Denote by $\exp_x$ the exponential map of the $G$-invariant connection at $x$, and let $U_0\subset T_x\mathcal M$ be an open neighborhood of $0$ on which $\exp_x$ is a diffeomorphism. By part (b) of Lemma~\ref{thm:preparatory}, there exists an open neighborhood $\widetilde U_0\subset U_0$ of $0$ such that $\widetilde U_0\cap S$ is $G_x$-invariant. Set $$\mathcal S:=\exp_x(\widetilde U_0\cap S).$$

We claim that $\mathcal S$ is a slice for the action of $G$ at $x$. Property \eqref{itm:proprslices1} of slices is clearly satisfied, since $\mathrm d\exp_x(0)=\mathrm{Id}$, and $S$ is a closed
complement to $\mathrm{Im}\big(\mathrm d\beta_x(1)\big)$. Property \eqref{itm:proprslices2} would follow immediately from the transversality condition \eqref{itm:proprslices1} under the hypothesis of differentiability of the group action, which we do not assume. A slightly more involved topological argument based on degree theory is required for the continuous case, and this is discussed separately in Proposition~\ref{thm:nonemptyintersection}, which is to be applied with $A=N=\mathcal M$, $M=G$, $Q=\mathcal S$, $\chi$ being the action, $a_0=x$, and $m_0=1$. For property \eqref{itm:proprslices3}, observe that since the connection is $G$-invariant, then $\phi_g\circ\exp_x=\exp_{\phi_g(x)}\circ\,\mathrm d\phi_g(x)$, for all $g\in G$. Thus, given $v\in\widetilde U_0\cap S$ and $g\in G_x$, $\phi_g\big(\exp_x(v)\big)=\exp_x\big(\mathrm d\phi_g(x)v\big)\in\mathcal S,$ because $\widetilde U_0\cap S$ is $G_x$-invariant, i.e., $\mathcal S$ is $G_x$-invariant.
\end{proof}

\begin{prop}\label{thm:nonemptyintersection}
Let $M$ be a finite-dimensional manifold, $N$ a (possibly infinite-dimensional) Banach manifold,
$Q\subset N$ a Banach submanifold, and $A$ a topological space. Assume that $\chi\colon A\times M\to N$ is a continuous
function such that there exists $a_0\in A$ and $m_0\in M$ with:
\begin{itemize}
\item[(a)] $\chi(a_0,m_0)\in Q$;
\item[(b)] $\chi(a_0,\cdot)\colon M\to N$ is of class $\mathcal C^1$;
\item[(c)] $\partial_2\chi(a_0,m_0)\big(T_{m_0}M\big)+T_{\chi(a_0,m_0)}Q=T_{\chi(a_0,m_0)}N$.
\end{itemize}
Then, for $a\in A$ near $a_0$, $\chi(a,M)\cap Q\ne\emptyset$.
\end{prop}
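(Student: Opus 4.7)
The plan is to reduce the statement to a finite-dimensional degree-theoretic argument. Because $\chi$ is only continuous in the parameter $a$, no version of the implicit function theorem applies, so the substitute will be Brouwer degree together with homotopy invariance.

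First, I would pass to a local model for $Q$ near $n_0 := \chi(a_0,m_0)$. Since $Q$ is a Banach submanifold of $N$, there exist an open neighborhood $U$ of $n_0$, a Banach space $E$, and a smooth submersion $\pi\colon U\to E$ with $\pi^{-1}(0)=Q\cap U$ and $\ker \mathrm d\pi(n_0)=T_{n_0}Q$. Define
\[
f(a,m):=\pi\bigl(\chi(a,m)\bigr),
\]
which is a continuous function of $(a,m)$ in a neighborhood of $(a_0,m_0)$ with $f(a_0,m_0)=0$, and such that $f(a_0,\cdot)$ is of class $\mathcal C^1$ near $m_0$. The search for $m$ with $\chi(a,m)\in Q$ becomes the search for a zero of $f(a,\cdot)$.

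Next, I would use hypothesis (c) to derive the key finite-dimensional reduction. Composing with $\mathrm d\pi(n_0)$ (which has kernel $T_{n_0}Q$), the splitting $\partial_2\chi(a_0,m_0)(T_{m_0}M)+T_{n_0}Q=T_{n_0}N$ translates into surjectivity of the linear map
\[
L:=\mathrm d\pi(n_0)\circ\partial_2\chi(a_0,m_0)\colon T_{m_0}M\longrightarrow E.
\]
Since $M$ is finite-dimensional, this forces $\dim E<\infty$; set $k=\dim E$. Pick a $k$-dimensional subspace $V\subset T_{m_0}M$ on which $L|_V$ is an isomorphism, and inside a chart at $m_0$ realize a small closed $k$-disk $D$ tangent to $V$ at $m_0$. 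By the inverse function theorem applied to $f(a_0,\cdot)$ restricted to this $k$-disk, after shrinking $D$, the map $f(a_0,\cdot)|_D\colon D\to E$ is a homeomorphism onto a closed neighborhood of $0$ in $E$, and in particular $r:=\mathrm{dist}\bigl(0,f(a_0,\partial D)\bigr)>0$ and the Brouwer degree $\deg\bigl(f(a_0,\cdot)|_D,\mathrm{int}(D),0\bigr)=\pm1$.

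Finally, I would exploit joint continuity of $f$ together with compactness of $D$: there is a neighborhood $W$ of $a_0$ in $A$ with $\sup_{m\in D}\|f(a,m)-f(a_0,m)\|_E<r$ for every $a\in W$. Then the straight-line homotopy $H_t(m):=(1-t)f(a_0,m)+tf(a,m)$ never vanishes on $\partial D$ for any $t\in[0,1]$, so by homotopy invariance of Brouwer degree
\[
\deg\bigl(f(a,\cdot)|_D,\mathrm{int}(D),0\bigr)=\deg\bigl(f(a_0,\cdot)|_D,\mathrm{int}(D),0\bigr)=\pm1\ne0,
\]
which produces $m\in D$ with $f(a,m)=0$, i.e., $\chi(a,m)\in Q$. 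The only subtle point is the one that makes the argument work at all, namely observing that the transversality hypothesis forces $E$ to be finite-dimensional so that classical Brouwer degree is available; once this is in place, the continuous-in-$a$ hypothesis is handled by the standard homotopy principle and no differentiability in $a$ is needed.
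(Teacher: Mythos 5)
Your proof is correct and takes essentially the same approach as the paper: reduce to a finite-dimensional target $E$ via a local submersion model for $Q$, restrict to a $k$-dimensional slice of $M$ where the composed linearization is an isomorphism, and invoke Brouwer degree with its homotopy invariance to propagate the nontrivial degree from $a_0$ to nearby $a$. The only noticeable (and welcome) difference is that you spell out why the codimension of $Q$ is automatically finite — a point the paper leaves implicit when writing $N=Q\oplus\mathds R^d$ with $d\le\dim M$ — and you make the homotopy estimate explicit where the paper just cites "continuity of the topological degree."
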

\begin{proof}
Let $f\colon U\subset\mathds R^d\to\mathds R^d$ be a $\mathcal C^1$ function, defined on an open
neighborhood $U$ of $0$, such that $f(0)=0$ and $\mathrm df(0)$ an isomorphism. The induced
map $\widetilde f\colon \mathds S^{d-1}\to\mathds S^{d-1}$ is defined by $\widetilde f(x)=\Vert f(rx)\Vert^{-1}f(rx)$, where $r>0$ is such that $0$ is the unique zero of $f$ in the closed ball $\overline{B(0,r)}$ of $\mathds R^d$. This induced map must have topological degree equal to $\pm1$.

If $A$ is any topological space, $f\colon A\times U\to\mathds R^d$ is continuous, and $a_0\in A$ is
such that $f(a_0,\cdot)$ is of class $\mathcal C^1$, $f(a_0,0)=0$ and $\partial_2f(a_0,0)$ is an isomorphism, for $a$ near $a_0$, and $r>0$ sufficiently small, $0\in f\big(a,\overline{B(0,r)}\big)$. This follows from the continuity of the topological degree. The same conclusion holds for a function $f\colon A\times U\to\mathds R^d$, where now $U$ is an open neighborhood of $0$ in $\mathds R^s$, with $s\ge d$, under the assumption that $f(a_0,\cdot)$ is of class $\mathcal C^1$, $f(a_0,0)=0$, and $\partial_2f(a_0,0)$ be surjective. Namely, it suffices to apply the argument above to the function obtained by restricting $f$ to a $d$-dimensional subspace where $\partial_2f(a_0,0)$ is an isomorphism.

To finish the proof, use local coordinates adapted to $Q$ in $N$, and assume that $M$, $Q$ and $N$
are Banach spaces, with $N=Q\oplus\mathds R^d$,  $d\le s=\mathrm{dim}(M)$ is the codimension of $Q$, and $m_0=0$. In this situation, the conclusion is obtained applying the argument above to the function
$f\colon A\times M\to\mathds R^d$ given by $f(a,m)=\pi\big(\chi(a,m)\big)$, where $\pi\colon N\to\mathds R^d$ is the
projection relative to the decomposition $N=Q\oplus\mathds R^d$. Clearly, $f(a,m)=0$ if and only if
$\chi(a,m)\in Q$. Assumption (a) gives $f(a_0,0)=0$, and assumption (c) implies that $\partial_2f(a_0,0)$ is surjective.
\end{proof}

\subsection{Local actions}
\label{sub:localactions}
The existence of slices proved in Theorem~\ref{thm:existenceofslicecompact} holds in the more general case of \emph{local} group actions. Let us briefly recall the definition and a few basic facts about local actions.

Let $G$ be a Lie group and $\mathcal M$ a topological manifold. By a \emph{local action} of $G$ on $\mathcal M$, we mean a continuous map $\rho\colon \mathrm{Dom}(\rho)\subset G\times\mathcal M\to\mathcal M$, defined on an open subset $\mathrm{Dom}(\rho)\subset G\times\mathcal M$ containing $\{1\}\times\mathcal M$ satisfying:
\begin{itemize}
\item[(a)] $\rho(1,x)=x$ for all $x\in\mathcal M$;
\item[(b)] if $(g_2,x)\in\mathrm{Dom}(\rho)$ and $\big(g_1,\rho(g_2,x)\big)\in\mathrm{Dom}(\rho)$, then
$(g_1g_2,x)\in\mathrm{Dom}(\rho)$, and
$\rho\big(g_1,\rho(g_2,x)\big)=\rho(g_1g_2,x)$.
\end{itemize}
Usual group actions can be obtained as the particular case in which the domain $\mathrm{Dom}(\rho)$ coincides with the entire $G\times\mathcal M$. Local actions can be \emph{restricted}, in the following sense. If $\mathcal N\subset\mathcal M$ is a submanifold, then one has a local action $\widetilde\rho$ of $G$ on $\mathcal N$ by setting $\mathrm{Dom}(\widetilde\rho)=\big\{(g,x)\in(G\times\mathcal N)\cap\mathrm{Dom}(\rho):\rho(g,x)\in\mathcal N\big\}$, and $\widetilde\rho=\rho\vert_{\mathrm{Dom}(\widetilde\rho)}$. In fact, the most natural occurrence\footnote{In fact, local actions of groups are always restrictions of global actions. In the literature, these are known as \emph{enveloping actions} of the local action, see~\cite{Aba}.} of local actions is when one has a (global) action of a group $G$ on a topological manifold $\mathcal X$, and $\mathcal M$ is an open (not necessarily $G$-invariant)
subset of $\mathcal X$. The restriction of the action of $G$ to $\mathcal M$ in the above sense is a
local action of $G$ on $\mathcal M$.

Assumption (b) implies that for all $x\in M$, denoting by \[G_x=\big\{g\in G:(g,x)\in\mathrm{Dom}(\rho),\ \rho(g,x)=x\big\}\] the isotropy of $x$, then $G_x$ is a closed subgroup of $G$.

Given a local action $\rho$ of $G$ on $\mathcal M$, for $g\in G$, let $\rho_g$ denote the map $\rho(g,\cdot)$, defined on a (possibly empty) open set $\mathrm{Dom}(\rho_g)=\mathrm{Dom}(\rho)\cap\{g\}\times\mathcal M$. The following follow easily from the definition.

\begin{lem}\label{thm:lemlocalactions}
Let $\rho\colon \mathrm{Dom}(\rho)\subset G\times\mathcal M\to\mathcal M$ be a local action of $G$ on $M$. Then
\begin{itemize}
\item[(a)] for all $g\in G$, the map $\rho_g\colon \rho_g^{-1}\big(\mathrm{Dom}(\rho_{g^{-1}})\big)\to\rho_{g^{-1}}^{-1}\big(\mathrm{Dom}(\rho_g)\big)$ is a homeomorphism;
\item[(b)] the set $\big\{(g,x)\in G\times\mathcal M:x\in\rho_g^{-1}\big(\mathrm{Dom}(\rho_{g^{-1}})\big)\big\}$ is an open subset that contains $\{1\}\times\mathcal M$;
in particular:
\item[(c)] for all $x\in\mathcal M$, there exists an open neighborhood $U_x$ of $1$ in $G$ such that for all $g\in U_x$, $x\in\rho_g^{-1}\big(\mathrm{Dom}(\rho_{g^{-1}})\big)$.
\end{itemize}
\end{lem}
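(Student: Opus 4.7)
The plan is to dispatch the three parts essentially from the defining axioms (a) and (b) of local actions, together with the openness of $\mathrm{Dom}(\rho)$ and the continuity of $\rho$. Part (a) is an algebraic computation, part (b) is a topological preimage observation, and part (c) is an immediate consequence of (b).

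For part (a), I would first check that $\rho_g$ maps $\rho_g^{-1}\bigl(\mathrm{Dom}(\rho_{g^{-1}})\bigr)$ into $\rho_{g^{-1}}^{-1}\bigl(\mathrm{Dom}(\rho_g)\bigr)$. Given $x$ in the source, apply axiom (b) of local actions to the pair $(g^{-1},g)$ at $x$: since $(g,x)\in\mathrm{Dom}(\rho)$ and $(g^{-1},\rho_g(x))\in\mathrm{Dom}(\rho)$, we get $(g^{-1}g,x)=(1,x)\in\mathrm{Dom}(\rho)$ and $\rho_{g^{-1}}\bigl(\rho_g(x)\bigr)=\rho(1,x)=x$. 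So $\rho_{g^{-1}}\circ\rho_g$ is the identity on $\rho_g^{-1}(\mathrm{Dom}(\rho_{g^{-1}}))$, and by symmetry $\rho_g\circ\rho_{g^{-1}}$ is the identity on $\rho_{g^{-1}}^{-1}(\mathrm{Dom}(\rho_g))$. In particular $\rho_g(x)$ lies in $\rho_{g^{-1}}^{-1}(\mathrm{Dom}(\rho_g))$, and $\rho_g$ and $\rho_{g^{-1}}$ are mutual inverses between these sets. Continuity of both in each variable follows from the joint continuity of $\rho$, so $\rho_g$ is a homeomorphism.

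For part (b), I would introduce the auxiliary map
\[
\Phi\colon\mathrm{Dom}(\rho)\longrightarrow G\times\mathcal M,\qquad \Phi(g,x)=\bigl(g^{-1},\rho(g,x)\bigr).
\]
This is continuous because inversion in $G$ and $\rho$ itself are continuous. The set in question unfolds as
\[
\bigl\{(g,x):x\in\rho_g^{-1}(\mathrm{Dom}(\rho_{g^{-1}}))\bigr\}=\bigl\{(g,x)\in\mathrm{Dom}(\rho):\bigl(g^{-1},\rho(g,x)\bigr)\in\mathrm{Dom}(\rho)\bigr\}=\Phi^{-1}\bigl(\mathrm{Dom}(\rho)\bigr),
\]
which is open in $\mathrm{Dom}(\rho)$ and therefore open in $G\times\mathcal M$. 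Containment of $\{1\}\times\mathcal M$ is immediate from axiom (a): $\rho_1=\mathrm{id}_{\mathcal M}$, so $\rho_1^{-1}(\mathrm{Dom}(\rho_1))=\mathcal M$.

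Part (c) then follows by taking, for each $x\in\mathcal M$, a product neighborhood $U_x\times V_x$ of $(1,x)$ inside the open set from (b); then $U_x\times\{x\}\subset U_x\times V_x$ lies in that set, which is exactly the statement to prove. The only mildly delicate point in the whole argument is keeping track of the compositions in (a) and confirming that $\rho_g$ actually lands in the claimed target set; once that composition is unpacked via axiom (b) of local actions, the rest is automatic.
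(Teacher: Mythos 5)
Your proof is correct, and since the paper offers no proof of this lemma (it only says the statements ``follow easily from the definition''), yours is a complete and natural expansion of that gesture. The only thing worth flagging is a small imprecision in wording in part (a): what you need is simply that $\rho_g$ is continuous as a restriction of the jointly continuous $\rho$ to (a subset of) the slice $\{g\}\times\mathcal M$, rather than ``continuity in each variable''; the rest of the argument, including the key application of axiom (b) of local actions to show $\rho_{g^{-1}}\circ\rho_g=\mathrm{id}$ on the source and that $\rho_g$ lands in the stated target, and the auxiliary map $\Phi(g,x)=(g^{-1},\rho(g,x))$ exhibiting the set in (b) as $\Phi^{-1}(\mathrm{Dom}(\rho))$, is exactly right.
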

In view of (c) above, one can define a map $\beta_x\colon \mathrm{Dom}(\beta_x)\subset G\to\mathcal M$ on a neighborhood $\mathrm{Dom}(\beta_x)$ of $1$ in $G$, by $\beta_x(g)=\rho(g,x)$, compare with \eqref{eq:defbetax}. In particular, if $x\in\mathcal M$ is such that the map $\beta_x$ is differentiable (at $1$), then one has a well-defined linear map $\mathrm d\beta_x(1)\colon \mathfrak g\to T_x\mathcal M$. A subset $C\subset\mathcal M$ will be called $G$-invariant if, given $x\in C$, then $\rho(g,x)\in C$ for all $g\in\mathrm{Dom}(\beta_x)$.

In view of the above, the definition of slice for local actions is totally analogous to Definition~\ref{thm:defslice}. Furthermore, the statement and proof of Theorem~\ref{thm:existenceofslicecompact} carry over \emph{verbatim} to the case of local affine actions.
\end{section}

\begin{section}{Equivariant bifurcation}
\label{sec:mainbif}

Let us define what we intend by equivariant bifurcation. To simplify our discussion, we restrict to the case when there is a globally defined action (opposed to a local action). Consider the same setup (a), (b) and (c) of Section~\ref{sec:slices}. Let $[a,b]\ni\lambda\mapsto\mathfrak f_\lambda$ be a continuous path of $\mathcal C^k$-functionals $\mathfrak f_\lambda\colon\mathcal M\to\mathds R$, $k\ge2$, which are $G$-invariant, i.e., $\mathfrak f_\lambda(g\cdot y)=\mathfrak f_\lambda(y)$ for all $y\in\mathcal M$, $g\in G$ and $\lambda\in[a,b]$. We are interested in studying bifurcation of solutions to the equation $\dd\mathfrak f_\lambda(x)=0$.

\begin{defin}\label{def:eqbif}
Given $\lambda_0\in[a,b]$, we say that \emph{equivariant bifurcation} of critical points of the family $(\mathfrak f_\lambda)_{\lambda\in[a,b]}$ occurs at
$(x_{\lambda_0},\lambda_0)$ if there is a sequence $(x_n,\lambda_n)\in\mathcal M\times[a,b]$ such that
\begin{enumerate}
\item\label{itm:eqbif1} $\lim\limits_{n\to\infty}(x_n,\lambda_n)=(x_{\lambda_0},\lambda_0)$;\smallskip

\item\label{itm:eqbif2} $\mathrm d\mathfrak f_{\lambda_n}(x_n)=0$, for all $n$;\smallskip

\item\label{itm:eqbif3} $x_n\not\in G\cdot x_{\lambda_n}$, for all $n$.
\end{enumerate}
\end{defin}

We now discuss our central result, which is a sufficient condition for equivariant bifurcation in the above sense. It will be obtained by combining the slice theory developed in the previous section with a nonlinear formulation of a celebrated bifurcation result of J. Smoller and A. Wasserman~\cite{SmoWas}. In order to deal with the important general case of functionals defined on \emph{Banach} manifolds (rather than \emph{Hilbert} manifolds), we will need an appropriate framework described by a set of assumptions on an auxiliary Hilbert/Fredholm structure of the problem.

Let $B_2$ and $B_0$ be Banach spaces and $H$ be a Hilbert space with inner product $\langle\cdot,\cdot\rangle$. To keep things in perspective, in our geometric applications to a finite-dimensional manifold $M$, we will set $B_2=\mathcal C^{2,\alpha}(M)$, $B_0=\mathcal C^{0,\alpha}(M)$ and $H=L^{2}(M)$. Assume that $\mathcal M$ is modeled on $B_2$ and  is endowed with an affine $G$-invariant connection. Let $[a,b]\ni\lambda\mapsto x_\lambda\in\mathcal M$ be a continuous path, such that for all $\lambda$, $x_\lambda$ is a critical point of $\mathfrak f_\lambda$, which actually implies that the entire orbit $G\cdot x_\lambda$ consists of critical points of $\mathfrak f_\lambda$. Also, assume that a sufficiently small open set $U\subset\mathcal M$ containing all $x_\lambda$ admits continuous embeddings $U\subset B_0\subset H$, such that the following are satisfied. First, the local $G$-action on $U$ extends continuously to a local $G$-action on $B_0$ and on $H$. Second, there exists a continuous path $\lambda\mapsto\mathfrak d\mathfrak f_\lambda$ of $G$-equivariant $\mathcal C^{k-1}$-maps $\mathfrak d\mathfrak f_\lambda\colon U\to B_0$ satisfying
\begin{equation}\label{eq:defdstranof}
\mathrm d\mathfrak f_\lambda(y)\xi=\langle\mathfrak d\mathfrak f_\lambda(y),\xi\rangle,
\end{equation}
for all $y\in U$, $\xi\in T_yU\cong B_2$ and $\lambda$. In particular, we have
\[\mathfrak d\mathfrak f_\lambda(x_\lambda)=0,\quad\mbox{ for all }\lambda\in[a,b].\]
The map $\mathfrak d\mathfrak f_\lambda$ plays the role of the \emph{gradient} of $\mathfrak f_\lambda$, which does not exist in the usual sense due to the lack of a complete inner product on $B_2$.

For all $\lambda\in[a,b]$, let $G_\lambda$ be the isotropy of $x_\lambda$, which is a closed subgroup of $G$. Given $\varepsilon>0$, set
\begin{equation}\label{eq:nlambdae}
N_\lambda(\varepsilon):=\mathrm{span}\big\{v\in B_2:\mathrm d(\mathfrak d\mathfrak f_\lambda)_{x_\lambda}(v)=\mu v, \;\text{for some $\mu\le\varepsilon$}\big\}.
\end{equation}
We define the \emph{generalized negative eigenspace}\footnote{In the terminology of J. Smoller and A. Wasserman~\cite{SmoWas}, this is the \emph{neigenspace} of $\mathrm d(\mathfrak d\mathfrak f_\lambda)_{x_\lambda}$.} of $\mathrm d(\mathfrak d\mathfrak f_\lambda)_{x_\lambda}$ to be
\begin{equation}\label{eq:nlambda}
N_\lambda:=N_\lambda(0).
\end{equation}

Before stating the main result of this section, we briefly recall yet another notion used by J. Smoller and A. Wasserman~\cite[p. 73]{SmoWas}. A group $G$ is said to be \emph{nice} if, given unitary representations $\pi_1$ and $\pi_2$ of $G$ on Hilbert spaces $V_1$ and $V_2$ respectively, such that $B_1(V_1)/S_1(V_1)$ and $B_1(V_2)/S_1(V_2)$ have the same (equivariant) homotopy type as $G$-spaces, then $\pi_1$ and $\pi_2$ are equivalent. Here, $B_1$ and $S_1$ denote respectively the unit ball and the unit sphere in the specified Hilbert space, and the quotient $B_1(V_i)/S_1(V_i)$ is meant in the topological sense.\footnote{i.e., it denotes the unit ball of $V_i$ with its boundary contracted to one point.}

\begin{example}\label{ex:nice}
Any compact connected Lie group $G$ is nice. More generally, any compact Lie group with less than $5$ connected components is nice. Denoting by $G^0$ the identity connected component of $G$, then $G$ is nice if the discrete part $G/G^0$ is either the product of a finite number of copies of $\mathds Z_2$ (e.g., the case $G=\mathrm O(n)$); or the product of a finite number of copies of $\mathds Z_3$; or if $G/G^0=\mathds Z_4$, see~\cite{LeeWas}.
\end{example}

We are now ready to state and prove our main result.

\begin{teo}\label{thm:Gbifurcation}
In the above setup, assume that
\begin{itemize}
\item[(a)] there exists $\varepsilon>0$ such that $\mathrm{dim}\big(N_\lambda(\varepsilon)\big)<+\infty$, for all $\lambda\in[a,b]$;
\item[(b)] for all $\lambda$, $G_\lambda$ is a fixed compact nice subgroup $G_0$ of $G$;
\item[(c)] $\mathrm{Ker}\big(\mathrm d(\mathfrak d\mathfrak f_a)_{x_a}\big)=T_{x_a}(G\cdot x_a)$ and
$\mathrm{Ker}\big(\mathrm d(\mathfrak d\mathfrak f_b)_{x_b}\big)=T_{x_b}(G\cdot x_b)$;
\item[(d)] $\mathrm{dim}(N_a)\ne\mathrm{dim}(N_b)$.
\end{itemize}
Then, equivariant bifurcation of the family $(x_\lambda)_{\lambda}$ of critical points of $(\mathfrak f_\lambda)_{\lambda}$ occurs at some $(x_{\lambda_0},\lambda_0)$, with $\lambda_0\in\left]a,b\right[$.
\end{teo}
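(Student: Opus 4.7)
The strategy is to reduce the equivariant bifurcation problem on $\mathcal M$ to a ``fixed-point'' bifurcation problem on a slice through $x_\lambda$, where the classical theorem of Smoller--Wasserman (in the nonlinear form of Appendix~\ref{app:A}) applies to the compact nice group $G_0$ that fixes $x_\lambda$. The key preliminary step is the coherent construction of slices along the whole branch $[a,b]\ni\lambda\mapsto x_\lambda$.

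To this end, I use parallel transport along $\lambda\mapsto x_\lambda$ with respect to the $G$-invariant connection to identify each $T_{x_\lambda}\mathcal M$ with a single Banach space $\mathcal X$. Since the connection is $G$-invariant and the curve lies in the $G_0$-fixed set, parallel transport is $G_0$-equivariant, so there is a single isotropy representation of $G_0$ on $\mathcal X$. By Lemma~\ref{thm:preparatory}(a), I choose a closed $G_0$-invariant complement $S\subset\mathcal X$ of the finite-dimensional subspace corresponding to the orbit tangent direction; transporting $S$ back yields a coherent family $S_\lambda\subset T_{x_\lambda}\mathcal M$. By Lemma~\ref{thm:preparatory}(b) I may shrink to a common $G_0$-invariant neighborhood $V\subset S$ of $0$, and, writing $\mathcal P_\lambda\colon\mathcal X\to T_{x_\lambda}\mathcal M$ for the transport, Theorem~\ref{thm:existenceofslicecompact} shows that $\mathcal S_\lambda:=\exp_{x_\lambda}(\mathcal P_\lambda V)$ is a slice through $x_\lambda$ for every $\lambda$.

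Define the reduced path $\tilde{\mathfrak f}_\lambda\colon V\to\mathds R$ by $\tilde{\mathfrak f}_\lambda(v):=\mathfrak f_\lambda\bigl(\exp_{x_\lambda}(\mathcal P_\lambda v)\bigr)$. Each $\tilde{\mathfrak f}_\lambda$ is $G_0$-invariant, of class $\mathcal C^k$, and has a $G_0$-fixed critical point at $v=0$. By slice transversality together with $G$-invariance of $\mathfrak f_\lambda$, critical points of $\tilde{\mathfrak f}_\lambda$ near $0$ correspond bijectively to $G$-orbits of critical points of $\mathfrak f_\lambda$ near $x_\lambda$, and the Hessian $\mathrm d^2\tilde{\mathfrak f}_\lambda(0)$ is, via the Hilbert identification coming from $\mathfrak d\mathfrak f_\lambda$, the restriction of $\mathrm d(\mathfrak d\mathfrak f_\lambda)_{x_\lambda}$ to $S_\lambda$. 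Hypothesis~(a) supplies Fredholmness; hypothesis~(c) yields non-degeneracy of these restricted Hessians at $\lambda=a,b$; and the generalized negative eigenspace on $S_\lambda$ is $N_\lambda$ modulo $T_{x_\lambda}(G\cdot x_\lambda)$. Since both orbits $G\cdot x_a$ and $G\cdot x_b$ have equal dimension $\dim G-\dim G_0$, hypothesis~(d) translates into a jump of Morse index for $\tilde{\mathfrak f}_\lambda$ from $a$ to $b$; as $G_0$ is nice, this forces the isotropy representations of $G_0$ on the negative eigenspaces at $0$ of $\tilde{\mathfrak f}_a$ and $\tilde{\mathfrak f}_b$ to be inequivalent.

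These are precisely the hypotheses of the nonlinear Smoller--Wasserman theorem recalled in Appendix~\ref{app:A}, applied to $\tilde{\mathfrak f}_\lambda$ at the $G_0$-fixed critical point $0$. It yields $\lambda_0\in\,]a,b[$ and a sequence $v_n\to 0$, $\lambda_n\to\lambda_0$, with $v_n\neq 0$ and $\mathrm d\tilde{\mathfrak f}_{\lambda_n}(v_n)=0$. Setting $x_n:=\exp_{x_{\lambda_n}}(\mathcal P_{\lambda_n}v_n)\in\mathcal S_{\lambda_n}$, the correspondence above gives $x_n\to x_{\lambda_0}$ with $\mathrm d\mathfrak f_{\lambda_n}(x_n)=0$; and the transversality of $\mathcal S_{\lambda_n}$ with the finite-dimensional orbit $G\cdot x_{\lambda_n}$, combined with $v_n\neq 0$, ensures $x_n\notin G\cdot x_{\lambda_n}$ for large $n$, which is the required equivariant bifurcation. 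I expect the main obstacle to be the coherent slice construction together with the precise identification $\mathrm d^2\tilde{\mathfrak f}_\lambda(0)=\mathrm d(\mathfrak d\mathfrak f_\lambda)_{x_\lambda}|_{S_\lambda}$ within the Banach/Hilbert framework of Section~\ref{sec:mainbif}, so that the hypotheses of the SmW theorem can be verified cleanly on the slice.
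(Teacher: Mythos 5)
Your overall strategy --- pass to a $G_0$-invariant slice, observe it is a natural constraint, and apply the nonlinear Smoller--Wasserman theorem of Appendix~\ref{app:A} --- is the one the paper uses. The paper's proof is terse and leaves the coherent reduction along $\lambda\in[a,b]$ implicit; your attempt to supply it by parallel-transporting a single $G_0$-invariant complement $S\subset T_{x_a}\mathcal M$ along the branch contains a genuine gap. First, $\lambda\mapsto x_\lambda$ is only assumed continuous, so parallel transport along it is not defined without more regularity of the path. Second, and more importantly, even along a smooth path parallel transport does \emph{not} carry the orbit tangent $T_{x_a}(G\cdot x_a)$ onto $T_{x_\lambda}(G\cdot x_\lambda)$: the fundamental vector fields of the $G$-action are generically not parallel along the curve. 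Hence $\mathcal P_\lambda S$ need not be transverse to $T_{x_\lambda}(G\cdot x_\lambda)$, in which case $\exp_{x_\lambda}(\mathcal P_\lambda V)$ violates property~(1) of Definition~\ref{thm:defslice}, Theorem~\ref{thm:existenceofslicecompact} no longer produces a slice, the natural-constraint argument breaks, and the identification of $\mathrm d^2\tilde{\mathfrak f}_\lambda(0)$ with $\mathrm d(\mathfrak d\mathfrak f_\lambda)_{x_\lambda}\big\vert_{S_\lambda}$ --- on which your import of hypotheses (c) and (d) into the slice model relies --- fails, in particular at $\lambda=b$.

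The construction can be repaired by choosing the complements at each $\lambda$ rather than transporting one. Since $G_\lambda=G_0$ for all $\lambda$, the subspaces $T_{x_\lambda}(G\cdot x_\lambda)\subset T_{x_\lambda}\mathcal M$ form a continuous family of finite-dimensional $G_0$-invariant subspaces of constant dimension. Pick a continuous family of bounded projections onto them, average each over the compact group $G_0$ as in the proof of Lemma~\ref{thm:preparatory}(a) (the Bochner integral depends continuously on the input projector), and take $S_\lambda$ to be their kernels: this gives a continuous family of closed $G_0$-invariant complements, and $\exp_{x_\lambda}$ applied to a small $G_0$-invariant neighborhood of $0$ in $S_\lambda$ is an honest slice at $x_\lambda$. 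With these $S_\lambda$ your reduction and the application of the Smoller--Wasserman theorem go through, but the parallel-transport shortcut as written is not justified.
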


\begin{proof}
Under the above hypotheses, Theorem \ref{thm:existenceofslicecompact} ensures the existence of a slice $\mathcal S$ invariant under the action of $G_{0}$ by diffeomorphisms. We have a family $T_{\lambda}=\mathrm d \mathfrak f_\lambda $ of $G_{0}$-equivariant sections of $T\mathcal S$. Note that, since $\mathfrak f_\lambda$ is constant along the orbits and by the transversality property (1) of the slice, we have that $\mathcal S$ is a \emph{natural constraint}. In other words, the constrained critical points of the restriction $\mathfrak f_\lambda\big\vert_\mathcal S$ of $\mathfrak f_\lambda$ to $\mathcal S$ actually satisfy $\mathrm d \mathfrak f_\lambda (x_{\lambda})=0.$ Assumption (c) means that $x_{a}$ and $x_{b}$ are (equivariantly) nondegenerate critical points. The result then follows from \cite[Thm 2.1]{SmoWas}, in its nonlinear formulation explained in Appendix~\ref{app:A}.
\end{proof}

\begin{rem}\label{thm:fredhassumption}
Assumption (a) in Theorem~\ref{thm:Gbifurcation} is satisfied, for instance, when
$\lambda\mapsto\mathrm d\,\mathfrak d\mathfrak f_\lambda(x_\lambda)\colon B_2\to B_0$
is a continuous path of Fredholm operators that are \emph{essentially positive}. By definition, this means that $\mathrm d\mathfrak d\,\mathfrak f_\lambda(x_\lambda)$ are Fredholm operators of the form $P_\lambda+K_\lambda$, where $P_\lambda\colon B_2\to B_0$ is a symmetric isomorphism (relatively to the inner product of $H$) and satisfies $\langle P_\lambda x,x\rangle>0$ for all $x\in B_2\setminus\{0\}$; and $K_\lambda\colon B_2\to B_0$ is a compact symmetric operator (also relatively to to the inner product of $H$). In this situation, the space $N_\lambda(\varepsilon)$ is the direct
sum of the eigenspaces of the compact operator $P_\lambda^{-1}K_\lambda$ (which is symmetric with respect to the inner
product defined by $P_\lambda$, hence diagonalizable) corresponding to eigenvalues less than or equal to $\varepsilon-1<0$. Assuming $\varepsilon<1$, the operator $P_\lambda^{-1}K_\lambda$ has only a finite number of such eigenvalues, and each of them has finite multiplicity. By continuity, one can give a uniform estimate on the dimension of $N_\lambda(\varepsilon)$, for $\lambda\in[a,b]$.
\end{rem}

Assumption (d) in Theorem~\ref{thm:Gbifurcation} means that there is a \emph{jump} of the Morse index of $x_\lambda$, as $\lambda$ goes from $a$ to $b$. We now present a subtler criterion for equivariant bifurcation, where this assumption is weakened. Recall the \emph{isotropy representation} $\pi_\lambda$ of $G_\lambda$ on $T_{x_\lambda}\mathcal M$ is the linear representation defined by $\pi_\lambda(g)=\mathrm d\phi_g(x_\lambda)$. Since $\mathfrak d\mathfrak f_\lambda$ is equivariant, it is easy to see that $N_\lambda(\varepsilon)$ is invariant under $\pi_\lambda$, for all $\varepsilon>0$. Define the \emph{negative isotropy representation} $\pi_\lambda^-$ to be the restriction
\begin{equation}\label{eq:negisorep}
\pi_\lambda^-:=\pi_\lambda\vert_{N_\lambda}\colon N_\lambda\longrightarrow N_\lambda.
\end{equation}
Observe that $\dim N_\lambda$ is the Morse index of $x_\lambda$.

\begin{teo}\label{thm:Gbifurcation2}
Replace the assumption {\rm (d)} of Theorem~\ref{thm:Gbifurcation} with
\begin{itemize}
\item[(d')] the negative isotropy representations $\pi_a^-$ and $\pi_b^-$ are not equivalent.\footnote{Two representations $\pi_i:H\to\mathrm{GL}(V_i)$, $i=1,2$, of the group $H$ on the vector spaces $V_1$ and $V_2$ respectively, are \emph{equivalent} if there exists a $H$-equivariant isomorphism $T:V_1\to V_2$, i.e., an isomorphism satisfying $T\big(\pi_1(h)v\big)=\pi_2(h)\big(T(v)\big)$ for all $h\in H$ and all $v\in V_1$. In particular, $\dim V_1=\dim V_2$.}
\end{itemize}
Then, the same conclusion holds, i.e., equivariant bifurcation of $(x_\lambda)_{\lambda\in[a,b]}$ occurs at some $(x_{\lambda_0},\lambda_0)$, with $\lambda_0\in\left]a,b\right[$.
\end{teo}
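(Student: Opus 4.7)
The approach is exactly that of the proof of Theorem~\ref{thm:Gbifurcation}, with the sole modification of invoking \cite[Thm~3.3]{SmoWas} (in its nonlinear version from Appendix~\ref{app:A}) in place of \cite[Thm~2.1]{SmoWas}. The classical \cite[Thm~3.3]{SmoWas} guarantees bifurcation from a fixed branch of zeros of a path of $G_0$-equivariant gradient-like operators on a Hilbert space, under nondegeneracy at the endpoints and inequivalence of the $G_0$-representations on the generalized negative eigenspaces of the endpoint linearizations, for $G_0$ a nice compact group.

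First I would apply Theorem~\ref{thm:existenceofslicecompact} to build a $G_0$-invariant slice $\mathcal{S}$ and reduce the problem on $\mathcal{M}$ to one on $\mathcal{S}$. By continuity and the transversality property of the slice, combined with the constant-isotropy hypothesis (b), the curve $x_\lambda$ can be brought into $\mathcal{S}$ after a continuous $G$-valued gauge, which does not affect the bifurcation problem since the latter is $G$-invariant. The natural-constraint property of $\mathcal{S}$, which follows from $G$-invariance of $\mathfrak{f}_\lambda$ and property~\eqref{itm:proprslices1} of slices exactly as in the proof of Theorem~\ref{thm:Gbifurcation}, then identifies critical points of $\mathfrak{f}_\lambda|_\mathcal{S}$ near $x_\lambda$ with critical points of $\mathfrak{f}_\lambda$. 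Differentiating the $G$-invariance identity at a critical point gives $T_{x_\lambda}(G\cdot x_\lambda)\subset \ker \mathrm{d}(\mathfrak{d}\mathfrak{f}_\lambda)_{x_\lambda}$; since $T_{x_\lambda}\mathcal{S}$ is a $G_0$-invariant complement to this orbit-tangent subspace, it follows that $N_\lambda \subset T_{x_\lambda}\mathcal{S}$ and that the negative isotropy representation $\pi_\lambda^-$ is canonically isomorphic, as a $G_0$-representation, to the analogous generalized negative eigenspace representation of the Hessian of $\mathfrak{f}_\lambda|_\mathcal{S}$ at $x_\lambda$. Moreover, hypothesis (c) (retained from Theorem~\ref{thm:Gbifurcation}) forces the endpoints $x_a$ and $x_b$ to be $G_0$-nondegenerate critical points of $\mathfrak{f}_a|_\mathcal{S}$ and $\mathfrak{f}_b|_\mathcal{S}$, respectively.

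With these identifications, hypothesis (d') says precisely that the negative eigenspace $G_0$-representations of the reduced Hessians at $\lambda=a$ and $\lambda=b$ are inequivalent, which is exactly the hypothesis of the nonlinear formulation of \cite[Thm~3.3]{SmoWas} applied to the reduced problem on $\mathcal{S}$. This yields a bifurcation instant $\lambda_*\in\left]a,b\right[$ on the slice, which by the natural-constraint property produces equivariant bifurcation of the original family on $\mathcal{M}$. The main obstacle I anticipate is the gauging and isotropy bookkeeping in the reduction to $\mathcal{S}$ — specifically, verifying that the gauge can be chosen so that the family of isotropy subgroups stays literally equal to $G_0$ rather than merely conjugate to it — together with the transition from the Hilbert-space statement of \cite[Thm~3.3]{SmoWas} to the Banach setup $B_2\subset B_0\subset H$ uniformly in $\lambda$; both of these are handled by the same machinery already invoked in the proof of Theorem~\ref{thm:Gbifurcation} and summarized in Appendix~\ref{app:A}.
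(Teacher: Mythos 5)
Your proposal is correct and takes essentially the same route as the paper: the paper's own proof consists of a single sentence observing that the argument of Theorem~\ref{thm:Gbifurcation} carries over verbatim, simply replacing the invocation of \cite[Thm~2.1]{SmoWas} by that of \cite[Thm~3.3]{SmoWas} in the nonlinear formulation of Appendix~\ref{app:A}. Your fuller unpacking of the slice reduction, the natural-constraint property, and the identification of the negative isotropy representation with that of the reduced Hessian is all content the paper treats as already established in the proof of Theorem~\ref{thm:Gbifurcation}, so you are not diverging from the paper, merely spelling it out.
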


\begin{proof}
The same proof of Theorem~\ref{thm:Gbifurcation} applies, using \cite[Thm 3.3]{SmoWas}, in its nonlinear formulation (explained in Appendix~\ref{app:A}), to obtain the conclusion.
\end{proof}
\end{section}

\begin{rem}
All the results stated above carry over \emph{verbatim} to the case of local affine actions, using the same standard procedures mentioned before.
\end{rem}

\begin{section}{Geometric applications on CMC hypersurfaces}
\label{sec:geomappl}

In this section, we apply our abstract equivariant bifurcation results (Theorems~\ref{thm:Gbifurcation} and \ref{thm:Gbifurcation2}) to the geometric variational problem of \emph{constant mean curvature (CMC)} embeddings. Bifurcation phenomena for $1$-parameter families of CMC embeddings have been studied in the last years by several authors, see, e.g., \cite{AliPic11,bp1,KoiPalPic2011,LimLimPic2011}. We will state and prove general bifurcation results for CMC embeddings (Theorems~\ref{thm:MorseCMCbif} and \ref{thm:equivCMCbifurcation}) and discuss how some explicit bifurcation examples can be reobtained from these general results.

\subsection{Variational setup}
The problem of finding constant mean curvature $H$ embeddings of a compact $m$-dimensional manifold $M$ into a complete Riemannian manifold $(\overline M,\overline g)$ with $\mathrm{dim}(\overline M)=m+1$ is equivalent to finding critical points of the area functional with a fixed volume constraint, where $H$ is the Lagrange multiplier (which will play the role of the parameter $\lambda$). More precisely, assume for simplicity that $M$ and $\overline M$ are oriented, and consider the $1$-parameter family of functionals $(\mathfrak f_H)_H$ given by
\begin{equation}\label{eq:fh}
\mathfrak f_H(x)=\Area(x)+mH\,\Vol(x),
\end{equation}
where $x\colon M\to\overline M$ is an embedding, $\Area(x)=\int_M \vol_{x^*(\overline g)}$ is the volume of $x(M)\subset\overline M$, $\vol_{x^*(\overline g)}$ is the volume form of the pull-back metric $x^*(\overline g)$ and $\Vol(x)$ is the \emph{volume enclosed}\footnote{This notion will be clarified by the end of this subsection. For now, one may assume for simplicity that $x(M)=\partial\Omega$ is the boundary of an open bounded region $\Omega\subset\overline M$, and then $\Vol(x)=\int_\Omega \vol_{\overline g}$ is the volume of this enclosed region.} by $x(M)$. Then $x\colon M\to\overline M$ is a critical point of $\mathfrak f_H$ if and only if it is an embedding of constant mean curvature $H$, see \cite{BardoC, BardoCEsc}. As we will see later, a convenient regularity assumption is that $\mathfrak f_H$ acts on the space of H\"older $\mathcal C^{2,\alpha}$ embeddings.

More precisely, assuming that the embedding $x$ is transversely oriented (i.e., the normal bundle to $x$ is oriented), we may parameterize embeddings close to $x$ by functions on $M$ using the normal exponential map. An embedding $x_f\colon M\to\overline M$ that is $\mathcal C^{2,\alpha}$-close to $x$ can be written as
\begin{equation}
x_f(p)=\exp^\perp_{x(p)}\big(f(p)\,N_x(p)\big), \quad p\in M,
\end{equation}
where $\exp^\perp$ is the normal exponential map of $x(M)\subset\overline M$ and $N_x$ is a unit normal vector field along $x$.
We thus identify $x_f$ with the function $f\in\mathcal C^{2,\alpha}(M)$, which is close to zero. This also gives an identification of the tangent space at $x$ to the space of $\mathcal C^{2,\alpha}$ embeddings (which is formed by normal vector fields along $x$) with the Banach space $\mathcal C^{2,\alpha}(M)$. With this identification, the first variation formula for \eqref{eq:fh} is given by
\begin{equation}\label{eq:firstvar}
\dd\mathfrak f_H(x)(f)=\int_M m\big(H-\mathcal H(x)\big) f\,\vol_{x^*(\overline g)}, \quad f\in\mathcal C^{2,\alpha}(M),
\end{equation}
where $\mathcal H(x)$ is the mean curvature function of the embedding $x$. From \eqref{eq:firstvar}, it follows that $x$ is a critical point of $\mathfrak f_H$ if and only if $\mathcal H(x)=H$ (i.e., $x$ has constant mean curvature $H$), as we claimed above.

We will also need to consider the second variation of \eqref{eq:fh} at a critical point $x$, which under the above identifications, is the symmetric bilinear form on $\mathcal C^{2,\alpha}(M)$ given by
\begin{equation}\label{eq:secondvar}
\dd^2\mathfrak f_H(x)(f_1,f_2)=-\int_M J_x(f_1)f_2 \,\vol_{x^*(\overline g)}, \quad f_1,f_2\in\mathcal C^{2,\alpha}(M),
\end{equation}
where $J_x$ is the Jacobi operator
\begin{equation}\label{eq:JacobioperatorCMC}
J_x=\Delta_x+\| A_x\|^2+m\,\mathrm{Ric}_{\overline M}(N_x),
\end{equation}
where $\Delta_x$ is the Laplacian of the pull-back metric $x^*(g)$ on $M$, $\|A_x\|$ is the norm of the second fundamental form of $x$, $\mathrm{Ric}_{\overline M}$ is the (normalized) Ricci curvature of the ambient space $(\overline M,\overline g)$ and $N_x$ is a unit normal field along $x$. Functions $f$ in the kernel of $J_x$ are called \emph{Jacobi fields} along $x$. The number of negative eigenvalues of $J_x$ (counted with multiplicity) is the \emph{Morse index} of $x$, that we denote $\mathrm i_{\mathrm{Morse}}(x)$.

The ambient isometry group $G=\Iso(\overline M,\overline g)$ acts on the space of embeddings, and composing a CMC embedding with an element of $G$ trivially gives rise to a new CMC embedding. Recall that from the Myers-Steenrod Theorem, $G$ is a Lie group, and is compact if $\overline M$ is compact (see \cite{Koba}). In addition, since $(\overline M,\overline g)$ is complete, the Lie algebra of $G$ is identified with the space of Killing vector fields of $(\overline M,\overline g)$. We are interested in \emph{$G$-equivariant} bifurcation of CMC embeddings, i.e., getting new embeddings that are not merely obtained by composing a pre-existing one with an isometry of the ambient manifold. Another way in which one could trivially obtain a new CMC embedding is by reparameterizing it, i.e., composing on the right with a diffeomorphism of $M$. Two CMC embeddings $x_i\colon M\to\overline M$, $i=1,2$, are said to be \emph{isometrically congruent} if there exists a diffeomorphism $\phi$ of $M$ and an isometry $\psi$ of $(\overline M,\overline g)$ such that $x_2=\psi\circ x_1\circ\phi$.

Infinitesimally, the action of $G$ provides some trivial Jacobi fields along any critical point. Namely, if $K$ is a Killing vector field of $(\overline M,\overline g)$, then $f=\overline g(K,N_x)$ is a Jacobi field along $x$. Denote by $\mathrm{Jac}_x$ the (finite-dimensional) vector space of Jacobi fields along $x$, and by $\mathrm{Jac}^K_x$ the subspace of $\mathrm{Jac}_x$ spanned by the functions $\overline g(K,N_x)$, where $K$ is a Killing vector field of $(\overline M,\overline g)$. The CMC embedding $x$ will be called \emph{nondegenerate} if $\mathrm{Jac}^K_x=\mathrm{Jac}_x$, i.e., if every Jacobi field along $x$ arises from a Killing field of the ambient space.

It is natural to expect that, with the above equivariant notion of nondegeneracy, an equivariant implicit function theorem should hold. Indeed, the following is proved in~\cite[Prop 4.1]{BPS1}.

\begin{teo}\label{thm:abstractIFT}
Let $x\colon M\to\overline M$ be a nondegenerate CMC embedding, with mean curvature equal to $H_0$. Then, there exists $\varepsilon>0$ and
a smooth map $$\left]H_0-\varepsilon,H_0+\varepsilon\right[\ni H\longmapsto x_H\in\mathcal C^{2,\alpha}(M,\overline M),$$ such that for all $H$, $x_H\colon M\to\overline M$ is a CMC embedding of mean curvature $H$ and
\begin{itemize}
\item[(a)] $x_{H_0}=x$;\smallskip
\item[(b)] if $y\colon M\to\overline M$ is a CMC embedding sufficiently close to $x$ in the $\mathcal C^{2,\alpha}$-topology, then there exists
$H\in\left]H_0-\varepsilon,H_0+\varepsilon\right[$ such that $y$ is isometrically congruent to $x_H$.
\end{itemize}
\end{teo}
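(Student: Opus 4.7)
The plan is to apply the slice theorem (Theorem~\ref{thm:existenceofslicecompact}) to reduce the CMC problem to a nondegenerate parametric critical point problem on a slice, where a standard implicit function theorem applies.

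First I would verify the hypotheses of the slice theorem for the left-composition action of $G=\Iso(\overline M,\overline g)$ on $\mathcal M=\mathcal C^{2,\alpha}(M,\overline M)$. Each isometry is affine for the Levi-Civita connection, so Corollary~\ref{thm:corrightcomp} implies the induced action on $\mathcal M$ is affine with respect to the connection of Subsection~\ref{sub:naturalconnection}. The isotropy $G_x$ is compact, being the pointwise stabilizer of the compact subset $x(M)\subset\overline M$ in the isometry group of a complete Riemannian manifold. Theorem~\ref{thm:existenceofslicecompact} then yields a slice $\mathcal S\subset\mathcal M$ through $x$ whose tangent space $T_x\mathcal S$ is a closed $G_x$-invariant complement to the orbit direction $T_x(G\cdot x)$. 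Identifying Killing fields $X$ with their normal components $\overline g(X,N_x)$ identifies $T_x(G\cdot x)$ with $\mathrm{Jac}_x^K$, and the nondegeneracy hypothesis $\mathrm{Jac}_x=\mathrm{Jac}_x^K$ gives $\mathrm{Ker}(J_x)=T_x(G\cdot x)$.

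Second, the slice is a \emph{natural constraint} for $\mathfrak f_H$: for $y\in\mathcal S$ close to $x$, the transversality $T_y\mathcal M=T_y\mathcal S+T_y(G\cdot y)$ persists, while the $G$-invariance of $\mathfrak f_H$ forces $\mathrm d\mathfrak f_H(y)$ to vanish on $T_y(G\cdot y)$; hence any critical point of $\mathfrak f_H\vert_\mathcal S$ near $x$ is automatically a critical point of $\mathfrak f_H$ on $\mathcal M$, i.e., a CMC embedding of mean curvature $H$. Parametrizing $\mathcal S$ near $x$ via the normal exponential $f\mapsto x_f$ with $f\in T_x\mathcal S$, I would apply a parametric implicit function theorem to the $L^2$-gradient
\[\Phi(f,H)=\mathfrak d\mathfrak f_H(x_f)=m\bigl(H-\mathcal H(x_f)\bigr).\]
Its linearization $\partial_f\Phi(0,H_0)=-J_x\vert_{T_x\mathcal S}$ is an injective Fredholm operator of index zero, restricting to a Banach isomorphism of $T_x\mathcal S$ onto the closed subspace $F:=\mathrm{Image}(J_x)\subset\mathcal C^{0,\alpha}(M)$. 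This produces a smooth curve $H\mapsto x_H\in\mathcal S$ of CMC embeddings with $x_{H_0}=x$. For part (b), any nearby CMC embedding $y$ decomposes via the slice as $y=g\cdot y'$ with $g\in G$ near $1$ and $y'\in\mathcal S$; since $g$ is an isometry, $y'$ has the same mean curvature $H$ as $y$, so $y'$ is a critical point of $\mathfrak f_H\vert_\mathcal S$ near $x$, and by IFT uniqueness $y'=x_H$, whence $y=g\cdot x_H$ is isometrically congruent to $x_H$.

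The main obstacle is making the Banach implicit function theorem precise in the previous step, since $\partial_f\Phi(0,H_0)$ is an isomorphism only onto the proper closed subspace $F\subsetneq\mathcal C^{0,\alpha}(M)$, with cokernel $K=\mathrm{Jac}_x^K$ of dimension $\dim G-\dim G_x$. The resolution uses the $G$-invariance of $\mathfrak f_H$ in the form of the Killing field orthogonality $\Phi(y,H)\perp_{L^2_y}K_y$ for every $y$, valid because $\mathrm d\mathfrak f_H(y)(X\vert_{y(M)})=0$ for every Killing field $X$; this cuts the effective codomain down by exactly $\dim K$. Concretely, a Lyapunov--Schmidt reduction solves $\pi_F\Phi(f,H)=0$ by standard IFT to obtain $f(H)$, after which the residual $\pi_K\Phi(f(H),H)$ simultaneously lies in $K$ (by construction) and in the $L^2_{x_{f(H)}}$-orthogonal complement of $K_{x_{f(H)}}$ (by Killing orthogonality); this intersection is trivial for $H$ close to $H_0$ by continuity, since at $f=0$ it reduces to $K\cap K^{\perp_{L^2_x}}=\{0\}$.
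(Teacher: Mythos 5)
First, a point about the paper itself: Theorem~\ref{thm:abstractIFT} is not proved in this paper. The text introducing it says ``Indeed, the following is proved in \cite[Prop 4.1]{BPS1},'' so there is no internal proof to compare against. Your task was therefore to supply a proof from scratch, and the strategy you chose (slice theorem $+$ natural constraint $+$ Lyapunov--Schmidt using Killing orthogonality) is a plausible route that is broadly consistent with the framework of Section~\ref{sec:geomappl}. However, there is a genuine gap in the way you set up the spaces.

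You take a slice for the $G$-action on $\mathcal M=\mathcal C^{2,\alpha}(M,\overline M)$, but the theorem is about \emph{unparameterized} embeddings: the conclusion in part (b) allows post-composition by an isometry \emph{and} pre-composition by a diffeomorphism of $M$, and the nondegeneracy hypothesis ($\mathrm{Jac}_x=\mathrm{Jac}^K_x$) lives in the quotient $\mathcal E(M,\overline M)=\mathrm{Emb}^{2,\alpha}(M,\overline M)/\Diff(M)$. Working in $\mathcal M$ without quotienting by $\Diff(M)$ creates several concrete problems. (i) The isotropy of $x$ in $\mathcal M$ under the $G$-action is $\{g\in G: g\circ x=x\}$, which is the \emph{pointwise} stabilizer of $x(M)$ and is typically trivial, not the compact set stabilizer $G_x$ used in the paper; the group you need is the one fixing $x(M)$ as a set, which only becomes the isotropy after dividing out $\Diff(M)$. (ii) Accordingly, the map ``take normal component'' from $T_x(G\cdot x)\subset T_x\mathcal M$ onto $\mathrm{Jac}^K_x$ has nontrivial kernel (Killing fields tangent to $x(M)$ correspond to reparameterizations), so your identification of the orbit tangent with $\mathrm{Jac}^K_x$ is not an isomorphism in $\mathcal M$; it is one only in $\mathcal E(M,\overline M)$, under the identification \eqref{eq:idtxe}. (iii) The slice produced by Theorem~\ref{thm:existenceofslicecompact} is $\exp_x(\widetilde U_0\cap S)$ for the \emph{connection} exponential on $\mathcal C^{2,\alpha}(M,\overline M)$, whose tangent space is a codimension-$\dim(G\cdot x)$ subspace of $\mathcal C^{2,\alpha}(M,T\overline M|_x)$ containing tangential directions; your parametrization $f\mapsto x_f=\exp^\perp(fN_x)$ with a scalar $f\in\mathcal C^{2,\alpha}(M)$ parametrizes only \emph{normal} variations, i.e., it is a chart for $\mathcal E(M,\overline M)$, not for your slice in $\mathcal M$. (iv) Consequently, the uniqueness you extract from the IFT on your slice gives $y=g\cdot x_H$ with $g\in G$, but not isometric congruence in the sense of the statement, which also quotients out $\Diff(M)$; your $y'$ and $x_H$ could a priori differ by a reparameterization that your slice does not absorb.

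The fix is to run the whole argument in $\mathcal E(M,\overline M)$, as the paper does for Theorem~\ref{thm:MorseCMCbif}: use the topological atlas from \cite{AliPic10} with tangent spaces identified with $\mathcal C^{2,\alpha}(M)$, the $G$-invariant connection descended via Proposition~\ref{thm:invdiffeos}, and the fact that the (local) $G$-action is differentiable at $[x]$ for smooth $x$. Then $G_{[x]}$ is the compact set stabilizer, $T_{[x]}(G\cdot[x])\cong\mathrm{Jac}^K_x$, $\ker J_x=\mathrm{Jac}_x=\mathrm{Jac}^K_x$ by nondegeneracy, and your Lyapunov--Schmidt step with the Killing-orthogonality constraint $\mathfrak d\mathfrak f_H(y)\perp_{L^2}\mathrm{Jac}^K_y$ becomes the correct finishing move. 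Once this relocation is made, your second and third paragraphs are essentially sound, and in particular the observation that the Fredholm index-zero linearization restricted to the slice is an isomorphism onto $\mathrm{Image}(J_x)$, with the residual finite-dimensional equation killed by equivariance, is the right idea.
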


\subsection{A few technicalities}\label{subsec:tech}
Let us now deal with some technicalities we omitted in the above explanation of the variational setup for the CMC problem.

First, we need to give a more precise definition of the space where \eqref{eq:fh} is defined. For reasons\footnote{This choice has to do with the nature of the second variation of $\mathfrak f_H$, which we will want to be a Fredholm operator under the appropriate identification.} that will later be clear, it is convenient to consider the space of embeddings $x\colon M\to\overline M$ endowed with a $\mathcal C^{2,\alpha}$-topology. More precisely, consider the set $\mathrm{Emb}^{2,\alpha}(M,\overline M)$ of embeddings of class $\mathcal C^{2,\alpha}$ of $M$ into $\overline M$. This is an open subset of the Banach manifold $\mathcal C^{2,\alpha}(M,\overline M)$, and hence inherits a natural differential structure, becoming a Banach manifold. Since we want to identify embeddings obtained by reparameterizations of a given embedding, we have to consider the action of the group $\Diff(M)$ of diffeomorphisms of $M$ by right-composition on $\mathrm{Emb}^{2,\alpha}(M,\overline M)$. We denote the orbit space of this action by
\begin{equation}\label{eq:emm}
\mathcal E(M,\overline M)=\mathrm{Emb}^{2,\alpha}(M,\overline M)/\Diff(M),
\end{equation}
and its elements are called \emph{unparameterized embeddings}. This set has the structure of an infinite-dimensional \emph{topological} manifold modeled on the Banach space $\mathcal C^{2,\alpha}(M)$. Its geometry and local differential structure are studied in detail in \cite{AliPic10}. Given $x\in\mathcal C^{2,\alpha}(M,\overline M)$, we denote by $[x]$ its class in $\mathcal E(M,\overline M)$. Henceforth, we are assuming for simplicity that $x(M)$ is transversely oriented in $\overline M$.

If we take $x\in\mathrm{Emb}^{2,\alpha}(M,\overline M)$ in the dense subset of \emph{smooth} embeddings, there exists an open neighborhood of $[x]$ in $\mathcal E(M,\overline M)$ and a bijection from this neighborhood to a neighborhood of the origin of $\mathcal C^{2,\alpha}(M)$, whose image is identified (using the normal exponential map) with $\mathcal C^{2,\alpha}$ embeddings equivalent to $x$ under the action of $\Diff(M)$. As $x$ runs in the set of smooth embeddings, those maps form an atlas for $\mathcal E(M,\overline M)$ whose charts are continuously compatible. Moreover, if a smooth functional defined in $\mathrm{Emb}^{2,\alpha}(M,\overline M)$ is invariant under $\Diff(M)$, then the induced functional in $\mathcal E(M,\overline M)$ is smooth\footnote{As a map from a neighborhood of the origin in $\mathcal C^{2,\alpha}(M)$ to $\mathds R$.} in every local chart. Using these charts, we also have an identification
\begin{equation}\label{eq:idtxe}
T_{[x]}\mathcal E(M,\overline M)\cong\mathcal C^{2,\alpha}(M)
\end{equation}
of this tangent space with the Banach space of (real-valued) $\mathcal C^{2,\alpha}$ functions on $M$. For more details on this standard construction, see \cite{AliPic10}.

Second, note that, if $x\colon M\to\overline M$ is an embedding, unless $x(M)\subset\overline M$ is the boundary of a bounded open set of $\overline M$, then the \emph{enclosed volume} $\Vol(x)$ is not well-defined. Moreover, it is not clear that such quantity should be invariant under the action of $G$. To overcome these problems, we recall the notion of \emph{invariant volume functionals} for embeddings of $M$ into $\overline M$ developed in \cite[Appendix B]{BPS1}.

\begin{defin}\label{thm:definvariantvolumes}
Let $\mathcal U\subset\mathcal C^{2,\alpha}(M,\overline M)$ be an open subset of embeddings $x\colon M\to\overline M$. An \emph{invariant volume functional} on $\mathcal U$
is a real valued function $\Vol\colon\mathcal U\to\mathds R$ satisfying:
\begin{itemize}
\item[(a)] $\Vol(x)=\int_Mx^*(\eta)$, where $\eta$ is an $m$-form defined on an open subset $U\subset\overline M$ such that
$\mathrm d\eta=\mathrm{vol}_{\overline g}$ is the volume form of $\overline g$ in $U$;
\smallskip

\item[(b)] given $x\in\mathcal U$, for all $\phi\in\Iso(\overline M,\overline g)$ sufficiently close to the identity, $\Vol(\phi\circ x)=\Vol(x)$.
\end{itemize}
If $M$ has boundary, the invariance property (b) is required to hold only for isometries $\phi$ near the identity that preserve
$x(\partial M)$, i.e., $\phi\big(x(\partial M)\big)=x(\partial M)$. An embedding will be called \emph{regular} if it is contained in some open set $\mathcal U$ of $\mathcal C^{2,\alpha}(M,\overline M)$ which is the domain of some invariant volume functional.
\end{defin}

\begin{example}
If $x(M)$ is the boundary of a bounded open subset of $\overline M$, then $x$ is regular.
If $\overline M$ is non compact, and $\mathrm{Iso}(\overline M,\overline g)$
is compact, then every  embedding into $\overline M$ is regular.
If $x\colon M\to\overline M$ has image contained in some open subset $U\subset\overline M$ whose $m$-th
de Rham cohomology vanishes, then $x$ is regular. In particular,
if $\overline M=\mathds R^{m+1}$ or $\overline M=\mathds S^{m+1}$, then every embedding into $\overline M$ is regular, see \cite[Ex 5]{BPS1}.
\end{example}

Third, when considering an invariant volume functional as above (defined in a neighborhood of a given embedding), the left-composition action of $\Iso(\overline M,\overline g)$ has to be restricted to this domain, giving rise to a local action. As remarked above, standard techniques apply to have the necessary results also in the case of local actions.

With the above considerations on the (topological) manifold $\mathcal E(M,\overline M)$ of unparameterized embeddings of class $\mathcal C^{2,\alpha}$ and the local existence of an invariant volume functional, we may study the CMC variational problem in this precise global analytical setup. The functional \eqref{eq:fh} is then well-defined and smooth in a neighborhood of a smooth unparameterized regular embedding, and formulas \eqref{eq:firstvar} and \eqref{eq:secondvar} hold with the appropriate identifications above mentioned.

\subsection{Equivariant bifurcation using Morse index}

We will now use our abstract equivariant bifurcation result to obtain a bifurcation result for CMC embeddings when there is a jump of the Morse index. Let us recall some basic terminology. Assume that $[a,b]\ni r\mapsto x_r\in\mathcal C^{2,\alpha}(M,\overline M)$ is a continuous family of CMC embeddings of $M$ into $\overline M$ (which already implies that $x_r\colon M\to\overline M$ is smooth\footnote{It is well-known that CMC hypersurfaces are the solution to a quasilinear elliptic PDE, hence smoothness follows from usual elliptic regularity theory.}) and let $H_r$ denote the value of the mean curvature of $x_r$. An instant $r_*\in\left]a,b\right[$ is a \emph{bifurcation instant} for the family $(x_r)_{r\in[a,b]}$ if there exists a sequence $r_n$ in $[a,b]$ tending to $r_*$ as $n\to\infty$ and a sequence $x_n$ of CMC embeddings of $M$ into $\overline M$, with the mean curvature of $x_n$ equal to $H_{r_n}$, such that $x_n$ tends to $x_{r_*}$ in $\mathcal C^{2,\alpha}(M,\overline M)$ as $n\to\infty$ and for every $n$, $x_n$ is \emph{not} isometrically congruent to $x_{r_n}$.

Given a CMC embedding $x\colon M\to\overline M$, let $G_x$ denote the closed subgroup of
$\mathrm{Iso}(\overline M,\overline g)$ consisting of isometries $\psi$ that leave $x(M)$ invariant, i.e., such that $\psi\big(x(M)\big)\subset x(M)$. In other words, $G_x$ is the \emph{isotropy} of $x$ under the action of $G$. Since $M$ is compact and the action of $G$ is proper, $G_x$ is compact. The Lie algebra $\mathfrak g_x$ of $G_x$ is identified with the space of Killing vector fields of $(\overline M,\overline g)$ that are everywhere tangent to $x(M)$. The codimension of $G_x$ in $G$ is equal to the dimension of $\mathrm{Jac}^K_x$.

\begin{teo}\label{thm:MorseCMCbif}
Let $[a,b]\ni r\mapsto x_r\in\mathcal C^{2,\alpha}(M,\overline M)$ be a $\mathcal C^1$-map, where $x_r\colon M\to\overline M$ is a regular CMC embedding
for all $r$, having mean curvature equal to $H_r$. Let $r_*\in\left]a,b\right[$ be an instant where
\begin{itemize}
\item[(a)] the derivative $H'_{r_*}$ of the map $[a,b]\ni r\mapsto H_r\in\mathds R$ at $r_*$ is nonzero;
\smallskip

\item[(b)] for $\varepsilon>0$ small enough:
\smallskip

\begin{itemize}
\item[(b1)] $x_{r_*-\varepsilon}$ and $x_{r_*+\varepsilon}$ are nondegenerate;
\smallskip

\item[(b2)] the identity connected component $G^0_r$ of the isotropy $G_{x_r}$ does not depend on
$r$, for $r\in\left[r_*-\varepsilon,r_*+\varepsilon\right]$;
\smallskip

\item[(b3)] $\mathrm i_\mathrm{Morse}(x_{r_*-\varepsilon})\ne\mathrm i_\mathrm{Morse}(x_{r_*+\varepsilon})$.
\end{itemize}
\end{itemize}
Then, $r_*$ is a bifurcation instant for the family $(x_r)_r$.
\end{teo}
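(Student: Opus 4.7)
The plan is to reduce Theorem~\ref{thm:MorseCMCbif} to the abstract bifurcation result Theorem~\ref{thm:Gbifurcation}, by casting the CMC variational problem in the framework of Section~\ref{sec:mainbif}. First, assumption~(a) together with the one-dimensional inverse function theorem lets us reparametrize the family by the mean curvature, yielding a $\mathcal C^1$-family $\lambda \mapsto x_\lambda$ with $\mathcal H(x_\lambda)=\lambda$ for $\lambda$ in a small interval around $H_{r_*}$. We take $\mathcal M = \mathcal E(M,\overline M)$ (Subsection~\ref{subsec:tech}) with the local left-composition action of $G = \mathrm{Iso}(\overline M,\overline g)$, and functionals $\mathfrak f_\lambda$ as in \eqref{eq:fh}, using a local invariant volume functional (available by regularity of $x_{r_*}$). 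Equip $\mathcal C^{2,\alpha}(M,\overline M)$ with the natural connection associated to the Levi-Civita connection of $\overline g$ (Subsection~\ref{sub:naturalconnection}); by Corollary~\ref{thm:corrightcomp} this connection is invariant under left-composition by isometries, so the $G$-action is affine, and by Proposition~\ref{thm:invdiffeos} it descends compatibly to $\mathcal E(M,\overline M)$. With $B_2=\mathcal C^{2,\alpha}(M)$, $B_0=\mathcal C^{0,\alpha}(M)$, and auxiliary Hilbert space $L^2(M)$, the first variation \eqref{eq:firstvar} identifies the gradient as $\mathfrak d\mathfrak f_\lambda(x)=m(\lambda-\mathcal H(x))\in B_0$, satisfying~\eqref{eq:defdstranof}.

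It then remains to verify the four hypotheses of Theorem~\ref{thm:Gbifurcation}. For~(a), the differential $\mathrm d(\mathfrak d\mathfrak f_\lambda)_{x_\lambda}$ is (up to a sign) the Jacobi operator \eqref{eq:JacobioperatorCMC}, which has the form ``Laplacian plus multiplication by a smooth function''; it is a continuous path of Fredholm operators $B_2\to B_0$ that are essentially positive in the sense of Remark~\ref{thm:fredhassumption}, so the $N_\lambda(\varepsilon)$ have uniformly finite dimension. For~(b), assumption~(b2) fixes the identity component $G^0_{x_r}$ of the isotropy on $[r_*-\varepsilon,r_*+\varepsilon]$; after further shrinking $\varepsilon$ to exclude the generically isolated instants where extra discrete symmetries might appear, the full compact isotropy $G_0 := G_{x_r}$ is constant, and nice by Example~\ref{ex:nice} (its identity component is connected, and the component group is small). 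For~(c), identifying $T_{x_\lambda}\mathcal M\cong\mathcal C^{2,\alpha}(M)$ via the normal exponential, one has $\ker J_{x_\lambda}=\mathrm{Jac}_{x_\lambda}$ and $T_{x_\lambda}(G\cdot x_\lambda)=\mathrm{Jac}^K_{x_\lambda}$, so (b1) is exactly condition~(c). For~(d), $\dim N_\lambda=\mathrm i_{\mathrm{Morse}}(x_\lambda)$, so (b3) is exactly condition~(d).

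Applying Theorem~\ref{thm:Gbifurcation} then produces a sequence $(x_n,\lambda_n)$ of critical points with $\lambda_n\to H_{r_*}$, $x_n\to x_{r_*}$, and $x_n\notin G\cdot x_{\lambda_n}$; pulling back through the reparametrization of Step~1 exhibits $r_*$ as a bifurcation instant. The main technical obstacle is the passage from the identity-component constancy~(b2) to the full isotropy constancy demanded by hypothesis~(b) of Theorem~\ref{thm:Gbifurcation}. The cleanest workaround is to first invoke the slice theorem (Theorem~\ref{thm:existenceofslicecompact}) at $x_{r_*}$ with the full compact isotropy $G_{x_{r_*}}$: since each $\mathfrak f_\lambda$ is $G$-invariant and the slice $\mathcal S$ meets every nearby orbit transversely, $\mathcal S$ is a natural constraint, and the nonlinear Smoller--Wasserman theorem of Appendix~\ref{app:A} can be applied on $\mathcal S$ under the $G_{x_{r_*}}$-action. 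By the defining property of a slice, bifurcating critical points inside $\mathcal S$ lie in distinct $G$-orbits of the ambient space, hence are not isometrically congruent to $x_{r_n}$, completing the argument.
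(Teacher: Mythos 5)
Your proposal follows essentially the same route as the paper: reparametrize by mean curvature using hypothesis~(a), realize the problem on $\mathcal E(M,\overline M)$ with the natural connection from the Levi-Civita connection of $\overline g$, identify the Jacobi operator as (minus) the linearization of $\mathfrak d\mathfrak f_\lambda$, and invoke Theorem~\ref{thm:Gbifurcation}. Two points deserve comment.

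First, a small technical slip: your expression $\mathfrak d\mathfrak f_\lambda(x)=m\big(\lambda-\mathcal H(x)\big)$ omits the density factor $\psi_x$ from \eqref{eq:dstranoCMC}. Since the $L^2$-pairing in \eqref{eq:defdstranof} is taken with respect to a fixed volume form $\nu=\vol_{x_{r_*}^*(\overline g)}$, whereas the first-variation formula \eqref{eq:firstvar} uses $\vol_{x^*(\overline g)}$, one must set $\mathfrak d\mathfrak f_H(x)=m\big(H-\mathcal H(x)\big)\psi_x$ with $\psi_x\,\vol_{\nu}=\vol_{x^*(\overline g)}$. Because $\psi_{x_{r_*}}\equiv 1$ and $H-\mathcal H(x_{r_*})=0$, this does not affect the linearization at $x_{r_*}$, so the conclusion survives, but the gradient-type map should be stated correctly.

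Second, and more substantively: you are right to flag that hypothesis~(b2) only controls the identity component $G^0_r$ of the isotropy, not the full isotropy, which is what hypothesis~(b) of Theorem~\ref{thm:Gbifurcation} nominally asks for. However, your proposed repairs do not hold up. Shrinking $\varepsilon$ ``to exclude generically isolated instants where extra discrete symmetries appear'' is not justified: nothing in the hypotheses implies the discrete part of the isotropy is generic, isolated, or small, so one cannot conclude the full isotropy is constant or even nice. Your ``cleanest workaround,'' applying the slice theorem with the full $G_{x_{r_*}}$ and then Smoller--Wasserman on the slice, still requires $G_{x_{r_*}}$ to be nice, which the hypotheses do not provide. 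The paper's own proof simply takes $G_0=G^0_r$ (noting that it is nice because connected) and applies Theorem~\ref{thm:Gbifurcation} with that group directly; this is cleaner than your patch, since no genericity argument is needed, although it implicitly treats $G^0_r$ as the isotropy to be used in the abstract theorem. In short, the obstacle you identified is real, but the remedies you offer do not work; the paper's route (restrict attention to the connected isotropy component from the start) is the intended one.
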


\begin{proof}
We first verify that the CMC variational problem satisfies the required conditions and then use Theorem~\ref{thm:Gbifurcation} to obtain the conclusion. In the notation of Section~\ref{sec:mainbif}, we have $B_2=\mathcal C^{2,\alpha}(M)$, $B_0=\mathcal C^{0,\alpha}(M)$ and $H=L^2(M,\nu)$, where $\nu$ is an arbitrarily fixed volume form (or density) on $M$. It will be convenient to choose $\nu$ to be the volume form of the pull-back metric $x_{r_*}^*(\overline g)$.

Let $\overline\nabla$ be the Levi-Civita connection of $(\overline M,\overline g)$. Using this connection, one can define an associated natural connection on $\mathrm{Emb}^{2,\alpha}(M,\overline M)$, as in Example~\ref{exa:naturalconnection}. This connection is defined on the entire manifold $\mathcal C^{2,\alpha}(M,\overline M)$, and is characterized by the fact that the evaluation maps $\ev_p\colon\mathcal C^k(M,\overline M)\to\overline M$, $p\in M$, are affine (Proposition~\ref{thm:univproperty}).\footnote{An explicit description of the horizontal distribution of this connection is given as follows. The tangent bundle of $\mathcal C^{2,\alpha}(M,\overline M)$ can be naturally identified with $\mathcal C^{2,\alpha}(M,T\overline M)$; an element of $\mathcal C^{2,\alpha}(M,T\overline M)$ is a map of class $\mathcal C^{2,\alpha}$ from $M$ to $T\overline M$, which is a vector field of class $\mathcal C^{2,\alpha}$ in $\overline M$ along some function $f\colon M\to\overline M$ of class $\mathcal C^{2,\alpha}$. The tangent space to $\mathcal C^{2,\alpha}(M,T\overline M)$ at the point $X$ is the space of vector fields of class $\mathcal C^{2,\alpha}$ in $T\overline M$ along $X$, i.e., maps $\eta\colon M\to T(T\overline M)$ of class $\mathcal C^{2,\alpha}$ such that $\eta(p)$ is a tangent vector to $T\overline M$ at the point $X(p)$, for all $p\in M$. The vertical subspace is given by those $\eta$'s such that $\eta(p)$ is vertical, for all $x\in M$. The horizontal subspace is the space of maps $\eta$ such that $\eta(p)$ is horizontal relatively to the connection $\overline\nabla$ of $\overline M$ for all $p\in M$.}

Let $G$ be the identity connected component of $\Iso(\overline M,\overline g)$, which is a (finite-dimensional) Lie group,
and consider the smooth action of $G$ by left-composition on $\mathcal C^{2,\alpha}(M,\overline M)$.
Clearly, $\mathrm{Emb}^{2,\alpha}(M,\overline M)$ is invariant by left-compositions with diffeomorphisms
of $\overline M$, so we have an induced action of $G$ on $\mathrm{Emb}^{2,\alpha}(M,\overline M)$.
Since isometries preserve the Levi-Civita connection, the actions of $G$ on both $\mathcal C^{2,\alpha}(M,\overline M)$
and $\mathrm{Emb}^{2,\alpha}(M,\overline M)$ are by affine diffeomorphisms, see Proposition \ref{estens}.
We observe furthermore that the left-action of $G$ on $\mathrm{Emb}^{2,\alpha}(M,\overline M)$ commutes
with the right-action of the diffeomorphism group $\Diff(M)$. This implies that one can define a left-action
of $G$ on the quotient space $\mathcal E(M,\overline M)$. Finally, we recall from Proposition~\ref{thm:invdiffeos}
that the right-action of $\Diff(M)$ on $\mathrm{Emb}^{2,\alpha}(M,\overline M)$ is
by affine diffeomorphisms, so that one has an induced connection on $\mathcal E(M,\overline M)$
which is preserved by the action of $G$.

Let $x\colon M\to\overline M$ be a $\mathcal C^{2,\alpha}$ embedding. Since the action of $G$ on $\overline M$ is proper, and $M$ is compact, then the isotropy group $G_x$ is a compact subgroup of $G$. We recall from \cite{AliPic10} that there exists a natural (topological) atlas of  continuously compatible charts of $\mathcal E(M,\overline M)$ such that, in these charts, the (local) action of $G$ is differentiable at the class $[x]$ of every \emph{smooth} embedding $x\colon M\to\overline M$. In particular, if $x$ has constant mean curvature, then the action of $G$ on $\mathcal E(M,\overline M)$ is differentiable at $[x]$. Moreover, by Lemma~\ref{thm:restrlocalaction}, $[x]$ admits arbitrarily small neighborhoods in $\mathcal E(M,\overline M)$ that are invariant by $G_x$. With this, we are in the variational framework described in Axioms (a), (b) and (c) of Section~\ref{sec:slices}.

By assumption (a), there exists a $\mathcal C^1$ function $H\mapsto r(H)$, defined in a neighborhood of $H_{r_*}$, with the property that the (constant) mean curvature of $x_{r(H)}$ is equal to $H$, for all $H$ in this neighborhood. Thus, we may assume that the CMC embeddings $x_r$, for $r$ close to $r_*$, are parameterized by their mean curvature $H_r$ instead of $r$, and we write $x_{H_r}$. Consider an invariant volume functional $\Vol$ defined in a neighborhood $\mathcal U\subset\mathcal C^{2,\alpha}(M,\overline M)$ of $x_{H_{r_*}}$. For $H$ near $H_{r_*}$ consider the one-parameter family of functional $\mathfrak f_H\colon \mathcal U\to\mathds R$ given by \eqref{eq:fh}. The group $G$ acts by affine diffeomorphisms on the manifold $\mathcal C^{2,\alpha}(M,\overline M)$ by left-composition; in particular, we have a local action on the open subset $\mathcal U$. Since both $\Area$ and $\Vol$ are invariant under composition on the right with isometries
of $(\overline M,\overline g)$, then $\mathfrak f_H$ is invariant under the local action of $G$.
Moreover, $\Area$ and $\Vol$ are invariant under right-composition with diffeomorphisms of $M$, so $\mathfrak f_H$ gives a well-defined smooth functional on the quotient space $\mathcal E(M,\overline M)$, as discussed before in Subsection~\ref{subsec:tech}. With the appropriate identifications, the first variation formula for this functional is given by \eqref{eq:firstvar}, which means that the map $\mathfrak d\mathfrak f_H(x)\colon U\subset B_2\to B_0$ defined in \eqref{eq:defdstranof} is
\begin{equation}\label{eq:dstranoCMC}
\mathfrak d\mathfrak f_H(x)=m\big(H-\mathcal H(x)\big)\psi_x,
\end{equation}
where $\psi_x\colon M\to\mathds R^+$ is the unique map satisfying $\psi_x\,\mathrm {vol}_{(x_{H_{r_*}})^*(\overline g)}=\mathrm {vol}_{x^*(\overline g)},$ in particular, $\psi_{x_{H_{r_*}}}\equiv1$.

As mentioned above, if $[x]\in\mathcal E(M,\overline M)$ is a critical point of $\mathfrak f_H$, then the second variation of $\mathfrak f_H$ at $[x]$ is identified with the quadratic form \eqref{eq:secondvar} on $T_{[x]}\mathcal E(M,\overline M)\cong\mathcal C^{2,\alpha}(M)$.
The differential $\mathrm d(\mathfrak d\mathfrak f_H)(x_{r_*})\colon B_2\to B_0$ is the linearization of the mean curvature function, which is precisely the negative Jacobi operator $-J_{x_{H_{r_*}}}$. This is an essentially positive Fredholm operator from $\mathcal C^{2,\alpha}(M)$ to $\mathcal C^{0,\alpha}(M)$, see \cite{Whi, Whi2}.\footnote{Indeed, observe that $-\Delta_{x_{H_r}}$ is a positive isomorphism from $\mathcal C^{2,\alpha}(M)$ to $\mathcal C^{0,\alpha}(M)$.} Thus, assumption (a) of Theorem~\ref{thm:Gbifurcation} is satisfied, see Remark~\ref{thm:fredhassumption}. Assumptions (b1), (b2) and (b3) respectively imply the hypotheses (b),\footnote{The group
$G^0_r$ is \emph{nice} in the sense of \cite{SmoWas} because it is connected.} (c) and (d) of Theorem~\ref{thm:Gbifurcation}.
The claimed result then follows immediately from Theorem~\ref{thm:Gbifurcation}.
\end{proof}

\begin{rem}
Theorem~\ref{thm:MorseCMCbif} uses the assumption that the mean curvature function $r\mapsto H_r$ has non vanishing derivative at the bifurcation instant $r_*$. Such assumption is used in the proof in order to parameterize the trivial branch of CMC immersions through the value of their mean curvature. A natural question is if this assumption is necessary. The following simple examples show that it is indeed necessary, i.e., bifurcation may not occur otherwise.
\end{rem}

\begin{example}\label{exa:2dimexa}
Consider the two-variable function $f(x,y)=4y^3+6xy^2+3xy-3x^2y$ on the plane. We can regard it as a family of functions of $y$, parameterized by $x$. For each fixed $x$, we look at the critical points of the function $y\mapsto f(x,y)$, i.e., we look for the zeroes of the partial derivative $\frac{\partial f}{\partial y}=12y^2+12xy-3x+3x^2$.
Near $(0,0)$, the points $(x,y)$ that solve $\frac{\partial f}{\partial y} =0$ form a smooth curve\footnote{By explicit calculation, the curve is the graph of the function $x=\frac12\left(1-4y-\sqrt{1-8y}\right)$.} contained in the half-plane $x\ge0$, tangent to the $y$ axis at $(0,0)$. Notice that the Implicit Function Theorem cannot be used in this situation, as
$\frac{\partial^2f}{\partial y^2}(0,0)=0$. Observe also that the function $x$ is not locally injective on the points of the curve near $(0,0)$, since for each $x>0$ there are exactly two solutions of $12y^2+12xy-3x+3x^2=0$, one with $y>0$ and another with $y<0$. At all points $(x,y)$ on this curve with $y>0$, the second derivative $\frac{\partial^2f}{\partial y^2}=24y+12x$ is positive, while it is negative at all points $(x,y)$ on the curve with $y<0$. Thus, there is a \emph{jump of the Morse} index at the point $(0,0)$, but there is \emph{no bifurcation}.
\end{example}

\begin{example}\label{ex:spheres}
An explicit counter-example to CMC bifurcation can be given when assumption {\rm (a)} of Theorem~\ref{thm:MorseCMCbif} is not satisfied. Consider the family $[-1,1]\ni r\mapsto x_r$, where $x_r$ is the embedding into $\mathds R^3$ of the spherical cap above the $xy$-plane of the round sphere centered at $(0,0,r)$ of radius $\sqrt{1+r^2}$. These spherical caps have the same boundary, which is the circle $C$ of radius $1$ in the $xy$-plane centered at the origin, see Figure~\ref{fig:threespheres}.
\begin{figure}[htf]
\vspace{-2.5em}
\includegraphics[scale=.35]{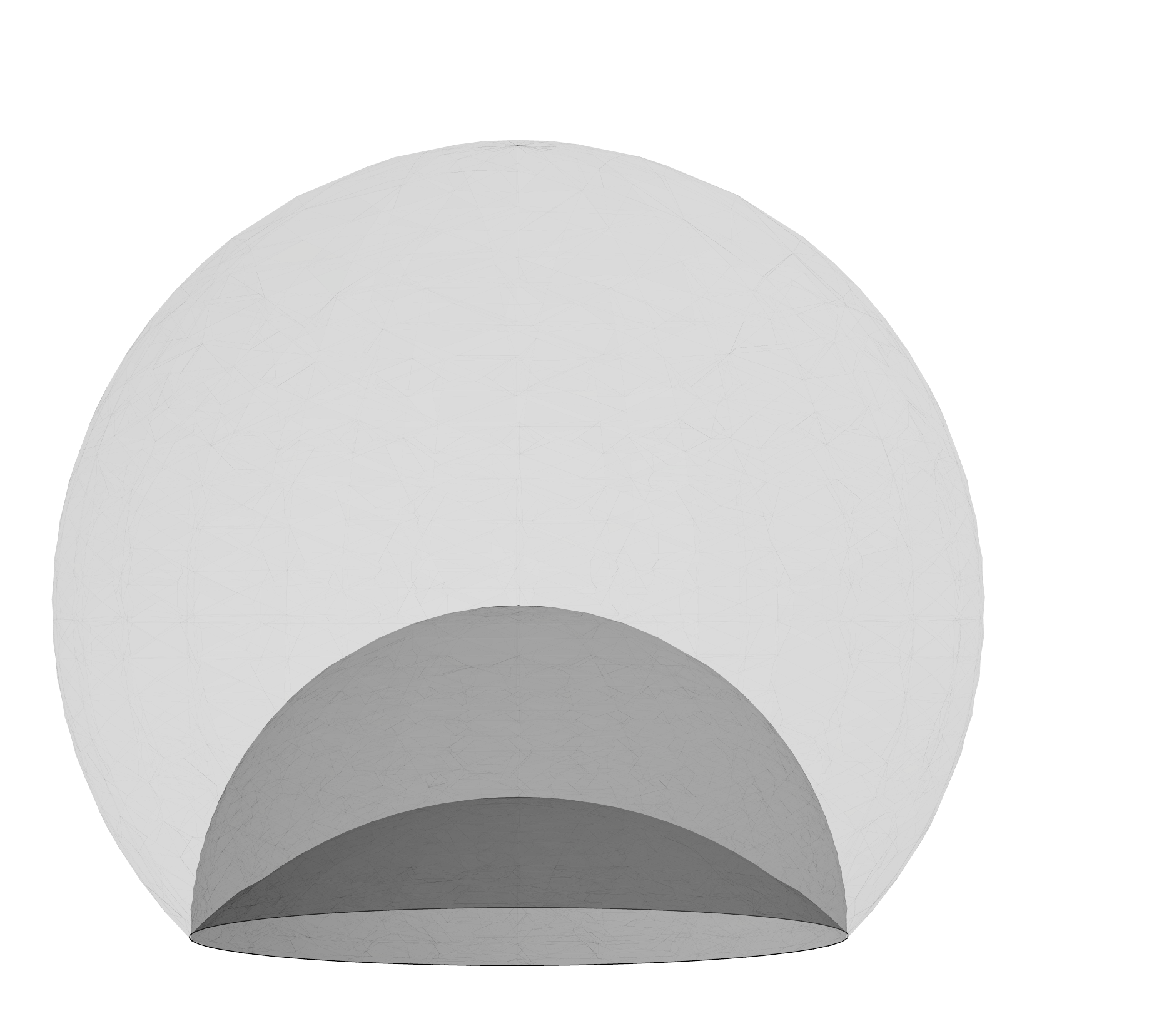}
\caption{Family of spherical caps with the same boundary $C$, the unit circle in the $xy$-plane.}
\label{fig:threespheres}
\end{figure}
Both principal curvatures of $x_r$ are equal to $\frac{1}{\sqrt{1+r^2}}$, hence also its mean curvature is $H_r=\frac{1}{\sqrt{1+r^2}}$. Notice that $H_r$ attains its maximum $H_0=1$ at the half-sphere, hence assumption {\rm (a)} of Theorem~\ref{thm:MorseCMCbif} is \emph{not} satisfied when $r_*=0$.

All other assumptions {\rm (b1)}, {\rm (b2)} and {\rm (b3)} are satisfied. Namely, the only degeneracy instant\footnote{Note that when $r=0$, there exists a Jacobi field $f=\langle K,N_{x_0}\rangle$, where $K=\frac{\partial}{\partial z}$ and $N_{x_0}$ is the unit normal field along $x_0$. This Jacobi field is in $\mathrm{Jac}_{x_0}$ but not in $\mathrm{Jac}^K_{x_0}$, since $K$ is not tangent to the half-sphere (but only to its boundary). Hence, $x_0$ is a \emph{degenerate} CMC embedding.} of $(x_r)_{r}$ is precisely $r_*=0$. A jump of the Morse index can be obtained applying an adequate version of the Morse Index Theorem to $(x_r)_r$. In fact, $\mathrm i_{\mathrm{Morse}}(x_r)$ can be written as the sum of degeneracy instants $s\in [-1,r]$ (counted with multiplicity), and hence is a non-decreasing function of $r$ that jumps as $r$ crosses $r_*=0$.

Finally, bifurcation \emph{does not happen} at $r_*=0$. Since $(x_r)_r$ are embedded in the half-space $z>0$ of $\mathds R^3$ and meet the plane $z=0$ transversely, along the circle $C$, any bifurcating branch would satisfy the same properties for a short time. From the a maximum principle type argument (the Alexander reflection method), any such CMC surfaces must be spherical caps, see~\cite{ebmr}.
\end{example}

\subsection{Equivariant bifurcation using representations}
It is possible to use representation theory to prove a slight generalization of Theorem~\ref{thm:MorseCMCbif}, that gives a subtler criterion for equivariant bifurcation, without necessarily having a jump of the Morse index. As we will see in Subsection~\ref{subsec:rotsym}, this finer result is efficient in geometric applications where the direct computation of the Morse index is not feasible.

As mentioned above, given a transversely oriented CMC embedding $x\colon M\to\overline M$, we identify the tangent space $T_{[x]}\mathcal E(M,\overline M)$ (i.e., the space of normal vector fields along $x$) with the Banach space $\mathcal C^{2,\alpha}(M)$. Under this identification, we may consider the isotropy representation at $x$, induced by the left-composition action of $\Iso(\overline M,\overline g)$, as the representation $\pi\colon G_x\to\mathrm{GL}\big(T_{[x]}\mathcal E(M,\overline M)\big)$ that maps $\psi\in G_x$ to the operator of left-composition with $\dd\psi$, i.e.,
\begin{eqnarray*}
\pi\colon G_x\times T_{[x]}\mathcal E(M,\overline M) &\longrightarrow& T_{[x]}\mathcal E(M,\overline M) \\
(\psi,f)&\longmapsto & \dd\psi\circ f
\end{eqnarray*}
In more elementary terms, $\pi(\psi)$ acts as follows on a normal vector field $f\in\mathcal C^{2,\alpha}(M)$ along $x$. Consider the variation of $x$ induced by $f$, $x_s=\exp^\perp(sfN_x)$, $s\in\, ]-\varepsilon,\varepsilon[$. Then $\pi(\psi)f$ is the normal vector field $\left.\frac{\dd}{\dd s}\psi\circ x_s\right|_{s=0}$ along $x$.

If $f\colon M\to\mathds R$ is an eigenfunction of the Jacobi operator $J_x$, see \eqref{eq:JacobioperatorCMC}, then $\pi(\psi)f=\dd\psi\circ f$ is another eigenfunction with the same eigenvalue, for all $\psi\in\mathrm{Iso}(\overline M,\overline g)$. This means that the isotropy representation $\pi$ of $G_x$ restricts to a representation of $G_x$ on each eigenspace of the Jacobi operator. More precisely, if $\lambda$ is in the spectrum $\sigma(J_x)$ of $J_x$ and $E_x^\lambda$ is the corresponding eigenspace, we let $\pi_x^\lambda\colon G_x\to\mathrm{GL}(E_x^\lambda)$ be the representation defined as the restriction of $\pi$ to $E_x^\lambda$, i.e.,
\[\phantom{,\quad\psi\in G_x,\ f\in E_x^\lambda.}\pi_x^\lambda(\psi)f=\dd\psi\circ f,\quad\psi\in G_x,\ f\in E_x^\lambda.\]
Let us define the \emph{negative isotropy representation} $\pi_x^-$ as the direct sum of all the representations $\pi_x^\lambda$,
as $\lambda$ varies in the set of negative eigenvalues of $J_x$, i.e.,
\begin{equation*}
\pi_x^-=\bigoplus_{\substack{\lambda\in\sigma(J_x) \\ \lambda<0}}\pi_x^\lambda,
\end{equation*}
which is a representation of $G_x$ on $E_x^-=\bigoplus\limits_{\substack{\lambda\in\sigma(J_x) \\ \lambda<0}}E_x^\lambda$, compare to \eqref{eq:negisorep}.

Observe that $\mathrm i_{\mathrm{Morse}}(x)=\mathrm{dim}(E_x^-)$. In Theorem~\ref{thm:MorseCMCbif}, assumption (b3) can be hence written as $\dim(E_{x_{r_*-\varepsilon}}^-)\neq\dim(E_{x_{r_*+\varepsilon}}^-)$. Recall that two representations $\pi_i\colon H\to\mathrm{GL}(V_i)$, $i=1,2$, of $H$ are \emph{equivalent} if there exists an $H$-equivariant isomorphism from $V_1$ to $V_2$, which in particular implies $\dim V_1=\dim V_2$. With this notion, we can weaken (b3) and still obtain bifurcation.

\begin{teo}\label{thm:equivCMCbifurcation}
Replace the assumption {\rm (b3)} of Theorem~\ref{thm:MorseCMCbif} with
\begin{itemize}
\item[(b3')]  the representations $\pi_{x_{r_*-\varepsilon}}^-$ and $\pi_{x_{r_*-\varepsilon}}^-$ of $G_r^0$ are not equivalent.
\end{itemize}
Then, the same conclusion holds, i.e., $r_*$ is a bifurcation instant for the family $(x_r)_r$.
\end{teo}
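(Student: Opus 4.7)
The plan is to repeat verbatim the setup of the proof of Theorem~\ref{thm:MorseCMCbif}, replacing the final appeal to Theorem~\ref{thm:Gbifurcation} by one to Theorem~\ref{thm:Gbifurcation2}. Since hypotheses (a), (b) and (c) of the two abstract theorems are identical, all the verifications carried out there—essentially positive Fredholmness of the linearization of $\mathfrak d\mathfrak f_H$, constancy and niceness of the identity component $G_r^0$, and equivariant nondegeneracy at the endpoints $r=r_*\pm\varepsilon$ via assumption (b1)—transfer unchanged. The only genuinely new ingredient is the translation of the geometric hypothesis (b3') into the abstract non-equivalence hypothesis (d') of Theorem~\ref{thm:Gbifurcation2}.

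For this translation, recall from the proof of Theorem~\ref{thm:MorseCMCbif} that, at a critical point $x=x_{H_r}$, the linearization $\mathrm d(\mathfrak d\mathfrak f_H)(x)\colon B_2\to B_0$ equals $-\psi_x\, J_x$, where $\psi_x$ is the strictly positive density appearing in \eqref{eq:dstranoCMC}. Because $\psi_x$ is defined purely in terms of the induced metric $x^*(\overline g)$, it is $G_x$-invariant, and in particular $G_r^0$-invariant. Therefore multiplication by $\psi_x^{1/2}$ defines a $G_r^0$-equivariant positive isomorphism on $L^2(M,\nu)$, and conjugation by this multiplier intertwines $-\psi_x\,J_x$ with the self-adjoint operator $-\psi_x^{1/2}\,J_x\,\psi_x^{1/2}$. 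The generalized negative eigenspace $N_r$ of \eqref{eq:nlambda} is thus $G_r^0$-equivariantly isomorphic to the unstable Jacobi space $E_x^-=\bigoplus_{\lambda\in\sigma(J_x),\,\lambda<0}E_x^\lambda$, which is the representation space of the geometric $\pi_x^-$. It follows that, up to equivalence of $G_r^0$-representations, the abstract $\pi_r^-$ of \eqref{eq:negisorep} coincides with the geometric $\pi_x^-$ defined before the statement; hence (b3') is exactly (d'), and Theorem~\ref{thm:Gbifurcation2} delivers the bifurcation instant $r_*$.

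The main technical obstacle is thus the bookkeeping of the density factor $\psi_x$ at the endpoints $r_*\pm\varepsilon$, where it is no longer identically $1$. This is handled by the symmetrization argument above, which crucially uses both the positivity and the $G_r^0$-invariance of $\psi_x$ to produce an equivariant intertwiner; the rest of the proof is a direct quotation of the proof of Theorem~\ref{thm:MorseCMCbif}.
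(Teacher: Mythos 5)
Your overall approach matches the paper's, which proves this theorem in a single sentence: repeat the proof of Theorem~\ref{thm:MorseCMCbif} verbatim, invoking Theorem~\ref{thm:Gbifurcation2} in place of Theorem~\ref{thm:Gbifurcation} at the final step. You are right that the one genuinely new point is the translation of the geometric hypothesis (b3') into the abstract hypothesis (d'), i.e., identifying the negative isotropy representation of $G_r^0$ on the generalized negative eigenspace $N_\lambda$ of $\mathrm d(\mathfrak d\mathfrak f_H)(x)=-\psi_x J_x$ with the representation $\pi_x^-$ on $E_x^-=\bigoplus_{\lambda<0}E_x^\lambda$; and you are also right that this is exactly the point where the density factor $\psi_x\ne 1$ at the endpoints must be taken into account. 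This is a detail the paper leaves implicit, and it is a good instinct to address it.

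However, your justification of that step has a gap. You claim that $\psi_x$ is $G_r^0$-invariant ``because $\psi_x$ is defined purely in terms of the induced metric $x^*(\overline g)$.'' This is not so: $\psi_x$ is defined by $\psi_x\,\mathrm{vol}_{x_{r_*}^*(\overline g)}=\mathrm{vol}_{x^*(\overline g)}$, hence it is the ratio of \emph{two} volume forms on $M$, and its transformation under $G_r^0$ depends on how the reference form $\nu=\mathrm{vol}_{x_{r_*}^*(\overline g)}$ behaves under the diffeomorphism $\phi_r$ of $M$ induced by $\psi\in G_r^0$ through $\psi\circ x_r=x_r\circ\phi_r$. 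That $\phi_r$ preserves $\mathrm{vol}_{x_r^*(\overline g)}$ is automatic (it is an isometry of $x_r^*(\overline g)$), but whether it preserves $\nu$ — equivalently, whether $\phi_r=\phi_{r_*}$ — depends on the choice of parameterization of the family $r\mapsto x_r$. Without such a normalization, the invariance of $\psi_x$, hence the $G_r^0$-equivariance of multiplication by $\psi_x^{1/2}$, does not follow, and the intertwining argument as written is incomplete. A parameterization-free fix is available: both $N_\lambda$ and $E_x^-$ are finite-dimensional $G_r^0$-invariant subspaces of maximal dimension on which the $G_r^0$-invariant quadratic form $\dd^2\mathfrak f_H(x)$ is negative definite (the first because $\mathrm d(\mathfrak d\mathfrak f_H)(x)$ commutes with the isotropy action by equivariance of $\mathfrak d\mathfrak f_H$, the second by construction). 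Since $\mathrm{vol}_{x^*(\overline g)}$ \emph{is} $G_r^0$-invariant, the $L^2(\mathrm{vol}_{x^*(\overline g)})$-orthogonal projection of $T_{[x]}\mathcal E(M,\overline M)$ onto $E_x^-$ is $G_r^0$-equivariant; restricted to $N_\lambda$ it is injective (a vector in its kernel would lie in the non-negative part of $\dd^2\mathfrak f_H(x)$ while belonging to the negative-definite $N_\lambda$), and by equality of dimensions it is a $G_r^0$-equivariant isomorphism $N_\lambda\cong E_x^-$. This gives the needed equivalence of $\pi_\lambda^-$ and $\pi_x^-$ without any invariance claim on $\psi_x$.
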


\begin{proof}
The same proof of Theorem~\ref{thm:MorseCMCbif} applies, where instead of Theorem~\ref{thm:Gbifurcation}, we use Theorem~\ref{thm:Gbifurcation2} to obtain the conclusion.
\end{proof}

\begin{rem}\label{thm:remboundary}
Theorems~\ref{thm:MorseCMCbif} and \ref{thm:equivCMCbifurcation} hold \emph{verbatim}
in the case of CMC embeddings of compact manifolds $M$ with boundary.
In this case, a fixed boundary condition is necessary, namely, assume that the embeddings $x_r$
satisfy $x_r(\partial M)=\Sigma$, with $\Sigma$ a fixed codimension $2$ submanifold of $\overline M$.
In this situation, the notion of nondegeneracy requires a suitable modification.
Given a CMC embedding $x\colon M\to\overline M$ satisfying $x(\partial M)=\Sigma$,
the space $\mathrm{Jac}_x$ is the set of Jacobi fields along $x$ that vanish on $\partial M$, and
the space $\mathrm{Jac}^K_x$ is defined to be the vector space spanned by all functions of the form $\overline g(K,N_x)$,
where $K$ is a Killing vector field in $(\overline M,\overline g)$ that is tangent to $x(M)$ along $x(\partial M)$.
Then, $x$ is said to be \emph{nondegenerate} if $\mathrm{Jac}^K_x=\mathrm{Jac}_x$.
\end{rem}

\begin{rem}\label{thm:remvariablegroup}
As we saw in Example~\ref{exa:2dimexa}, assumption (a) cannot be omitted in Theorems~\ref{thm:MorseCMCbif} and \ref{thm:equivCMCbifurcation}. However, it seems reasonable that assumption (b2) may be weakened. Consider the more general case in which the identity connected component $G^0_r$ of the isotropy of $x_r$ is a \emph{continuous family of Lie groups}. This means that the set $\bigcup_{r\in[a,b]}G^0_r$ has the structure of a \emph{topological groupoid} over the base $[a,b]$, with source and range map given by the projection onto $[a,b]$. A notion of continuity (in fact, smoothness)
for families of Lie groups is given in \cite{TasUmeYam}, and a CMC version of an equivariant implicit function theorem in the case of varying isotropies is discussed in \cite{UmeYam}, where the authors prove the existence of non embedded CMC tori in spheres and hyperbolic spaces. Evidently, the validity of an equivariant bifurcation result in the case of varying isotropies would employ a theory of existence of slices for groupoid affine actions, along the lines of the results discussed in Section~\ref{sec:slices}. This is a topic of current research by the authors.
\end{rem}

\subsection{Clifford tori in round and Berger spheres}
Let us discuss some bifurcation results for CMC hypersurfaces by the second named author and others that can be reobtained as an application of Theorem~\ref{thm:MorseCMCbif}.

The family $x_r\colon \mathds S^n\times\mathds S^m\to\mathds S^{n+m+1}$ of CMC Clifford tori in the round sphere, defined by
\begin{equation}\label{eq:CMCClifford}
x_r(x,y)=\left(r\, x,\sqrt{1-r^2}\, y\right), \quad r\in\left]0,1\right[,
\end{equation}
is studied in~\cite{AliPic11}. The central result gives the existence of two sequences $r_n$ and $r_n'$, with $\lim_{n\to\infty}r_n=0$ and $\lim_{n\to\infty}r_n'=1$, of degeneracy instants for the embeddings $x_r$, with bifurcation at each such instant. In the case $n=m=1$, an explicit description of the bifurcating branches is given in \cite{HyndParkMcCuan}; such branches are formed by rotationally symmetric embeddings of $\mathds S^1\times\mathds S^1$ that are analogue to the classical \emph{unduloids} in $\mathds R^3$. The connected component of the identity of the isotropy of every Clifford torus $x_r$ is the group $\mathrm{SO}(n+1)\times\mathrm{SO}(m+1)$, diagonally embedded into the isometry group $\mathrm{SO}(n+m+2)$ of the round sphere $\mathds S^{n+m+1}$. The Jacobi operator of Clifford tori has a simple form, due to the fact that the Ricci curvature of the ambient and also the norm of the second fundamental form are constant functions. Moreover, the induced metric is the standard product metric of $\mathds S^n\times\mathds S^m$. Nondegeneracy and jumps of the Morse index are computed explicitly in this situation using the eigenfunctions of the Laplacian on $\mathds S^n\times\mathds S^m$.

A similar analysis is carried out in \cite{LimLimPic2011}, in the case of embeddings $x_{r,\tau}\colon\mathds S^1\times\mathds S^1\to\mathds S^3_\tau$, as in \eqref{eq:CMCClifford}, into the $3$-dimensional Berger sphere $\mathds S^3_\tau$, with $r\in\left]0,1\right[$ and $\tau>0$.
In analogy with the standard round case (which corresponds to $\tau=1$), also these embeddings are called \emph{Clifford tori}, and
they have constant mean curvature. For $\tau\ne1$, the identity connected component of the isometry group of $\mathds S^3_\tau$ is $\mathrm{SU}(2)$, and the isotropy of every Clifford torus $x_{r,\tau}$ is $\mathds S^1\times\mathds S^1$, diagonally
embedded in $\mathrm{SU}(2)$. The space of Killing-Jacobi fields along $x_{r,\tau}$ has dimension $3$ when $\tau\ne1$, while the dimension is $4$ in the round case $\tau=1$. The induced metric on the torus is flat, but not equal to the product metric.
A spectral analysis of the Jacobi operator, which is the sum of a multiple of the identity and the Laplacian of a flat (but not product)
metric on the torus, is carried out in \cite{LimLimPic2011}, leading to the following bifurcation result.

\begin{teo}\label{thm:mainBerger}
For every $\tau>0$, there exists a countable set $\mathcal A_\tau\subset\left]0,1\right[$ with the following properties:
\begin{itemize}
\item[(a)] $\inf\mathcal A_\tau=0$ and $\sup\mathcal A_\tau=1$;
\item[(b)] for all $r_*\in\mathcal A_\tau$, the family $r\mapsto x_{r,\tau}$ \emph{bifurcates} at $r=r_*$
\end{itemize}
Furthermore, for every $r\in\left]0,1\right[$ there exists a countable
set $\mathcal B_r\subset\left]0,1\right[\bigcup\left]1,+\infty\right[$  with the following properties:
\begin{itemize}
\item[(c)] $\sup\mathcal B_r=+\infty$;
\item[(d)] given $r\in\left]0,1\right[$, for all $\tau_*\in\mathcal B_r$ the family $\tau\mapsto x_{r,\tau}$
bifurcates at $\tau=\tau_*$.
\end{itemize}
\end{teo}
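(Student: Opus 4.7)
The plan is to apply Theorem~\ref{thm:MorseCMCbif} to the $1$-parameter families $r\mapsto x_{r,\tau}$ (for fixed $\tau$) and $\tau\mapsto x_{r,\tau}$ (for fixed $r$), identifying the sets $\mathcal A_\tau$ and $\mathcal B_r$ as (subsets of) the instants at which the Morse index jumps. Since the induced metric $g_{r,\tau}$ on $\mathds S^1\times\mathds S^1$ is flat, and since both the squared norm of the second fundamental form and the ambient Ricci curvature in the normal direction are constant functions on $x_{r,\tau}(\mathds S^1\times\mathds S^1)$, the Jacobi operator takes the particularly simple form
\[
J_{r,\tau}=\Delta_{g_{r,\tau}}+c(r,\tau),
\]
with $c(r,\tau)$ an explicit smooth function of the parameters. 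The spectrum of the Laplacian of a flat torus is given by a dual-lattice spectrum, and one can compute the explicit lattice $\Lambda^*(r,\tau)\subset\mathds R^2$ encoding the period structure of $g_{r,\tau}$; the eigenvalues of $J_{r,\tau}$ are then $4\pi^2\|v\|^2+c(r,\tau)$ as $v$ ranges over $\Lambda^*(r,\tau)$.

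First I would compute the mean curvature $H(r,\tau)$ explicitly and verify that $\partial_r H(r,\tau)\ne 0$ for all $r\in\left]0,1\right[$, and that $\partial_\tau H(r,\tau)\ne 0$ outside a discrete set of values of $\tau$, thereby securing hypothesis (a) of Theorem~\ref{thm:MorseCMCbif} along both families. The constancy of the identity component $G^0_{r,\tau}=\mathds S^1\times\mathds S^1$ of the isotropy (for $\tau\neq 1$) is automatic from the symmetry of the Clifford tori and the Berger metric, yielding (b2). Hypothesis (b1) holds for parameter values outside a discrete set, since the candidate degeneracy instants are the zeros of the countable family of real-analytic functions $(r,\tau)\mapsto 4\pi^2\|v\|^2+c(r,\tau)$, $v\in\Lambda^*(r,\tau)$, and at such zeros one must verify that the associated eigenfunction is not of the form $\overline g(K,N)$ for a Killing field $K$ tangent to the torus --- this is the content of the identification $\dim\mathrm{Jac}^K_{x_{r,\tau}}=3$ for $\tau\ne 1$.

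Finally, to establish accumulation: for part (a), as $r\to 0^+$ or $r\to 1^-$ one of the generators of $\Lambda^*(r,\tau)$ grows without bound while $c(r,\tau)$ remains bounded (or grows more slowly), so that infinitely many eigenvalues cross zero as $r$ varies in any neighborhood of the endpoint, each crossing being transverse and producing a jump in the Morse index. Analogously for part (c), as $\tau\to\infty$ the Berger metric stretches the Hopf fiber and rescales one of the lattice generators, producing infinitely many transverse crossings of eigenvalues through zero. Each such crossing yields $\dim N_{a}\ne\dim N_b$ for suitable $a<b$ close to the crossing, so Theorem~\ref{thm:MorseCMCbif} applies. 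The principal obstacle is the detailed spectral analysis: identifying the explicit lattice $\Lambda^*(r,\tau)$ for the non-product flat metric induced on the torus from the Berger metric, verifying transversality of the eigenvalue crossings, and ruling out cancellations that would prevent a genuine Morse index jump --- in practice this is exactly the technical content carried out in \cite{LimLimPic2011}, but once completed the bifurcation conclusion follows immediately from the abstract machinery developed in Sections~\ref{sec:slices} and \ref{sec:mainbif}.
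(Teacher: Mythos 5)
Your proposal follows the same route as the paper: identify the Jacobi operator as the flat-torus Laplacian shifted by a constant depending on $(r,\tau)$, analyse its spectrum via the dual lattice (this is precisely what \cite{LimLimPic2011} carries out), verify the hypotheses (a), (b1), (b2), (b3) of Theorem~\ref{thm:MorseCMCbif} along each one-parameter family, and conclude bifurcation from the jump of the Morse index at the accumulating degeneracy instants. The paper's own proof is exactly this one-line deferral to Theorem~\ref{thm:MorseCMCbif} together with the spectral analysis of \cite{LimLimPic2011}, so your outline matches in both structure and the key inputs.
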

The above, as well as the bifurcation statement in the case of the round sphere, can be proved as an application of Theorem~\ref{thm:MorseCMCbif}.

\subsection{Rotationally symmetric surfaces in $\mathds R^3$}\label{subsec:rotsym}
Both results discussed above of bifurcation for the families of Clifford tori in round and Berger spheres can be obtained as an application of Theorem~\ref{thm:MorseCMCbif}, given that there is a jump of the Morse index at every degeneracy instant. However, an explicit computation of the Morse index is not feasible in many situations, whereas the weaker assumption of Theorem~\ref{thm:equivCMCbifurcation} on the \emph{jump} of the isotropy representation may actually be an easier task. An example of this situation is provided by rotationally symmetric CMC surfaces in $\mathds R^3$. This problem is studied in detail in~\cite{KoiPalPic2011}.
\begin{figure}[htf]
\includegraphics[scale=.305]{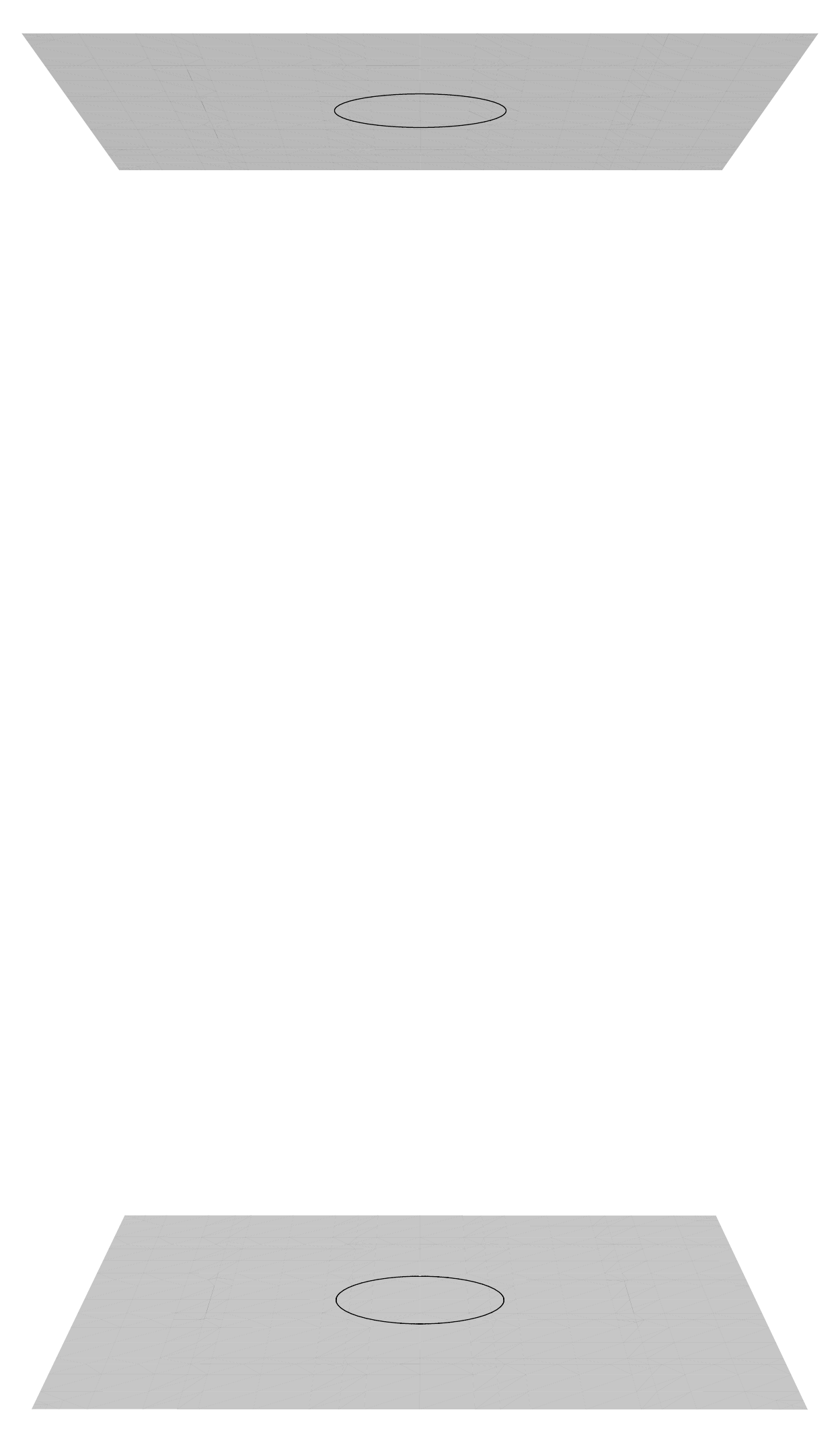} \includegraphics[scale=.3]{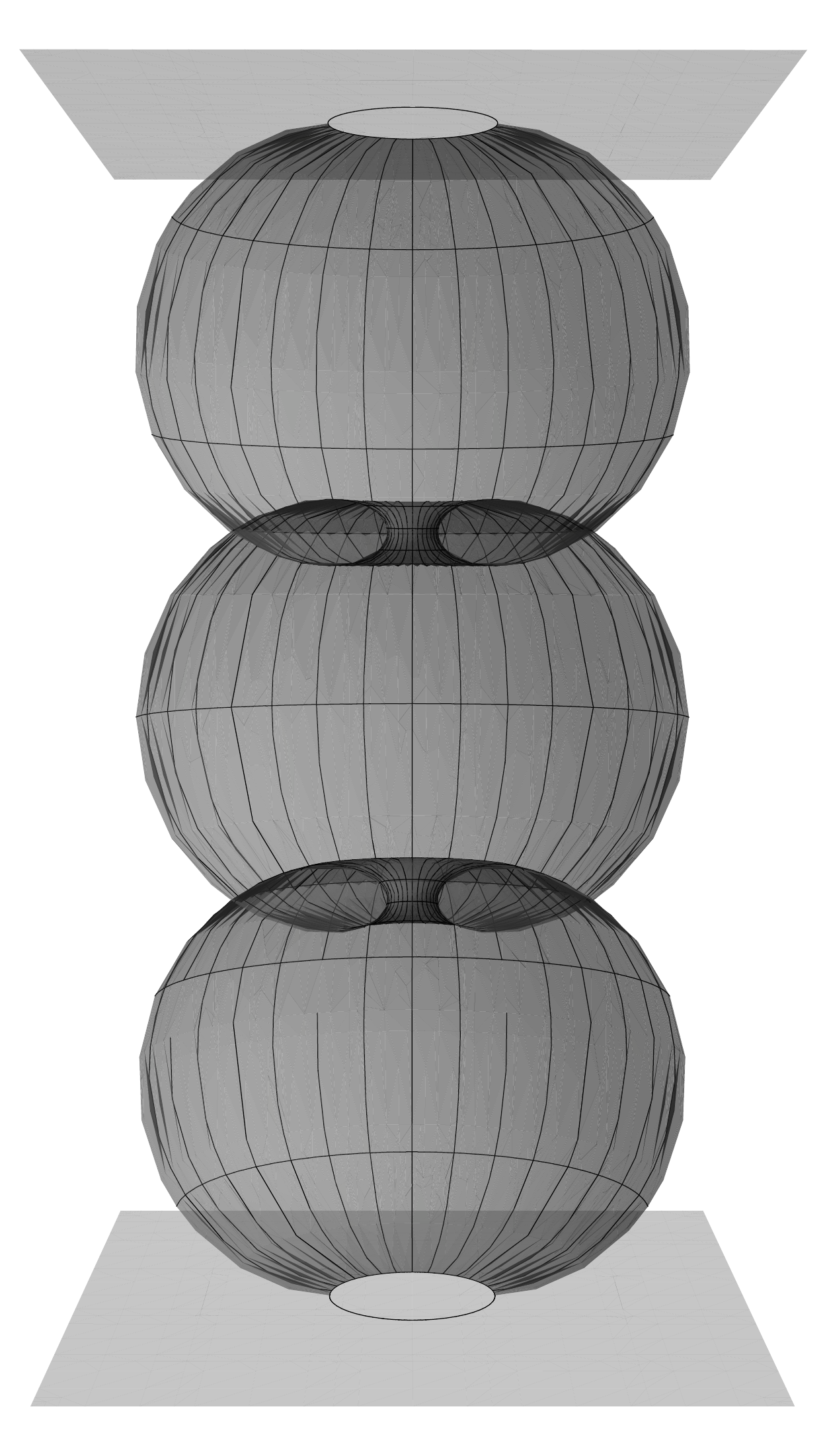} \includegraphics[scale=.3]{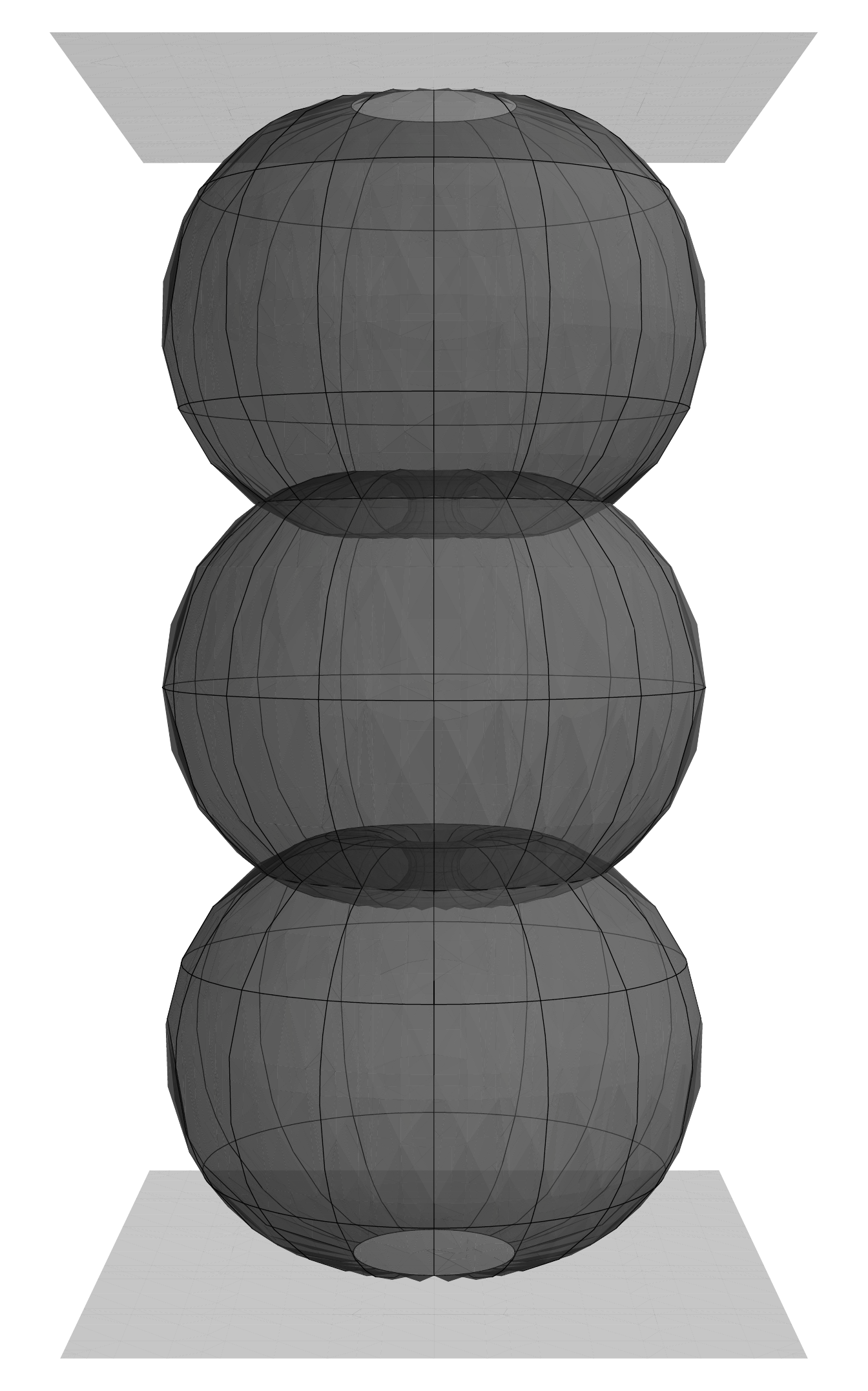}
\caption{The boundary conditions considered for rotationally symmetric CMC surfaces in $\mathds R^3$, and an example of such a surface, a \emph{nodoid} (viewed in half in the second picture and full in the third picture).}
\label{fig:cmcrot}
\end{figure}

For convenience of notation, write $\mathds S^1=[0,2\pi]/\{0,2\pi\}$. Let us consider the case of a family of fixed boundary CMC rotationally symmetric surfaces $\mathrm x_r\colon [0,L_r]\times\mathds S^1\to\mathds R^3$, $r\in I\subset\mathds R$, whose boundary in the union of two co-axial circles lying in parallel planes of type $z=const.$, see Figure~\ref{fig:cmcrot}. Assuming that the rotation axis is the line $x=y=0$, then $\mathrm x_r(s,\theta)$ can be parameterized by
\[x(s)=x_r(s)\cos\theta,\quad y(s)=x_r(s)\sin\theta,\quad z(s)=z_r(s),\]
for some smooth functions $x_r>0$ and $z_r$, where $s\in[0,L_r]$ is the arc-length parameter of the plane curve $\gamma_r(s)=\big(x_r(s),z_r(s)\big)$, and $\theta\in\mathds S^1$. A direct computation gives that the Laplacian of the induced Riemannian metric on the cylinder $M=[0,L_r]\times\mathds S^1$ is
\[\Delta_r=\frac1{x_r}\,\frac\partial{\partial s}\left(x_r\frac\partial{\partial s}\right)+\frac1{x_r^2}\,\frac{\partial^2}{\partial\theta^2},\]
and the square norm of the second fundamental form is
\[\Vert A_{\mathrm x_r}\Vert^2=(\dot x_r\ddot z_r-\ddot x_r\dot z_r)^2+\frac{\dot z_r^2}{x_r^2},\]
where dot represents derivative with respect to $s$. The eigenvalue problem for the Jacobi equation reads
\[J_r(F)=-\Delta_rF-\Vert A_{\mathrm x_r}\Vert^2\,F=\lambda\,F,\quad F(0,\theta)=F(L_r,\theta)\equiv0.\]
Separation of variables $F(s,\theta)=S(s)T(\theta)$ yields the following pair of boundary value problems for ODE's:
\begin{eqnarray*}
&T''+\kappa T=0, & T(0)=T(2\pi),\; T'(0)=T'(2\pi), \\
&-(x_rS')'+\left(\frac\kappa{x_r}-x_r\Vert A_{\mathrm x_r}\Vert^2\right)S=\lambda\, x_rS, & S(0)=S(L_r)=0.
\end{eqnarray*}
The first problem has non trivial solutions when $\kappa=n^2$, $n\in\mathds Z$, $n\geq 0$, with corresponding
eigenfunctions $\cos n\theta$ and $\sin n\theta$; substituting $\kappa=n^2$ in the second problem we get:
\begin{equation}\label{eq:SL}
\left\{\begin{aligned}-&(x_rS')'+\left(\frac{n^2}{x_r}-x_r\Vert A_{\mathrm x_r}\Vert^2\right)S=\lambda x_rS,\\& S(0)=S(L_r)=0.\end{aligned}\right.
\end{equation}
Every (nontrivial) solution $S_{r,n}$ of the Sturm-Liouville system \eqref{eq:SL} produces two (nontrivial) eigenfunctions of the Jacobi operator along the CMC surface $\mathrm x_r$, given by $S_{r,n}\cos n\theta$ and $S_{r,n}\cos n\theta$. The classical Sturm-Liouville theory gives the existence of an unbounded sequence of eigenvalues of \eqref{eq:SL}, and the corresponding eigenfunctions are smooth and form a Hilbert basis of $L^2([0,L_r])$. Every solution of the above system with $n>0$ produces eigenfunctions that are \emph{not} rotationally symmetric. The rotationally symmetric solutions correspond to $n=0$, in which case the Sturm-Liouville equation reads:
\begin{equation}\label{eq:SLn=0}
\left\{\begin{aligned}-&(x_rS')'-x_r\Vert A_{\mathrm x_r}\Vert^2\,S=\lambda x_rS,\\& S(0)=S(L_r)=0.\end{aligned}\right.
\end{equation}

We will say that $r$ is a \emph{conjugate instant} for the Sturm-Liouville problem if \eqref{eq:SL}, has a non trivial solution with $\lambda=0$. Evidently, if $r$ is a conjugate instant, then the CMC embedding $\mathrm x_r$ is degenerate. In this setting, Theorem~\ref{thm:equivCMCbifurcation} can be applied to obtain the following bifurcation result.

\begin{teo}\label{thm:bifrotCMCsurfaces}
Consider the family $r\mapsto\mathrm x_r$ of rotationally symmetric CMC surfaces in $\mathds R^3$ having fixed boundary described above. For every fixed $n>0$, let $r_n$ be the first conjugate instant of the Sturm-Liouville equation \eqref{eq:SL}. Assume that $r_n$ is an isolated degeneracy instant for the family $\mathrm x_r$, and that the derivative of the mean curvature function $H'_{r_n}$ is not zero. Then, $r_n$ is a bifurcation instant for the family of CMC surfaces $(\mathrm x_r)_r$. Moreover, if $r_n$ is not a conjugate instant also for the Sturm-Liouville problem \eqref{eq:SLn=0}, then \emph{break of symmetry} occurs at the bifurcating branch, i.e., the bifurcating branch consists of fixed boundary CMC surfaces that are \emph{not} rotationally symmetric.
\end{teo}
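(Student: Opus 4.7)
The plan is to apply Theorem~\ref{thm:equivCMCbifurcation} to the family $(\mathrm x_r)_r$, using the Fourier decomposition in the angular variable $\theta$ to read off the negative isotropy representation from the Sturm-Liouville spectra. I take the symmetry group $G$ to be the identity component of $\Iso(\mathds R^3)$, so that the identity component $G^0_r$ of the isotropy of $\mathrm x_r$ is the circle $\mathrm{SO}(2)$ of rotations about the axis $x=y=0$; this $G^0_r$ is manifestly independent of $r$, which gives hypothesis (b2), while hypothesis (a) is given. For the nondegeneracy hypothesis (b1), I first check that, in the fixed-boundary sense of Remark~\ref{thm:remboundary}, the only Killing field of $\mathds R^3$ tangent to $\mathrm x_r(M)$ along $\mathrm x_r(\partial M)$ is the rotation generator $\partial_\theta$; since $\partial_\theta$ is everywhere tangent to the rotationally symmetric $\mathrm x_r(M)$, one has $\overline g(\partial_\theta, N_{\mathrm x_r}) \equiv 0$, so $\mathrm{Jac}^K_{\mathrm x_r} = \{0\}$. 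Consequently, nondegeneracy reduces to trivial Dirichlet kernel of $J_r$, which by the isolation assumption on $r_n$ holds for $0 < |r - r_n| < \varepsilon$.

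The core step is to verify the representation-theoretic hypothesis (b3'). Using the $\mathrm{SO}(2)$-equivariant orthogonal decomposition $\mathcal C^{2,\alpha}([0, L_r] \times \mathds S^1) = \bigoplus_{m \geq 0} V_m$, where $V_0$ is the trivial $\mathrm{SO}(2)$-module of $\theta$-independent functions and $V_m$, for $m \geq 1$, is the real $2$-dimensional weight-$m$ irreducible spanned by $S(s)\cos m\theta$ and $S(s)\sin m\theta$, the equivariant operator $J_r$ preserves each $V_m$ and restricts there to the Sturm-Liouville operator \eqref{eq:SL}. Hence the multiplicity of the weight-$m$ irreducible in $\pi^-_r$ equals the number of negative eigenvalues of the mode-$m$ Sturm-Liouville problem. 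By hypothesis $r_n$ is the first conjugate instant for mode $n$, and by isolation it is the only degeneracy of the family in a neighborhood; standard Sturm-Liouville continuity then implies that the smallest mode-$n$ eigenvalue crosses zero as $r$ passes through $r_n$, so the multiplicity of the weight-$n$ irreducible in $\pi^-_r$ jumps by $\pm 1$. Since distinct weights yield pairwise inequivalent $\mathrm{SO}(2)$-irreducibles, $\pi^-_{r_n - \varepsilon}$ and $\pi^-_{r_n + \varepsilon}$ are not equivalent, and Theorem~\ref{thm:equivCMCbifurcation} produces the bifurcation at $r_n$.

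For the break of symmetry, I argue by contradiction: suppose some bifurcating branch consists of rotationally symmetric CMC surfaces converging to $\mathrm x_{r_n}$. Because the slice of Theorem~\ref{thm:existenceofslicecompact} is constructed via the $\mathrm{SO}(2)$-invariant natural connection and its exponential, the $\mathrm{SO}(2)$-fixed subset of the slice at $[\mathrm x_{r_n}]$ corresponds precisely to $\mathrm{SO}(2)$-invariant normal perturbations, i.e., to functions lying in $V_0$. The restriction of the CMC equation $\mathcal H(x) = H$ to this fixed subset is a smooth map whose partial derivative at $(\mathrm x_{r_n}, H_{r_n})$ with respect to the normal perturbation is the restriction of $J_{r_n}$ to $V_0$, namely the mode-$0$ Sturm-Liouville operator \eqref{eq:SLn=0}, which is an isomorphism on Dirichlet data by the hypothesis that $r_n$ is not a mode-$0$ conjugate instant. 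The ordinary implicit function theorem then produces a unique rotationally symmetric CMC branch through $(\mathrm x_{r_n}, H_{r_n})$, which can only be $(\mathrm x_r)_r$ itself, contradicting the existence of a distinct rotationally symmetric bifurcating branch. I expect the most delicate step to be the representation-theoretic analysis in the second paragraph, specifically justifying that at the first conjugate instant the mode-$n$ eigenvalue genuinely \emph{crosses} zero rather than merely touching it, and that simultaneous sign changes in other modes at $r_n$ cannot cancel the jump in the weight-$n$ multiplicity; the isolation hypothesis together with standard Sturm-Liouville comparison arguments should provide what is required.
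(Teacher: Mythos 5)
Your proposal is correct and follows essentially the same route as the paper: apply Theorem~\ref{thm:equivCMCbifurcation} with $G^0_r=\mathrm{SO}(2)$, detect the inequivalence of $\pi^-_{r_n-\varepsilon}$ and $\pi^-_{r_n+\varepsilon}$ by tracking the multiplicity of the weight-$n$ isotypic component (which goes from $0$ to at least $1$ across the first mode-$n$ conjugate instant), and deduce symmetry breaking from nondegeneracy of the $n=0$ Sturm--Liouville problem via the implicit function theorem on the rotationally symmetric part of the slice. Your closing worry about other modes ``cancelling'' the jump is unnecessary: you only compare the weight-$n$ multiplicities, which suffice for inequivalence regardless of what the remaining isotypic components do; the crossing (rather than touching) of the mode-$n$ eigenvalue is indeed the point also implicitly used in the paper and is where the hypotheses on isolation and $H'_{r_n}\neq 0$ enter, just as you anticipate.
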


\begin{proof}
Theorem~\ref{thm:equivCMCbifurcation} applies here in the following setup. The (identity connected component of the) isotropy of $\mathrm x_r$ is\footnote{Namely, the subgroup of isometries of $\mathds R^3$ that preserve two co-axial circles lying in parallel planes is generated by the group of rotation around the axis, and, if the two circles have same radius, by the reflection about the plane in the middle of the two parallel planes.}
the group of rotations around the $z$ axis. Assumptions~(a), (b1) and (b2) of Theorem~\ref{thm:MorseCMCbif} hold at the instant $r_n$ under our hypotheses. Assumption~(b3') of Theorem~\ref{thm:equivCMCbifurcation} holds at the first instant at which \eqref{eq:SL} admits a non trivial solution. Namely, for $r<r_n$, the negative isotropy representation $\pi^-_{\mathrm x_r}$ of the group of rotations has no vector whose isotropy is isomorphic to $\mathds Z_{n}$. On the other hand, for $r>r_n$, with $r-r_n$ sufficiently small, the two Jacobi fields determined by the non trivial solution of \eqref{eq:SL} belong to the negative eigenspace of $J_{r}$, and they have isotropy isomorphic to $\mathds Z_{n}$. This implies that for $\varepsilon>0$ small enough, the representations $\pi^-_{\mathrm x_{r_n-\varepsilon}}$ and $\pi^-_{\mathrm x_{r_n+\varepsilon}}$ are not equivalent. Thus, from Theorem~\ref{thm:equivCMCbifurcation}, bifurcation occurs at $r_n$. As to the break of symmetry, we observe that if $r_n$ is not a conjugate instant for \eqref{eq:SLn=0}, then the symmetrized CMC variational problem is nondegenerate at $r_n$, and bifurcation by rotationally symmetric CMC embeddings cannot occur.
\end{proof}

We observe that, under the assumptions of Theorem~\ref{thm:bifrotCMCsurfaces}, jump of the Morse index may \emph{not} occur at $r_n$. An example where the above result applies is provided by families of fixed boundary \emph{nodoids}, see~\cite{KoiPalPic2011} and Figure~\ref{fig:cmcrot}.
\end{section}

\appendix
\begin{section}{Nonlinear formulation of the bifurcation result}
\label{app:A}

We consider a variational setup similar to that of Section~\ref{sec:slices}, namely $\mathcal M$ is a smooth Banach manifold, $G$ is a Lie group acting continuously by diffeomorphisms on $\mathcal M$ (recall the auxiliary maps \eqref{eq:defbetax}), and we also have
\begin{itemize}
\item[(a)] $\mathcal E\to\mathcal M$ is a Banach vector bundle over $\mathcal M$;
\item[(b)] $[a,b]\ni\lambda\mapsto T_\lambda\in\Gamma(\mathcal E)$ is a continuous path of $G$-equivariant sections;\item[(c)] the action of $G$ on $\mathcal M$ \emph{lifts} to an action of $G$ on $\mathcal E$, which is linear on the fibers;
\item[(d)] $[a,b]\ni\lambda\mapsto x_\lambda\in\mathcal M$ is a continuous path such that $T_\lambda(x_\lambda)=0$, for all $\lambda$.
\end{itemize}

Analogously to Definition~\ref{def:eqbif}, an instant $\lambda_*\in[a,b]$ is an \emph{equivariant bifurcation instant for the family $(T_\lambda,x_\lambda)_{\lambda\in[a,b]}$} if there is a sequence $(x_n,\lambda_n)\in\mathcal M\times [a,b]$ satisfying \eqref{itm:eqbif1}, \eqref{itm:eqbif3} and $T_{\lambda_n}(x_n)=0$, for all $n$, which corresponds to \eqref{itm:eqbif2}. In order to give an existence result for an equivariant bifurcation instant, let us consider the following auxiliary\footnote{Compare with the structure employed in \cite[Section 3]{BPS1}.} structure
\begin{itemize}
\item[(e)] $\mathfrak i\colon T\mathcal M\to\mathcal E$ is a $G$-equivariant continuous inclusion (i.e., an injective morphism of vector bundles) with dense image;
\item[(f)] $\langle\cdot,\cdot\rangle$ is a $G$-invariant continuous (but not necessarily complete) positive-definite inner
product in the fibers of $\mathcal E$;
\item[(g)] $\mathfrak j_\lambda\colon\mathcal E_{x_\lambda}\to T_{x_\lambda}\mathcal M^*$ is the map $\mathfrak j_\lambda(e)v=\langle e,\mathfrak i(v)\rangle$, and the composition $\mathfrak j_\lambda\circ(\mathrm d^\mathrm{ver}T_\lambda)(x_\lambda)\colon T_{x_\lambda}\mathcal M\to T_{x_\lambda}\mathcal M^*$ is symmetric for all $\lambda$.
\end{itemize}

For all $\lambda\in[a,b]$ and all $\eta\ge0$, set
\[N_{\lambda,\eta}:=\mathrm{span}\big\{v\in T_{x_\lambda}\mathcal M:\mathrm d^{\mathrm{ver}}T_\lambda(x_\lambda)v=\mu\,\mathfrak i(v),\ \mu\leq\eta\big\},\]
and $N_\lambda:=N_{\lambda,0}$, compare with \eqref{eq:nlambdae} and \eqref{eq:nlambda}. If $G_\lambda\subset G$ is the isotropy of $x_\lambda$, we have the isotropy representation $G_\lambda\ni g\mapsto\mathrm d\phi_g(x_\lambda)\in\mathrm{GL}(T_{x_\lambda}\mathcal M)$. For all $\eta\ge0$, the space $N_{\lambda,\eta}$ is invariant by this action. Denote by $\pi_\lambda^-\colon G_\lambda\to\mathrm{GL}(N_\lambda)$ the restriction of such representation, which is called the \emph{negative isotropy representation} of $G_\lambda$ (compare with \eqref{eq:negisorep}).

We can now state the nonlinear formulation of the celebrated result of J. Smoller and A. Wasserman~\cite[Thm 3.3]{SmoWas}, whose proof follows its linear version, using the above auxiliary structure.

\begin{prop}\label{thm:nonlinSmWas}
In the above setup, assume that
\begin{itemize}
\item[(a)] there exists $\varepsilon>0$ such that $\mathrm{dim}(N_{\lambda,\varepsilon})<+\infty$, for all $\lambda\in[a,b]$;
\item[(b)] for all $\lambda$, $G_\lambda=G$;
\item[(c)] $\mathrm d^\mathrm{ver}T_a(x_a)\colon T_{x_a}\mathcal M\to\mathcal E_{x_a}$ and $\mathrm d^\mathrm{ver}T_b(x_b)\colon T_{x_b}\mathcal M\to\mathcal E_{x_b}$ are isomorphisms;
\item[(d)] the negative isotropy representations $\pi_a^-$ and $\pi_b^-$ are not equivalent.
\end{itemize}
Then, there is an equivariant bifurcation instant in $\left]a,b\right[$ for the family $(T_\lambda,x_\lambda)_{\lambda}$.
\end{prop}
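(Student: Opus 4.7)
The plan is to reduce the abstract problem to the classical finite-dimensional Smoller--Wasserman theorem via an equivariant Lyapunov--Schmidt procedure, exploiting the auxiliary Hilbert structure (e)--(g).

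First, I would trivialize everything along the path $x_\lambda$. Since $G_\lambda=G$ by assumption (b), each $x_\lambda$ is a $G$-fixed point, and thus the isotropy representation $g\mapsto\mathrm d\phi_g(x_\lambda)$ makes $T_{x_\lambda}\mathcal M$ into a continuous $G$-representation; by the linearity of the lifted $G$-action on $\mathcal E$ (assumption (c)), the same is true of $\mathcal E_{x_\lambda}$. Passing to local coordinates at $x_a$ (using, e.g., the exponential of a $G$-invariant connection constructed as in Section~\ref{sec:connection}) and trivializing the bundle $\mathcal E$ along $x_\lambda$ by $G$-equivariant parallel transport, I can rewrite the equation $T_\lambda(x)=0$ as a zero-finding problem $F_\lambda(u)=0$ for a continuous $G$-equivariant family of maps $F_\lambda\colon U\subset X\to Y$ on open neighborhoods of the origin of fixed Banach spaces $X$ and $Y$, with $F_\lambda(0)=0$. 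The data $\mathfrak i$, $\langle\cdot,\cdot\rangle$, and the symmetry (g) transport to this local picture, so that $L_\lambda:=\mathrm dF_\lambda(0)$ is symmetric with respect to the pulled-back inner product on $X$.

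Second, I would set up a $G$-equivariant splitting of $X$. By assumption (a), the sum of (generalized) eigenspaces of $L_\lambda$ corresponding to eigenvalues $\le\varepsilon$ has a dimension that is uniformly bounded on $[a,b]$, and by the usual spectral semi-continuity for symmetric Fredholm-type perturbations, one finds a single finite-dimensional subspace $V\subset X$ containing each $N_{\lambda,\varepsilon/2}$. Using Lemma~\ref{thm:preparatory}(a) applied to the compact isotropy $G=G_\lambda$ (or to its closure if necessary), $V$ can be chosen $G$-invariant, with a $G$-invariant closed complement $W$ on which $L_\lambda$ is uniformly invertible for all $\lambda$ near any fixed reference point. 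Shrinking $U$ if necessary, the implicit function theorem (applied equivariantly, since the projections are $G$-equivariant) solves $P_W F_\lambda(v+w)=0$ uniquely in the form $w=\psi(\lambda,v)$, producing a $G$-equivariant $\mathcal C^{k-1}$-map $\psi$ on a neighborhood of $0$ in $V\times[a,b]$ with $\psi(\lambda,0)=0$.

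Third, I would pass to the reduced finite-dimensional equation $\Phi_\lambda(v):=P_V F_\lambda\big(v+\psi(\lambda,v)\big)=0$ on a neighborhood of $0$ in $V$. The family $\Phi_\lambda$ is $G$-equivariant on the finite-dimensional $G$-module $V$, vanishes at the origin for every $\lambda$, and its linearization $\mathrm d\Phi_\lambda(0)$ coincides (up to a $G$-equivariant isomorphism produced by $\psi$) with the restriction of $L_\lambda$ to $V$; consequently the negative eigenspace of $\mathrm d\Phi_\lambda(0)$ equals $N_\lambda$ and carries exactly the representation $\pi_\lambda^-$. Assumption (c) gives that $\mathrm d\Phi_a(0)$ and $\mathrm d\Phi_b(0)$ are isomorphisms of $V$, and (d) gives that $\pi_a^-$ and $\pi_b^-$ are inequivalent. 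The classical Smoller--Wasserman theorem~\cite[Thm~3.3]{SmoWas}, applied to the $G$-equivariant finite-dimensional family $(\Phi_\lambda)$, yields a bifurcation instant $\lambda_*\in\,]a,b[\,$: a sequence $(v_n,\lambda_n)\to(0,\lambda_*)$ with $\Phi_{\lambda_n}(v_n)=0$ and $v_n\notin G\cdot 0=\{0\}$. Reconstructing $u_n=v_n+\psi(\lambda_n,v_n)$ and mapping back to $\mathcal M$ yields the required equivariantly bifurcating sequence for $(T_\lambda,x_\lambda)$.

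The principal obstacle is the \emph{uniform} equivariant Lyapunov--Schmidt step: producing a single $G$-invariant finite-dimensional $V\subset X$ with a $G$-invariant closed complement on which $L_\lambda$ is invertible uniformly in $\lambda\in[a,b]$. This combines the finite-dimensionality in (a) with a compactness argument on $[a,b]$, the symmetry (g) (which provides the spectral decomposition), and the invariant-projector construction of Lemma~\ref{thm:preparatory}(a); once this splitting is in place, everything else is a routine equivariant reduction to the classical theorem.
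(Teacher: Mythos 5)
The paper does not actually spell out a proof of this proposition: immediately before the statement it simply remarks that ``the proof follows its linear version, using the above auxiliary structure,'' i.e., that one is to adapt the argument of \cite[Thm~3.3]{SmoWas} using the Hilbert/inclusion data (e)--(g). Your equivariant Lyapunov--Schmidt reduction, followed by an application of the classical Smoller--Wasserman theorem on the reduced finite-dimensional $G$-module, is a legitimate realization of that one-line hint, and the ingredients you invoke (the inner product and symmetry from (f)--(g) to produce the spectral decomposition, Lemma~\ref{thm:preparatory}(a) to obtain $G$-invariant complements, compactness of $[a,b]$ for uniformity) are the correct ones.

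Two points deserve a remark. First, the hypotheses of the proposition do not explicitly say that $G$ is compact and nice; these are tacitly carried over from the setup in which the proposition is used (Theorems~\ref{thm:Gbifurcation} and~\ref{thm:Gbifurcation2}, after restricting to a slice, where the acting group is the compact nice isotropy $G_0$). You correctly treat this as implicit, and without it neither Lemma~\ref{thm:preparatory} nor the conclusion of \cite[Thm~3.3]{SmoWas} would be available. Second, the assertion that ``one finds a single finite-dimensional subspace $V\subset X$ containing each $N_{\lambda,\varepsilon/2}$'' is slightly too glib as stated: the eigenspaces rotate with $\lambda$, so a single $V$ containing them all is not automatic. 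The standard fix is either to cover $[a,b]$ by finitely many intervals on which the spectral projection below $\varepsilon$ varies by less than $1$ in norm (hence has constant finite rank, and their ranges sum to a finite-dimensional $V$), or to conjugate by a continuous family of isomorphisms (Kato's trick) so as to work with a fixed $V$ and a $\lambda$-dependent inner product. You gesture at precisely this in your closing paragraph, and with that clarification the argument is sound. It is worth noting that the Smoller--Wasserman proof itself does not require a global reduction over all of $[a,b]$: one can instead argue by contradiction (no bifurcation $\Rightarrow$ the trivial branch is isolated), apply the equivariant Conley index continuation to compare the indices at $a$ and $b$, and use niceness to deduce equivalence of $\pi_a^-$ and $\pi_b^-$; this route avoids the uniformity issue entirely and is likely closer to what the authors mean by ``follows its linear version.''
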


\end{section}


\end{document}